\documentclass{article}

\usepackage[english]{babel}
\usepackage[T1]{fontenc}
\usepackage[english]{babel}
\usepackage{hyperref}
\usepackage{authblk}

\usepackage[a4paper,top=2cm,bottom=2cm,left=3cm,right=3cm,marginparwidth=1.75cm]{geometry}
\usepackage{enumitem}
\setlist[itemize]{label=--}

\usepackage{amsmath,amsfonts,amssymb,mathtools,stmaryrd,amsthm, esint}
\usepackage{mathabx}
\usepackage{relsize}
\usepackage{bbold}

\newtheorem{theorem}{Theorem}[section]

\newtheorem{lemma}[theorem]{Lemma}
\newtheorem{proposition}[theorem]{Proposition}

\newtheorem{definition}[theorem]{Definition}
\theoremstyle{definition}
\newtheorem{example}[theorem]{Example}

\theoremstyle{remark}
\newtheorem{remark}[theorem]{Remark}

\usepackage{tabularx}
\usepackage{nicematrix}
\usepackage{arydshln}

\usepackage{graphicx,float}

\usepackage{tikz}
\usetikzlibrary{patterns}
\usepackage{minted}
\usetikzlibrary{shapes,decorations,arrows,calc,arrows.meta,fit,positioning}

 \usepackage{csquotes}

\title{Multilinear Mikhlin Multipliers with Degenerate Singularities}
\author{Hana\"e Vandanjon \\
\small Nantes Universit\'e, Laboratoire de Math\'ematiques Jean Leray, LMJL, UMR 6629, F-44000 Nantes, France  \\
\small {\em E-mail address:} {hanae.vandanjon@univ-nantes.fr} }
\date{\today}

\begin{document}
\maketitle
\begin{abstract}
We prove $L^p$ bounds for multilinear multipliers whose symbols are singular along degenerate subspaces. When the singularity has a sufficiently mild degeneracy, we show that the wavepacket analysis of the nondegenerate theory can be adapted to this setting. The admissible range of exponents depends on the geometry of the singularity.
\\
{\bf Key words} : multilinear operators, wavepacket analysis. \\
{\bf MSC classification} : 42B15, 42B20.
\end{abstract}
\section{Introduction}
Let $n \geq 2$ and let $m$ be a function defined on the hyperplane 
\[ \Gamma:= \{\xi \in \mathbb{R}^n \; |\; \xi_1+\cdots+\xi_n=0 \}. \]
We consider the multilinear operator $T_m$ associated with the symbol $m$:
\[ T_m(f_1,\dots, f_{n-1})(x):= \int_{\mathbb{R}^{n-1}}m(\xi)e^{2i\pi x(\xi_1+\cdots+\xi_{n-1})} \prod_{j=1}^{n-1}\hat{f}_j(\xi_j) \; d\xi_1\cdots d \xi_{n-1},\]
where $\xi=(\xi_1, \dots, \xi_{n-1}, -(\xi_1 + \cdots + \xi_{n-1}))\in \Gamma$.
\\ It was shown in \cite{MuscaluTaoThiele2002} that the operator $T_m$ extends to a bounded operator 
\[T_m: L^{p_1} \times \cdots \times L^{p_{n-1}} \rightarrow L^{p_n'}\]
provided that: 
\begin{enumerate}
\item $m$ is a symbol singular along a nondegenerate subvector space $\Gamma' \subset \Gamma$, of dimension $k$ with 
\[ k< \frac{n}{2},\] and  satisfies the following Mikhlin estimates 
\begin{equation}\label{Mikhlin}
    |\partial^\alpha m(\xi)| \lesssim \left(\text{dist}(\xi, \Gamma') \right)^{-|\alpha|}
\end{equation}
for enough partial derivatives. 
    \item For any $j=1, \dots, n-1$
\begin{equation}\label{eq:range}
    1<p_j\leq \infty,
    \end{equation}
\begin{equation}\label{eq:Holder}
    \frac{1}{p_1}+\cdots +\frac{1}{p_n}=1,\end{equation}
\begin{equation} \label{eq:droites}
    \frac{1}{p_{i_1}} +\cdots +\frac{1}{p_{i_r}} < \frac{n-2k+r}{2},
\end{equation}
for any $1\leq i_1< \cdots < i_r \leq n$ and $r=1, \dots, n$.
\end{enumerate}
Here, $\Gamma'$ nondegenerate means that it is a graph over every choice of $k$ variables among $\xi_1, \dots, \xi_n$. Notice that since there is no condition \eqref{eq:range} on the last exponent $p_n$, its dual exponent $p_n'=\frac{p_n}{p_n-1}$ can be lower than $1$. 
\\ This theorem covers the case of the nondegenerate bilinear Hilbert transform, previously studied by Lacey and Thiele in \cite{LaceyThiele1997}, \cite{LaceyThiele1999}, for $\alpha\neq 0, 1$
\[ BHT_{1, \alpha}(f,g)(x)=p.v. \int_\mathbb{R}f(x+ t) g(x+ \alpha t)\frac{dt}{t}\]
which can be rewritten as an operator $T_m$ with symbol 
\[m(\xi)=i\pi \text{sgn}( \xi_1+\alpha \xi_2).\]
The symbol $m$ is singular along $\Gamma'_{1,\alpha}=\{\xi \in \mathbb{R}^3\;|\;  \xi_1 = -\alpha \xi_2 \}$ which is degenerate whenever $\alpha=0$ or $\alpha=1$. When $\Gamma'_{1,\alpha}$ is degenerate, the bilinear Hilbert transform reduces, up to a constant, either to a product involving the Hilbert transform or to the Hilbert transform of a product. In consequence, boundedness follows directly from Hölder's inequality together with the $L^p$ boundedness of the Hilbert transform. 
The estimates obtained by using the approach from \cite{LaceyThiele1997}, \cite{LaceyThiele1999} blow up as $\alpha$ approaches the degenerate values and they don’t include the degenerate cases. A related question is whether these estimates can be made uniform in $\alpha$. This was first achieved for weak type estimates by \cite{Thiele2002}, in the local $L^2$ range by \cite{GrafakosLi2004}, and later extended outside of the local $L^2$ range in \cite{Li2006} and to the full known range of boundedness by \cite{UralstevWarchalski2022}.
\\The result of \cite{MuscaluTaoThiele2002} was extended in \cite{DemeterPramanikThiele2010} to operators defined on $\mathbb{R}^d$, in the local $L^2$ range. In this setting, the symbol $m$ is defined on $(\mathbb{R}^d)^n$ and is singular along a $k$-dimensional subspace $\Gamma'\subset(\mathbb{R}^d)^n$. 
A new difficulty arises for operators of fractional rank, that is, when $\frac{k}{d}$ is not an integer, since $\frac{k}{d}$ can be arbitrarily close to the strict limiting value $\frac{n}{2}$. As in the one-dimensional setting, the singularity $\Gamma'\subset(\mathbb{R}^d)^n$ is assumed to be a graph over every choice of $k$ canonical variables, together with an additional hypothesis accounting for the ambient dimension $d$ and the possibility of fractional rank.
In higher dimensions $d$, when $\Gamma'$ fails to be a graph over some choices of canonical variables, the resulting degeneracies seem to have different degrees of complexity.  This phenomenon appears in particular in the study of the two-dimensional bilinear Hilbert transform in \cite{DemeterThiele2010}.
\\Notice that the Mikhlin estimate \eqref{Mikhlin} is invariant under translation in the direction of $\Gamma'$, which can be seen as a modulation invariance property on the class of operators $T_m$. Together with the translation invariance of $T_m$, this suggests the use of tools adapted to both the space and frequency symmetries, namely a wave packet decomposition. Indeed, in \cite{MuscaluTaoThiele2002}, \cite{LaceyThiele1997}, it was shown that it is enough to study the following discretized model:
\begin{equation} \label{eq:opmodel}
    T_{\mathbb{P}}(f_1, \dots, f_n)(x):= \sum_{P\in \mathbb{P}}|I_P|^{1-n/2}\left(\prod_{j=1}^{n-1}\left<f_j, \phi_{P,j}^j\right> \right)\phi_{P,n}^n(x) \end{equation}
where $\mathbb{P}$ is a finite collection of multi-tiles (multi-rectangles of area 1) $P=P_1 \times \cdots \times P_n$, with a special rank property implied from the nondegenerate assumption. This rank property encodes the geometric relations between the tiles and is the key ingredient to organize the summation over $\mathbb{P}$ and recover the information carried by each function $f_j$. In fact, without an appropriate rank property, there is no reason to believe that the operator above \eqref{eq:opmodel} is bounded. This idea produces an explicit example of a symbol $m$ in \cite{Kesler2019}, singular along the degenerate line 
\[\{(\xi, -\xi, 0) \; | \; \xi \in \mathbb{R}\}, \]
such that the operator $T_m$ does not satisfy any $L^p$ estimates in the Banach range.
\\ One motivation to study operators with degenerate symbols is provided by the operator introduced in \cite{Palsson2012}:
\[ T_\beta (f_1, f_2, f_3)(x):= p.v. \int_\mathbb{R} \left(\int_0^1 f_1(x+\alpha t)d\alpha \right)f_2(x+\beta t)f_3(x+t)\frac{dt}{t}.\]
It can be rewritten as an operator $T_{m_\beta}$ with symbol 
\[m_{\beta}(\xi_1, \xi_2, \xi_3)=\int_0^1\text{sgn}(\alpha \xi_1 +\beta \xi_2 + \xi_3)d\alpha\]
which is discontinuous along the degenerate line 
\[\{ (\xi_1, \xi_2, \xi_3, -(\xi_1+ \xi_2 + \xi_3)) \; | \; \xi_1 =0, \beta\xi_2+ \xi_3=0\}.\]
This operator is derived from Calderón's second commutator, dropping the average in $\beta$.
Without the averaging in $\alpha$, one recovers the trilinear Hilbert transform, for which no $L^p$ boundedness results are known. In particular, the result in \cite{MuscaluTaoThiele2002} does not apply since the singularity is of dimension $k=2=\frac{n}{2}$. Keeping the average over $\alpha$ inside the integral lowers the dimension of the singularity but it introduces some degeneracy and reduces the regularity of the symbol. Although the model operator in \cite{Palsson2012} differs from \eqref{eq:opmodel}, mainly because the symbol satisfies weaker regularity assumptions, it still admits an adapted rank property allowing one to prove boundedness in a smaller range of $L^p$ exponents. This suggests that the nondegeneracy assumption can be weakened at the expense of a smaller range of admissible $L^p$ exponents. 
\\ Beyond Palsson’s operator, another motivation to consider degenerate singularities arises from the study of multilinear operators along curves, as in \cite{BeneaBernicotLie}. For example, if one considers a hybrid trilinear Hilbert transform along curves given by 
\begin{equation*}
\label{eq:HTHT:1*}
\int_{\mathbb{R}} f_1(x-t^3) f_2(x+t) f_3(x-t) \frac{1}{t} dt,
\end{equation*}
an important step in the approach introduced in \cite{BeneaBernicotLie} is to prove the boundedness of multilinear anisotropic operators associated to the symbol
\[\sum_{k \in \mathbb{Z}}  \psi\big( \frac{\xi_1}{2^{3k}} \big) \psi\big( \frac{\xi_3-\xi_2}{2^k} \big).\]
Its isotropic version is the multilinear operator associated to 
\[m_1(\xi_1, \xi_2, \xi_3 ):=\sum_{k \in \mathbb{Z}}  \psi\big( \frac{\xi_1}{2^{k}} \big) \psi\big( \frac{\xi_3-\xi_2}{2^k} \big),\]
which is a Mikhlin symbol with respect to the subspace $\Gamma'_1=Span( (0, 1, 1, -2)^t)$.
\\Similarly, a Mikhlin symbol singular along $\displaystyle \Gamma'_2:=Span\{ (1, 1, -2, 0, 0, 0)^t, (0, 0, 0, 1, 1, -2)^t\}$ is given by
\[m_2(\xi_1, \ldots, \xi_5 ):=\sum_{k \in \mathbb{Z}}  \psi\big( \frac{\xi_1-\xi_2}{2^{k}} \big) \psi\big( \frac{\xi_3+2\xi_1}{2^k} \big)\psi\big( \frac{\xi_4-\xi_5}{2^k} \big).\]
An anisotropic version of this symbol is associated to the operator
\begin{equation*}
\label{eq:HTHT:2*}
\int_{\mathbb{R}} f_1(x-t^3-2t^5) f_2(x+t^3) f_3(x-t^5) f_4(x-t) f_5(x+t) \frac{1}{t} dt,
\end{equation*}
whose boundedness is, for the moment, an open problem.
\\The boundedness of the trilinear Hilbert transform, and of operators associated to symbols for which $n \leq 2k$ remains an important open problem in time-frequency analysis. By considering variants of such operators along curves, the dimension of the singularity is reduced but (anisotropic) degeneracies naturally appear. 
\\In this paper we show that, in certain situations at least, the presence of degeneracies is not a major obstruction. More precisely, the purpose of this paper is to extend the result from \cite{MuscaluTaoThiele2002} to symbols singular along certain degenerate subspaces, which in particular include the singularity considered in \cite{Palsson2012} and the ones appearing in the examples above.
\begin{definition} Let $(i_1,\dots, i_k)$ be a tuple such that $1\leq i_1< \cdots < i_k \leq n$. We say that $(i_1,\dots, i_k)$ is a nondegenerate direction if there exists a linear map $h \;: \mathbb{R}^{k} \rightarrow \Gamma$ (depending on the tuple $(i_1,\dots, i_k)$)  such that $\Gamma'$ can be parametrized via $h$ by these indices, i.e.
\[ \Gamma'= \left\{ h(\xi_{i_1}, \dots, \xi_{i_k}) \; | \; (\xi_{i_1},\dots, \xi_{i_k}) \in \mathbb{R}^k \right\}.\]
We will also describe them as good directions. 
\\ When this is not the case, we will say that $(i_1,\dots, i_k)$ is a degenerate or bad direction. 
\\ We say that a set of indices $\{i_1, \dots, i_r\}$ forms a nondegenerate set if any direction $(i_{j_1},\dots, i_{j_k})$, with $i_{j_1}, \dots, i_{j_k} \in \{i_1, \dots, i_r\}$ is nondegenerate.
\end{definition}
The assumption "$\Gamma'$ nondegenerate" in \cite{MuscaluTaoThiele2002} means that the set $\{1, \dots, n\}$ is a nondegenerate set. 
\\As an example, if we consider the singularity from the example above, $\Gamma'_1=\{ (0, \xi, \xi, -2 \xi)) \; | \; \xi\in \mathbb{R}\}$, then $(1)$ is a degenerate direction, $(2),(3)$, and $(4)$ are nondegenerate directions and $\{ 2,3,4\}$ is a nondegenerate set. 
\begin{definition}
 We say that $i \in \{ 1, \dots, n\}$ is a \emph{strongly degenerate index} if all directions $(i_1,\dots, i_k)$ with $i \in \{i_1,\dots, i_k\}$ are degenerate. We will denote by $s$ the number of \textit{strongly degenerate indices} for $\Gamma'$ and assume, if there are some, that they are the first coordinates $1, \dots,s$.
\end{definition}
\textit{Strongly degenerate indices} will be considered “bad” indices in the sense that they don't provide any information, in no direction and therefore we will in some sense leave them apart. 
\\If we again consider the same $\Gamma'_1$ as above, then $(1)$ is a strongly degenerate index. 
\\ With these definitions, we can now state our main results. 
 \begin{theorem}\label{th1}
 Let $\Gamma'$ be a $k$-dimensional subspace of $\Gamma$, and let $G\subset \{1, \dots, n\}$ be a maximal nondegenerate set. Denote $\sharp G:=n-l$ and assume that 
    \[  2k < n-l.\]
Suppose that $m$ satisfies the Mikhlin estimate \eqref{Mikhlin} relative to $\Gamma'$ for sufficiently many derivatives. Then 
    \[ T_m: L^{p_1 }\times \cdots \times L^{p_{n-1}} \rightarrow L^{p_n'}\]
    whenever \eqref{eq:range}, \eqref{eq:Holder} hold and
 \[ \frac{1}{p_{i_1}}+\cdots+\frac{1}{p_{i_r}}<\frac{n-l-2k+r}{2}\]
    for all $1\leq i_1<\dots <i_r\leq n$, $i_1, \dots, i_r \in G$ and $1 \leq r\leq n-l$.
\end{theorem}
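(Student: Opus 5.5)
We follow the strategy of \cite{MuscaluTaoThiele2002}, treating the indices outside the maximal nondegenerate set $G$ as carrying no information. The plan is to discretize $T_m$ into the model \eqref{eq:opmodel}, prove a rank property only for the indices in $G$, and run the tree/size/energy machinery using only those indices. The indices in $\{1,\dots,n\}\setminus G$ are controlled by trivial $L^\infty$-type bounds.

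\textbf{Step 1: discretization.} We first perform the standard Whitney decomposition of $\Gamma\setminus\Gamma'$ into cubes $Q$ with $\mathrm{diam}(Q)\sim \mathrm{dist}(Q,\Gamma')$. We then expand $m$ on each cube by Fourier series and use the Mikhlin estimate \eqref{Mikhlin} to obtain rapid decay of the coefficients. After a finite number of sparsifications, this reduces $T_m$ to averages of model forms
\[\Lambda_{\mathbb P}(f_1,\dots,f_n)=\sum_{P\in\mathbb P}|I_P|^{1-n/2}\prod_{j=1}^n\langle f_j,\phi^j_{P,j}\rangle .\]
We then split each index set according to $G$. For $j\in G$, the projection of the Whitney cube onto the $j$-th coordinate is, by nondegeneracy, at distance $\gtrsim$ its size from the projection of $\Gamma'$ whenever needed. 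This gives lacunary or nonlacunary behaviour of $\phi_{P,j}$ exactly as in the nondegenerate case. For $j\notin G$ we claim nothing, and simply treat $\phi_{P,j}$ as an $L^2$-normalized bump adapted to $I_P$ with no frequency information.

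\textbf{Step 2: rank property restricted to $G$.} We need to prove the analogue of the rank-$k$ property for the indices in $G$. The statement is: for any $k$ indices $i_1,\dots,i_k\in G$, the frequency intervals $\omega_{P,i_1},\dots,\omega_{P,i_k}$ determine $P$ up to $O(1)$ choices. In addition, if $\omega_{P,j}$ and $\omega_{P',j}$ are nested or close for some $j\in G$ and the scales differ, then the tiles are lacunary in at least $\sharp G-k$ of the other $G$-indices. This follows from linear algebra applied to the parametrization $h$ over each good direction. Because every $k$-subset of $G$ is good, the argument of \cite{MuscaluTaoThiele2002} applies verbatim inside the coordinates of $G$.

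\textbf{Step 3: size/energy estimates and interpolation.} For $j\in G$ we use the usual sizes and energies, the tree decomposition, and the restricted weak type estimates
\[|\Lambda_{\mathbb P}|\lesssim \prod_{j}\mathrm{size}_j^{\theta_j}\mathrm{energy}_j^{1-\theta_j}.\]
For $j\notin G$ we bound $|\langle f_j,\phi_{P,j}\rangle|\le |I_P|^{1/2}\sup_P \frac{1}{|I_P|}\int|f_j|\tilde\chi_{I_P}$. Hence each such function contributes only a maximal-function (size) factor, with no energy gain. The tree estimate then involves only the $n-l$ indices in $G$, each of which is lacunary in all but at most $k$ of them. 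The resulting combinatorics are those of an $(n-l)$-linear problem of rank $k$, which explains the hypothesis $2k<n-l$ and the conditions $\sum_{i\in\{i_1,\dots,i_r\}\subset G}1/p_i<(n-l-2k+r)/2$. Exponents for indices outside $G$ enter only through \eqref{eq:range} and \eqref{eq:Holder}, since their $L^\infty$/maximal bounds hold for all $p_j>1$ after the usual restricted-type exceptional set argument. Finally, we conclude by multilinear interpolation of restricted weak type estimates, as in \cite{MuscaluTaoThiele2002}.

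\textbf{Main obstacle.} We expect Step 2 together with its use in Step 3 to be the hardest part. The difficulty is to show that the tiles restricted to the $G$-coordinates still admit a clean tree structure, namely that the degenerate coordinates do not break the ``one $k$-tuple determines the tile'' property needed for the Bessel-type energy estimates. We would first try to pass to the quotient projection $\pi_G:\Gamma'\to\mathbb R^{G}$, show it is injective (since $\sharp G>k$ and some $k$-subset of $G$ is good), and reduce the problem to a nondegenerate singular subspace of an $(n-l)$-dimensional frequency space.
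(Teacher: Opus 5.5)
There is a genuine gap; in fact there are two.

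\textbf{Gap 1: the indices outside $G$.} For $j\notin G$ you bound $|\langle f_j,\phi^j_{P_j}\rangle|$ pointwise by $|I_P|^{1/2}\sup_P\frac{1}{|I_P|}\int|f_j|\tilde\chi_{I_P}$. This throws away exactly what is needed, because the two lacunary indices given by ($p_7$) need not lie in $G$.

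Take $\Gamma'_1$ with $G=\{2,3,4\}$. Consider a $2$-overlapping tree whose top is centred near $\gamma=(0,t,t,-2t)\in\Gamma'_1$, and whose multitiles have frequency cubes centred at $\gamma+a(1,0,0,-1)$ with $|a|\sim C_0|\omega_P|$. These are legitimate Whitney cubes: the centre lies in $\Gamma$ at distance $\sim|a|$ from $\Gamma'_1$. These multitiles are overlapping in indices $2$ and $3$, and lacunary only in indices $1$ and $4$.

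After your reduction, the tree sum has a single lacunary index. The sum $\sum_{P\in T}|I_P|^{1/2}|\langle f_4,\phi^4_{P_4}\rangle|$ is then only bounded by $(\sum_{P\in T}|I_P|)^{1/2}\|f_4\|_2$, which grows like the square root of the number of scales. Modulated bumps for $f_2,f_3$ and a lacunary trigonometric series for $f_4$ realize this growth.

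Consequences:
\begin{itemize}
\item The second claim of your Step 2 (lacunarity in $\sharp G-k$ further $G$-indices) is false already for $k=1$.
\item Projecting by $\pi_G$ does not produce an MTT-type problem. $\pi_G(\Gamma')$ is nondegenerate in $\mathbb{R}^G$, but it does not lie in $\{\sum_{j\in G}\xi_j=0\}$, and that constraint is what forces a second lacunary index.
\end{itemize}
The paper instead keeps, for $j\notin G$, the genuine $L^2$ size over $j$-lacunary trees. This is still bounded by a maximal average (Lemma~\ref{sizeestimate}). It then applies Lemma~\ref{lemmaprod} with all $n$ sizes, so Cauchy--Schwarz can be run in whichever two indices are lacunary.

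\textbf{Gap 2: the case $k\ge 2$.} No bound $|\Lambda_{\mathbb P}|\lesssim\prod_j\mathrm{size}_j^{\theta_j}\mathrm{energy}_j^{1-\theta_j}$ is available for $k\ge2$, even for nondegenerate $\Gamma'$. The trees produced by Lemma~\ref{lemmedec2} are overlapping in a single index $i$. Fixing one coordinate leaves a $(k-1)$-parameter family in $\Gamma'$. Hence these trees are not $j$-trees in the other indices, Lemma~\ref{lemmaprod} does not apply to them, and ``lacunary in all but at most $k$ indices'' is not a property they have.

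The missing idea is the induction on $k$ in Theorem~\ref{threcurrence}. On each $i$-overlapping tree:
\begin{itemize}
\item factor out the $i$-th coefficient;
\item observe that what remains is a rank $k-1$ collection on $G\setminus\{i\}$;
\item apply the induction hypothesis with exponents $\tilde\theta_j\theta'_j=\theta_j$ and $\sum_j\tilde\theta_j=n-l-2k+1$;
\item make the output carry a factor $|I_T|$, so that $\sum_T|I_T|\lesssim 2^{2m}$ can be used to sum over trees.
\end{itemize}
This is why the estimate has to be localized. Energies are replaced, through Proposition~\ref{energielocalisee}, by $\min(1,|I_0|)^{1/2}a_j(1+\lambda_j)^{1/2}$. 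The hypotheses on the $\lambda_j$ are supplied later by splitting $\mathbb P=\bigcup_d\mathbb P_d$ according to the distance to the exceptional set.

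Finally, exponents outside $G$ do not enter only through \eqref{eq:range} and \eqref{eq:Holder}. A bad index outside $G$ is admissible only under the rigid constraint $\sum_{j\in G}\alpha_j=n-l-k$ (the set $R_2$). This is what produces the weaker condition $p_n'>1/(n-k-1)$ when $n\notin G$.
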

Thus provided that we can find a large enough set of indices which parameterize the singularity in any direction generated by this set, the operator $T_m$ is bounded.
\\ One can notice the asymmetry between the constraints for Lebesgue exponents. This comes from the way we deal with the possible degenerate directions. In some sense the indices in $G$ play a nondegenerate part as in \cite{MuscaluTaoThiele2002} whereas the indices not in $G$ are left aside. The lower bound on $p_n'$ will therefore depend on whether $n\in G$ or not. Indeed if $n\in G$, then we get 
\[p_n'> \frac{1}{n-k-1/2}\]
whereas if $n \notin G$, we get 
\[p_n' > \frac{1}{n-k-1}.\]
Although we cannot apply Theorem \ref{th1} to the degenerate bilinear Hilbert transform, the same phenomenon can already be  observed in this setting. Indeed, if $H$ denotes the linear Hilbert transform, 
\[BHT_{1,0}(f,g)=H(f)g,\]
where $(1)$ is the degenerate direction, is bounded from $L^{p_1}\times L^{p_2}$ to $L^{\frac{p_1p_2}{p_1+p_2}}$ for all $p_1, p_2 \in (1, \infty)$. On the other hand,
\[BHT_{1,1}(f,g)=H(fg),\] 
for which $(3)$ is the degenerate direction, has a strictly smaller boundedness range since the linear Hilbert transform cannot be bounded below $L^1$.
\\ When we can chose $G=\{ 1, \dots, n\}$, i.e. $\Gamma'$ is nondegenerate, we recover the theorem from \cite{MuscaluTaoThiele2002}. When this is not the case, the conditions above are more restrictive than the ones in the nondegenerate setting. For example, if $m$ is a symbol singular along $\Gamma'_1$, satisfying the Mikhlin assumption \eqref{Mikhlin}, there is only one maximal set given by $G=\{2,3,4\}$, with $l=1$. In this case, we get that the operator $T_m$ is bounded from $L^{p_1}\times L^{p_2}\times L^{p_3}$ to $L^{p_4'}$ whenever $(p_1,p_2, p_3, p_4')$ satisfy \eqref{eq:Holder}, $p_1, p_2,p_3>1$, $\frac{1}{p_2}+\frac{1}{p_3}<\frac{3}{2}$ and $p_4'> \frac{2}{5}$. 
\\For the singularity $\Gamma'_1$, the choice of the set $G$ is unique. In general, however, this is not always the case. For instance, if we consider the singularity $\Gamma'_2$, one can see that $\{1,4\}$ and $\{1,5\}$ are both admissible choices for $G$. This leads to two observations.  
First, when there exist several admissible sets $G$ of sufficiently large cardinality, the above theorem can be applied using each of these choices. The corresponding boundedness results may then be interpolated to obtain a larger range of admissible $L^p$ exponents.
Second, observe that the parameter $l$ satisfies $l\geq s$ and they are not always equal, except for the case $k=1$. Indeed, for  $\Gamma'_2$, we have $l=4$ and yet $s=0$. 
In particular in this theorem we do not use the fact that some indices might belong to some nondegenerate directions and yet not be in any large enough admissible $G$.  This is why we introduce our second result, which makes use of this property. 
\\We denote  \begin{align*}
\Xi_{\Gamma'}:= \{(\alpha_{s+1},\dots, \alpha_n) \in (0,\frac{1}{2})^{n-s} \:| \: &\exists(\theta_{i_1,\dots, i_k})\in [0, 1]^{\sharp \{(i_1, \dots, i_k) \text{ good direction}\}}\: s.t. \\ & \alpha_j= \sum_{\substack{(i_1, \dots, i_k) \text{ good} \\ \exists r \text{ s.t. } i_r=j}} \theta_{i_1,\dots, i_k},  
 \sum_{(i_1, \dots, i_k) \text{ good}} \theta_{i_1, \dots i_k}=1 \} .\end{align*}
\begin{theorem}\label{th2}
    Let $\Gamma'$ be a subspace of $\Gamma$ of dimension $k$ such that
    \[2k < n-s.\] 
     Suppose that $m$ satisfies the estimate 
    \[ | \partial^\alpha m(\xi)| \lesssim \text{dist}(\xi, \Gamma ')^{-|\alpha|}\]
    for enough derivatives. Then 
        \[ T_m: L^{p_1 }\times \cdots \times L^{p_{n-1}} \rightarrow L^{p_n'}\]
        for all tuples such that \eqref{eq:range} and \eqref{eq:Holder} hold and  such that for all $j=1, \dots, s$:
    \[ p_{j}>1,\] and there exists $\alpha \in \Xi_{\Gamma'}$ such that for all $j=s+1, \dots, n$
    \begin{equation}\label{eq:alpha1}
        (1-\alpha_j)>\frac{1}{p_j}. \end{equation} 
\end{theorem}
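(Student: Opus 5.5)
The plan follows the Muscalu--Tao--Thiele scheme. We reduce to restricted weak type estimates for the discretized model \eqref{eq:opmodel}, and adapt the rank property to the degenerate geometry.

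\textbf{Step 1: discretization.} We decompose $m$ by a Whitney decomposition of $\Gamma\setminus\Gamma'$ into cubes $Q$ with $\operatorname{diam} Q\sim \operatorname{dist}(Q,\Gamma')$. Fourier expanding $m$ on each cube and using \eqref{Mikhlin} reduces $T_m$ to a rapidly convergent superposition of model operators $T_{\mathbb{P}}$. Projecting each cube onto the coordinates gives multi-tiles $P=P_1\times\cdots\times P_n$.

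\textbf{Step 2: the rank property.} We record the weakened rank property. For a good direction $(i_1,\dots,i_k)$, the frequency intervals $\omega_{P_{i_1}},\dots,\omega_{P_{i_k}}$ determine the cube up to $O(1)$ choices, because $\Gamma'$ is a graph over these variables. Moreover, since $2k<n-s$, for any non-strongly-degenerate index $j$ there are tiles with $\omega_{P_j}$ lacunary (i.e. $\omega_{P_j}$ stays at distance $\gtrsim|\omega_{P_j}|$ from the projection of $\Gamma'$) with respect to any fixed tree top, except along directions where $j$ is degenerate. The strongly degenerate indices $1,\dots,s$ carry no frequency localization information. For them we only use that $\phi_{P,j}$ is $L^2$-normalized and spatially adapted to $I_P$. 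Their contribution will be controlled by maximal-function (size-free) bounds, which gives only the condition $p_j>1$.

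\textbf{Step 3: trees and the size/energy lemma.} We define trees with respect to each good direction $\vec i=(i_1,\dots,i_k)$. A tree of type $\vec i$ has a top, and for $j\notin\vec i$ with $j>s$ its $j$-tiles are lacunary. We then define sizes $\operatorname{size}_j$ and energies as in \cite{MuscaluTaoThiele2002}, using only the indices $j>s$. The tree estimate reads
\[
\Bigl|\sum_{P\in T}|I_P|^{1-n/2}\prod_j\langle f_j,\phi_{P,j}\rangle\Bigr|\lesssim |I_T|\prod_{j\le s}\sup_{P\in T}\frac{|\langle f_j,\phi_{P,j}\rangle|}{|I_P|^{1/2}}\prod_{j>s}\operatorname{size}_j.
\]
It is proved by placing the (at least two) lacunary indices in $L^2$ square functions and the rest in $L^\infty$. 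The $|I_P|^{1-n/2}$ normalization balances after accounting for the $s$ non-size factors by averages.

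The decomposition algorithm runs on each type $\vec i$ separately. Energy bounds give $\sum_{T}|I_T|\lesssim \text{size}_j^{-2}\|f_j\|_2^2$ only for those $j$ lacunary in the trees, and trivial bounds otherwise. Summing, we obtain the estimate
\[
\prod_{j>s}E_j^{\theta_j}\prod_{j\le s}|E_j|\cdots,
\]
with exponents that we will combine by a convex combination $\theta_{\vec i}$ over good directions. Here $\alpha_j=\sum_{\vec i\ni j}\theta_{\vec i}$ is the weight with which $f_j$ is \emph{not} gaining from lacunarity. This is why $\alpha\in\Xi_{\Gamma'}$ appears and why the condition reads $1/p_j<1-\alpha_j$. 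Interpolating the collection of size/energy bounds $\operatorname{size}_j^{1-\alpha_j}$-type exponents, as in the interpolation lemma of Muscalu--Tao--Thiele (restricted type with a major subset removed for the exponent with $p_n'<1$), gives the stated range.

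\textbf{Step 4: interpolation.} We finish by multilinear restricted weak-type interpolation.

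\textbf{Main obstacle.} The main difficulty is Step 3, namely splitting the multi-tiles by good direction so that each tile lies in trees of some good type. If a collection of tiles is sparse in a degenerate direction, the tree estimate would lose. We plan to handle this by first pigeonholing the Whitney cubes according to which good direction gives the largest separation from $\Gamma'$, i.e. to which coordinates are "$\Gamma'$-lacunary". We will need to verify that every cube belongs to at least one class, which uses $2k<n-s$.
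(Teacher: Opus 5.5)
Your proposal has a genuine gap at the point where $\Xi_{\Gamma'}$ has to enter, and your treatment of the strongly degenerate indices does not work.

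\textbf{How $\Xi_{\Gamma'}$ enters.} Step 3 gives no mechanism that produces the condition $\alpha\in\Xi_{\Gamma'}$. If you pigeonhole the multitiles into classes indexed by good directions, the bounds for the classes add. They cannot be combined through a $\theta$-weighted geometric mean, so no ``convex combination over good directions'' comes out of that.

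Two further problems block Step 3 as written:
\begin{enumerate}
\item A decomposition run in a single index $j$ gives trees overlapping in one index only. For $k>1$ these are rank $k-1$ objects with no guaranteed pair of lacunary indices; this is why Theorem~\ref{th1} needs an induction on $k$.
\item A global energy bound $\sum_T|I_T|\lesssim \mathrm{size}_j^{-2}\|f_j\|_2^2$ cannot be chained across the $k$ indices of a direction.
\end{enumerate}

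The paper (Proposition~\ref{thm19}) argues differently:
\begin{enumerate}
\item It applies Lemma~\ref{lemmedec3} in \emph{every} index $j>s$ and intersects. Each $T=T_{s+1}\cap\cdots\cap T_n$ is then simultaneously a tree in every good direction.
\item Lemmas~\ref{lemmafinite} and \ref{lemmaprod} give $|\Lambda_T|\lesssim |I_T|\prod_{j}\mathrm{size}_{T,j}(f_j)$.
\item The \emph{same} sum $\sum_T|I_T|$ is bounded in $\mathfrak{d}$ ways, one per good direction $(i_1,\dots,i_k)$, by $\sum_{T_{i_1}}\cdots\sum_{T_{i_k}}|I_{T_{i_1}}\cap\cdots\cap I_{T_{i_k}}|\lesssim |I_0|\prod_{r=1}^k 2^{2m_{i_r}}\widetilde{\mathrm{size}}_{I_0}(\mathbb{1}_{F_{i_r}})$. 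This uses the localized energy (Proposition~\ref{energielocalisee}) iterated over nested intervals.
\item Because every factor bounds the same quantity, the geometric mean with weights $\theta_{i_1,\dots,i_k}$ is legitimate. It produces $\prod_{j>s}2^{-m_j(1-2\alpha_j)}\widetilde{\mathrm{size}}_{I_0}(\mathbb{1}_{F_j})^{\alpha_j}$.
\item This sums over $2^{-m_j}\lesssim\widetilde{\mathrm{size}}_{I_0}(\mathbb{1}_{F_j})$ exactly when $\alpha_j<\frac12$, and yields the exponents $1-\alpha_j$.
\end{enumerate}
So $\alpha_j$ is the share of the tree counting charged to index $j$, not a measure of ``not gaining from lacunarity''.

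The passage to $L^p$ is also different from yours. The paper does not use MTT restricted-type interpolation but Mock interpolation, sparse domination and duality. For $p_n'<1$ it adds a Lemma~\ref{lemma26}-type bound on $\|T_{\mathbb{P}(I_0)}(f_1,\dots,f_{n-1})\mathbb{1}_{F_n}\|_\tau^\tau$. Your Step 4 does not say how the bad-index estimates would be obtained.

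\textbf{Strongly degenerate indices.} These indices do carry frequency information, namely lacunarity around $0$, and it can be the only lacunarity available. Take $\Gamma'=\mathrm{Span}((0,0,1,1,-2)^t)$, so $n=5$, $k=1$, $s=2$ and $2k<n-s$. Consider Whitney cubes of side $r$ centred at $(C_0r,-C_0r,a,a,-2a)$, with $a$ fixed and $r$ dyadic. Their coordinates $3,4,5$ sit exactly on the projection of a point of $\Gamma'$, so no non-strongly-degenerate coordinate is $\Gamma'$-lacunary. This has three consequences:
\begin{enumerate}
\item Your planned verification that every cube falls into some good class fails.
\item These cubes form a tree that is overlapping in $3,4,5$ and lacunary only in $1,2$. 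Your description ``for $j\notin\vec i$, $j>s$, the $j$-tiles are lacunary'' is therefore false; $(p_7)$ only gives two lacunary indices, possibly both in $\{1,\dots,s\}$.
\item On such a tree the form behaves like a sum over scales of Littlewood--Paley pieces of $f_1,f_2$ times modulated averages of $f_3,f_4,f_5$. With only sup bounds on $f_1,f_2$, as in your tree estimate, you cannot sum over the scales.
\end{enumerate}
The paper avoids this by defining $\mathrm{size}_{\mathbb{P},j}$ for all $j=1,\dots,n$ and keeping these sizes in Lemma~\ref{lemmaprod}. Only at the end does it use $\mathrm{size}_{\mathbb{P}(I_0),j}(f_j)\lesssim\min\bigl(1,\widetilde{\mathrm{size}}_{I_0}(\mathbb{1}_{F_j})\bigr)$. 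That gives the factor $\widetilde{\mathrm{size}}_{I_0}(\mathbb{1}_{F_j})^{\beta_j}$, and hence only the condition $p_j>1$.
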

 In particular, using \eqref{eq:Holder}, \eqref{eq:alpha1} and the fact that for $\alpha \in \Xi_{\Gamma'}$, $\alpha_j<\frac{1}{2}$, one can notice that 
\[ p_n' > \frac{1}{n-k-\frac{1}{2}}.\]
We assumed for readability purposes that the strongly degenerate indices are $1, \dots, s$ but this theorem still remains valid for arbitrary strongly degenerate indices $i_1, \dots, i_s \in \{1, \dots, n\}$. In particular if $n$ is a strongly degenerate index, then the operator $T_m$ is still bounded but with a more restrictive condition on $p_n'$, which is 
\[ p_n'> \frac{1}{n-k-1}\]
as in the previous theorem.
As we said above, this approach uses the fact that when $k>1$, an index can belong both to a degenerate direction and a nondegenerate one, which means that together with $k-1$ other indices, it can parameterize the singularity while with some other indices, it does not. In the first approach, we decided to put those indices aside and provided that the number of indices we were left with was large enough (the exact condition was $n-l>2k$), we obtained the boundedness of the operator $T_m$.
Here, we only put aside the indices which do not parameterize the singularity in any direction and provided that we have enough indices left and that the set $\Xi_{\Gamma'}$ is not empty, we get a boundedness result. This theorem allows us to deal with more singularities than Theorem \ref{th1}, an example being $\Gamma_2'$, but the boundedness now depends on whether $\Xi_{\Gamma'}$ is not empty and the boundedness range is less explicit. Notice that if $\Xi_{\Gamma'}\neq \emptyset$, Theorem \ref{th2} gives boundedness at least in the local $L^2$ range. The matter of nonemptiness of $\Xi_{\Gamma'}$ and the $L^p$ range obtained will be discussed in detail in Section 6, in particular we will make explicit the range obtained for the singularity appearing in our second example $\Gamma_2'$. In the case $k=2$, the particular structure of the degenerate directions allows us to fully characterize when $\Xi_{\Gamma'}$ is nonempty in terms of the number of nondegenerate directions. More precisely, denoting by $\mathfrak{d}$ the number of nondegenerate directions, we have the following.
\begin{proposition} \label{caracterisationsolutions_k=2}
    Let $k=2$ and $n-s\neq 6$, then  $\Xi_{\Gamma '} \neq \emptyset$ if and only if $\displaystyle \mathfrak{d}\geq 2\binom{n-s-1}{k-1}=2(n-s-1)$.
\end{proposition}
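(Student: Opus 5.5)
The plan is to turn the condition $\Xi_{\Gamma'}\neq\emptyset$ into a purely combinatorial statement about the sizes of the "parallel classes" of coordinates. Choose a basis $u,v$ of $\Gamma'$ and attach to every index $i$ the vector $w_i=(u_i,v_i)\in\mathbb{R}^2$. The projection of $\Gamma'$ onto the coordinates $(\xi_i,\xi_j)$ is the linear map with matrix rows $w_i,w_j$. Hence $(i,j)$ is a good direction if and only if $w_i,w_j$ are linearly independent.

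\textbf{Step 1: strongly degenerate indices and parallel classes.} I first show that $i$ is strongly degenerate if and only if $w_i=0$. Indeed, if $w_i\neq 0$ and every $w_j$ were parallel to $w_i$, then $\Gamma'$ would have dimension at most $1$, which is impossible. Set $N:=n-s$, so that $N\ge 5$ by the hypothesis $2k<n-s$. The $N$ nonzero vectors $w_{s+1},\dots,w_n$ split into classes $C_1,\dots,C_m$ of mutually parallel vectors, with sizes $c_1,\dots,c_m$ and $c_a\le N-1$. The good directions are exactly the pairs taken from distinct classes, that is, the edges of the complete multipartite graph $K_{c_1,\dots,c_m}$. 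Consequently
\[
\mathfrak{d}=\frac{N^2-\sum_a c_a^2}{2},
\]
and $\Xi_{\Gamma'}$ is the set of vertex-load vectors of edge weightings $\theta\ge 0$ with $\sum\theta=1$ in which every load lies in $(0,\tfrac12)$.

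\textbf{Step 2: characterize $\Xi_{\Gamma'}\neq\emptyset$.} I claim that $\Xi_{\Gamma'}\neq\emptyset$ if and only if $\max_a c_a\le N-3$.

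For necessity, write $L_C=\sum_{j\in C}\alpha_j$ for the load of a class $C$. The loads sum to $2$, and every edge has at most one endpoint in $C$, so $L_C\le 1$. Hence the complement of $C$ carries load at least $1$. Since each of its vertices has load strictly less than $\tfrac12$, the complement must contain at least $3$ vertices.

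For sufficiency, I would use the standard fact that a nonnegative vector is the load vector of a fractional weighting of a complete multipartite graph exactly when no class carries more than half the total load. This can be proved by a max-flow/Hall argument, or by induction, repeatedly moving weight from the largest class to the next one. It then suffices to exhibit a target vector $\alpha$ with the following properties:
\begin{itemize}
\item every $\alpha_j$ lies in $(0,\tfrac12)$;
\item $\sum_j\alpha_j=2$;
\item every class load satisfies $L_{C_a}\le 1$.
\end{itemize}
If all $c_a\le N/2$, the uniform choice $\alpha_j=2/N$ works. Otherwise exactly one class $C$ has $c>N/2$. Then I put total load $1$ on $C$, spread uniformly, so each vertex gets $1/c<\tfrac12$. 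I put total load $1$ on the remaining $N-c\ge 3$ vertices, so each gets $1/(N-c)\le\tfrac13$. Every other class has size at most $N-c$, so its load is at most $1$.

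I expect this realizability step to be the main technical point. The delicate part is checking the strict inequalities and the positivity of every load.

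\textbf{Step 3: compare with $\mathfrak{d}\ge 2(N-1)$.} This is a short case analysis based on the formula for $\mathfrak{d}$ from Step 1.

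\emph{Large classes.} If $\max c_a\ge N-2$, the possible partitions are $(N-1,1)$, $(N-2,2)$ and $(N-2,1,1)$. These give $\mathfrak{d}=N-1$, $2N-4$ and $2N-3$ respectively, all strictly less than $2N-2$. This is consistent with $\Xi_{\Gamma'}=\emptyset$ in these cases.

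\emph{Small classes.} If $\max c_a\le N-3$, then $\mathfrak{d}$ is smallest when $\sum c_a^2$ is as large as possible under this constraint. The extremal partitions are $(N-3,3)$, giving $\mathfrak{d}=3N-9$, and $(N-3,2,1)$, giving $\mathfrak{d}=3N-7$. The value $3N-9$ is at least $2N-2$ exactly when $N\ge 7$. For $N=5$ the only admissible partitions are $(2,2,1)$, with $\mathfrak{d}=8=2(N-1)$, or finer ones.

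\emph{The excluded case.} When $N=6$, the partition $(3,3)$ gives $\Xi_{\Gamma'}\neq\emptyset$ but $\mathfrak{d}=9<10$. This is precisely why $n-s=6$ is excluded from the statement.

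Combining Steps 2 and 3 gives the stated equivalence whenever $n-s\neq 6$.
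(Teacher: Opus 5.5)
Your argument is correct, and it follows a genuinely different route from the paper's.

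\textbf{Shared structure.} Both proofs rest on the fact that the degenerate pairs form disjoint cliques. For you these are the parallel classes of the $w_i$. In the paper they are the connected components of $G_{\Gamma'}$, shown to be complete via Remark~\ref{intersect}.

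\textbf{The paper's route.} The paper never isolates your intermediate criterion $\max_a c_a\le N-3$. It works directly with $\mathfrak{d}$, taking $s=0$:
\begin{itemize}
\item For $\mathfrak{d}>2(n-1)$ it uses uniform weights $\theta=1/\mathfrak{d}$.
\item For $\mathfrak{d}=2(n-1)$ it shows at most two indices are strongly nondegenerate, and shifts a little weight by hand.
\item For $\mathfrak{d}<2(n-1)$ it shows that one or two indices meet every good direction, which forces $\alpha_i+\alpha_j\ge1$. This uses a direct count when $\mathfrak{d}<2n-4$, and an edge-counting identity on the component sizes when $\mathfrak{d}\in\{2n-4,2n-3\}$.
\end{itemize}

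\textbf{Comparison.} Your necessity step, that a class carries load at most $1$, is the same obstruction. Your sufficiency step via the realizability lemma, together with the closed formula $\mathfrak{d}=(N^2-\sum_a c_a^2)/2$, replaces all of the paper's case analysis and ad hoc weights. That includes the subcase with two strongly nondegenerate indices, which the paper only sketches. Your route also gives an exact, $\mathfrak{d}$-free characterization valid for every $N\ge5$. It explains the exception $n-s=6$ at once: the partitions $(3,3)$ and $(4,1,1)$ both have $\mathfrak{d}=9$, but only the first gives $\Xi_{\Gamma'}\neq\emptyset$. These are exactly the two configurations of Remark~\ref{remarktriangle}.

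\textbf{What you still need to write out.} The price is that the realizability lemma must actually be proved rather than gestured at. A short proof goes as follows.
\begin{itemize}
\item Pass to the bipartite double cover, which turns the problem into a transportation problem with supplies and demands $\alpha$.
\item There, Hall's condition $\alpha(X)\le\alpha(N(X))$ is automatic unless $X$ lies in a single class $C$.
\item In that case it reads $\alpha(X)\le 2-L_C$, which follows from $L_C\le1$.
\item Finally, symmetrize the resulting flow, setting $\theta_{uv}=\tfrac12\bigl(y_{u_Lv_R}+y_{v_Lu_R}\bigr)$.
\end{itemize}
Likewise, justify the extremal claim in Step~3. For $N\ge6$ the vector $(N-3,3)$ majorizes every partition of $N$ with parts at most $N-3$. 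Convexity then gives $\sum_a c_a^2\le (N-3)^2+9$, hence $\mathfrak{d}\ge 3N-9$.
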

\begin{remark}
In particular this proposition implies that, in the case $k=2$, provided there are enough nondegenerate directions, and $m$ is a symbol as in Theorem \ref{th2}, then $T_m$ is a bounded operator at least in the local $L^2$ range. 
\\ We give an overview of what happens in the case $n-s=6$ and $k=2$ in Section 6. 
\end{remark}
 If the singularity $\Gamma'$ is not degenerate, Theorem \ref{th2} retrieves the result from \cite{BeneaMuscalu2021} and the range of boundedness obtained is the same as in the classical result of \cite{MuscaluTaoThiele2002}. 
\par We give a short overview of the framework of the paper. In Section 2, we give a bit more explanations on what a degeneracy is and provide a few examples to illustrate this notion. In Section 3, we roughly explain how the boundedness of the operator $T_m$ can be studied through its wave packet decomposition introduced above in \eqref{eq:opmodel}. Section 4 presents some notions and results from \cite{MuscaluTaoThiele2002} that will be necessary in this paper. Section 5 consists in the proof of Theorem \ref{th1} following the approach of \cite{MuscaluTaoThiele2002} and refining the arguments to get some local estimates more suitable to allow in the analysis some degeneracies of $\Gamma'$. In Section 6 we give the proof of Theorem \ref{th2}, this time following \cite{BeneaMuscalu2021}, as well as some criteria for the nonemptiness of $\Xi_{\Gamma'}$. Finally, in Section 7 we extend the counterexample from \cite{Kesler2019} of an unbounded symbol singular along a degenerate line in $\mathbb{R}^3$ to $\mathbb{R}^n$ for any $n \geq 3$.

\section{Different kinds of degeneracies}
In this section we detail some remarks on the definitions of degeneracies we stated above and give a short overview of the situations that can occur, by illustrating with some examples. 
\\ First notice that since $\Gamma'$ is of dimension $k$, there must exist at least one nondegenerate direction. 
\\The nondegenerate or degenerate direction property can be restated using minors of the matrix formed by a base of the vector space $\Gamma'$. Indeed if 
$\Gamma'=\mathrm{Span}(u_1,\dots,u_k)$, the direction $(i_1, \dots, i_k)$ is nondegenerate if and only if 
\[ \det\left( [u_1,\dots,u_k]_{i_1,\dots,i_k} \right)\neq0\]
 where $[u_1,\dots,u_k]_{i_1,\dots,i_k}$ is the matrix composed of the coordinates $i_1,\dots,i_k$ of the vectors $u_1,\dots, u_k$ in the canonical base.
\\ From this we notice that there is a non trivial difference between the case $k=1$, where a degenerate direction $(i)$ is a zero on the $i$-th coordinate of a spanning vector. The case $k=1$ is thus easier to handle than the $k>1$ case where a $(i_1,\dots,i_k)$ degeneracy does not necessarily  mean that these coordinates are all equal to zero. Another noticeable difference, which will also motivate the second method presented here is that when $k>1$, an index $i$ can belong to a degenerate direction and a nondegenerate one. Some indices will therefore play a more important role than others depending on how many nondegenerate directions they take part in.
\begin{example} Let us look at the three subspaces $\Gamma' \subset \Gamma$ given by:
\begin{itemize}
    \item $\Gamma'_1= \text{Span}\left(\begin{bmatrix}
    0 \\ 1 \\ 1 \\-2
\end{bmatrix}\right) $, here $n=4, \:k=1$ and $l=s=1$, 
\item $\Gamma'_2= \text{Span}\left(\begin{bmatrix}
    1 \\ 1\\ -2  \\0 \\ 0 \\ 0 
\end{bmatrix} \:, \; \begin{bmatrix}
    0  \\ 0\\ 0  \\ 1 \\ 1 \\ -2 
\end{bmatrix}\right) $, here $n=6, \: k=2$, $s=0$ and $l=4$,  
\item $\Gamma_3'= \text{Span}\left(\begin{bmatrix}0 \\ 0 \\ 1  \\ 2 \\-3  \end{bmatrix}\; , \; \begin{bmatrix}
    0\\ 1\\ 1  \\ 3 \\ -5 
\end{bmatrix}\right)$, here $n=5,\: k=2$, and $l=s=1$. 
\end{itemize}
In the first case $(1)$ is a degenerate direction and we can easily see it as the vector spanning $\Gamma_1'$ has a zero on its first component.  In the second case, if we look at the first components, they are not all equal to zero and yet the direction $(1,2)$ is degenerate as the two lines formed by the first and second components of the two vectors are linearly dependent. One can however notice that $(1,j)$ is nondegenerate for $j=4,5,6$. In the third case, the first components of the two spanning vectors are zero and this implies that $(1,j)$ is a degenerate direction for all $j=2,\dots,5$. In particular $1$ is a \textit{strongly degenerate index}. In some sense the index $1$ is participating more in the second case than in the third. 
\end{example}
 One can notice that if $i$ is a \textit{strongly degenerate index} then for any $u \in \Gamma'$, $u_i=0$. Indeed consider a matrix formed of $k$ spanning vectors $u_1, \dots,u_k \in \mathbb{R}^n$ of $\Gamma'$. Then since $\Gamma'$ is of dimension $k$ there exists at least one invertible submatrix of size $k \times k$. Assume that such a submatrix is made of the first $k$ lines $l_1,\dots, l_k$ and assume that $k+1$ is a \textit{\textit{strongly degenerate index}}. Then there exists $\lambda_1,\dots, \lambda_k$ such that $l_{k+1}= \sum_{j=1}^k \lambda_jl_j$ and on another hand, we have for $j=1,\dots, k$
\[ 0=\det \left( \begin{bmatrix} l_1 \\ \vdots \\ l_{j-1} \\l_{j+1} \\ \vdots \\ l_k \\
    l_{k+1}
\end{bmatrix}\right)=\det\left( \begin{bmatrix} l_1 \\ \vdots \\ l_{j-1} \\l_{j+1} \\ \vdots \\ l_k \\ 
    \sum_{i=1}^k \lambda_i l_i
\end{bmatrix} \right)= \lambda_j \underbrace{\det \left(\begin{bmatrix}l_1 \\ \vdots \\ l_{j-1} \\l_{j+1} \\ \vdots \\ l_k \\ 
   l_j \end{bmatrix} \right)}_{\neq 0} \]
which implies that each $\lambda_j=0$ and in particular that $l_{k+1}=0$.
\begin{definition}
An index $i$ is said to be \emph{\textit{strongly nondegenerate}} if for every direction $(i_1, \dots, i_k)$ such that $i\in \{ i_1, \dots, i_k\}$, the direction $(i_1, \dots, i_k)$ is nondegenerate.
\end{definition}
The set $G$ defined in Theorem \ref{th1} always contains all \textit{strongly nondegenerate indices}.
\\ Notice also that if there exists a \textit{strongly degenerate index}, there cannot be any \textit{strongly nondegenerate index} and vice versa.
\begin{example} We consider the singularity
\[\Gamma'_4= \text{Span}\left(\begin{bmatrix}
    1 \\-1\\0 \\0 \\ 0\\0 
\end{bmatrix}, \begin{bmatrix}
    1  \\ 1 \\ 1\\1 \\1 \\ -5
\end{bmatrix}\right). \]
In this case, one can see that $1$ and $2$ are \textit{strongly nondegenerate indices} as any directions $(1,i)$ and $(2,i)$ are nondegenerate. 
\end{example}
\begin{remark} \label{remarquedeg}
We point out that in the case $k=2$, the degenerate directions have a specific structure. Indeed,  if $(i,j)$ and $(i,j')$ are two degenerate directions and $i$ is not a \textit{strongly degenerate index},  then $(j,j')$ is also degenerate. This is simply because the corresponding lines, in the matrix formed by the spanning vectors, are linearly dependent. 
\\ In the general case $k>2$, the set of $k\times k$ minors of the matrix has a particular algebraic structure that is difficult to take into account.  
\end{remark}
\begin{example} We consider two examples in the case $k=3$ to illustrate our remark: 
\[\Gamma'_5= \text{Span}\left(\begin{bmatrix}
    1 \\0 \\0 \\1 \\1 \\1 \\-4
\end{bmatrix}, \begin{bmatrix}
    0 \\ 1  \\ 0 \\ 0 \\1 \\2 \\ -4
\end{bmatrix},\begin{bmatrix}
    0 \\ 0 \\1 \\ 0 \\ 0 \\ 3 \\-4
\end{bmatrix}\right), \quad
 \Gamma'_6 = \text{Span}\left(\begin{bmatrix}
    1 \\0 \\0 \\1 \\1 \\1 \\-4
\end{bmatrix}, \begin{bmatrix}
    0 \\ 1  \\ 0 \\ 2 \\1 \\2 \\ -6
\end{bmatrix},\begin{bmatrix}
 0 \\ 0 \\1 \\ 0 \\ 0 \\ 3 \\-4
\end{bmatrix}\right). \]
In the case of $\Gamma_5'$, one can easily see that $l_1=l_4$ which implies that any direction $(1,4, j)$ is degenerate. On another hand, in the case of $\Gamma_6'$, one can also notice that $(1,2,5)$ and $(2,4,5)$ are degenerate directions since $l_5=l_1+l_2$ and $l_4=l_1+2l_2$. This implies that the directions $(1, 2, 4)$ and $(1,4,5)$ are degenerate. In some sense the degenerate directions $(1,4,j)$ in the case of $\Gamma_5'$ are "stronger" than the same ones in the case of $\Gamma_6'$. 
\end{example}
\section{Model form}
In this section, we briefly discretize our operator via a wave packet decomposition into a model operator in the spirit of \cite{MuscaluTaoThiele2002} and \cite{ThieleBook}. We first introduce a few definitions.
\begin{definition}
A tile $P=I_P\times \omega_P$ is a rectangle in $\mathbb{R}^2$ where $I_P$ is a dyadic interval and $\omega_P$ is a shifted dyadic intervals such that $|I_P|=|\omega_P|^{-1}$. $I_P$ is called the space interval and $\omega_P$ the frequency interval. 
\\ A multitile is a tuple $P=(P_1,\dots, P_n)$ such that each $P_j$ is a tile with same space interval, $I_P:=I_{P_1}=\cdots =I_{P_n}$. 
\end{definition}
\begin{definition}
A wave packet $\phi_P$ adapted to a tile $P$ is a smooth function such that
\[ \text{supp}(\hat{\phi_P})\subset \omega_P, \]
and $\phi_P$ is a bump function adapted to $I_P$, in the sense that 
\[ | (\phi_P (x) e^{-2i\pi c(\omega)x})^{(k)}(x)|\lesssim_{k, M} |I_P|^{-\frac{1}{2}-k} \left( 1 +\frac{\text{dist}(x, I_P)}{|I_P|}\right)^{-M}\]
for enough derivatives $k$ and a large enough $M$. 
\end{definition}
Notice that with this choice of normalization, the wave packets satisfy 
\[ \|\phi_P\|_2 \lesssim 1,\]
we will say that they are $L^2$-normalized. 
\\ Let us first notice that $\mathbb{R}^n\backslash \Gamma'$ can be covered by Whitney  cubes $Q$, in other words, $$\displaystyle \mathbb{R}^n\backslash \Gamma'=\bigcup_{Q\in \cal{Q}} \frac{9}{10}Q,$$ where for each $Q \in \cal{Q}$, $Q$ is a shifted dyadic cube such that 
\begin{equation} \label{eq:Whitneycubes}
    \mathrm{dist}(Q, \Gamma')\sim C_0\, \mathrm{diam}(Q)
\end{equation}
with $C_0$ a large constant and where the collection $\left(\frac{9}{10}Q\right)_{Q\in\cal{Q}}$ finitely overlaps. Therefore we can split our symbol $m$ into:
\[ m = \sum_{Q\in \mathcal{Q}}m_Q\]
where each $m_Q$ is supported on $Q$. Notice here that since $m$ is supported on $\Gamma$, we only consider the cubes $Q$ such that $Q\cap \Gamma \neq \emptyset$. By the Whitney property and the fast decay outside of $\Gamma'$ of the symbol $m$, we have for any $\xi \in Q$: 
\[ |\partial^\alpha m(\xi)|\lesssim \mathrm{diam}(Q)^{-|\alpha|}.\]
Now each $m_Q$ can be periodized, tensorized via a Fourier decomposition and then relocalized so that 
\[ m_Q(\xi)=\sum_{m\in\mathbb{Z}^n}c_m^Q \prod_{j=1}^n\psi_{Q_j}^{j,m}(\xi_j)\]
 where $Q=Q_1 \times\cdots \times Q_n$, $\psi_{Q_j}^{j,m}$ is a bump function adapted to $Q_j$ and $(c_m^Q)_{m}$ is a fast decaying sequence uniformly in $Q$. With the fast decaying property of $(c_m^Q)_m$, it is enough to consider only one term in the last summand.  
 \\ We are now left with a symbol of the form 
 \[\sum_{Q\in \mathcal{Q}} \prod_{j=1}^n \psi_{Q_j}^{j}\]
 where $Q$ is part of a collection $\cal{Q}$ of shifted dyadic cubes, with a Whitney property, such that $Q\cap \Gamma\neq \emptyset$. We can also add the assumption that $\cal{Q}$ is sparse in the sense that: for $Q,Q' \in \cal{Q}$ $|Q|<|Q'|$, then there exists a large constant $C$ such that $C|Q|<|Q'|$ and if $|Q|=|Q'|$, and for some $j=1, \dots, n$, with $Q_j\neq Q'_j$, then $CQ_j\cap CQ_j'= \emptyset$. 
 \\  We now consider the multilinear form associated with one term of the summand $\prod_j\psi_{Q_j}^j$:
 \begin{align*}
     \Lambda_Q(f_1,\dots,f_n)&= \int_\Gamma \prod_{j=1}^n\psi_{Q_j}^j(\xi_j)\hat{f}_j(\xi_j) d\xi \\
     &= \sum_{t \in \mathbb{Z}} \underbrace{\int_{\xi_1 +\cdots +\xi_n=C_nt|Q_1|}\prod_{j=1}^n\psi_{Q_j}^j(\xi_j)\hat{f}_j(\xi_j) d\xi}_{F(t)} \\
    &= \sum_{t \in \mathbb{Z}} \hat{F}(t) \\
    &= C_n^{-1} \sum_{t \in \mathbb{Z}}\int_{\mathbb{R}^n} |Q_1|^{-1} \prod_{j=1}^n \hat{f}_j(\xi_j) \psi_{Q_j}^j(\xi_j)e^{-2i\pi t \xi_j/(|Q_j|C_n)}d\xi \\
    &=C_n^{-1}\sum_{t \in \mathbb{Z}}  |Q_1|^{-1}\prod_{j=1}^n \left<\hat{f}_j, \tilde{\psi}_{Q_j,t}^j \right> \\
    &=C_n^{-1}\sum_{t \in \mathbb{Z}}  |Q_1|^{-1+\frac{n}{2}} \prod_{j=1}^n \left<f_j, \check{\psi}_{Q_j,t}^j \right> \\
\end{align*}
where $ \phi_{Q_j}^j=\check{\psi}_{Q_j,t}^j$ is a $L^2$ normalized wave packet adapted to a tile $P_j:=I_t \times Q_j$, with $I_t$ dyadic interval depending on $t \in \mathbb{Z}$. Here we used that since we are only considering cubes $Q$ such that $Q \cap \Gamma \neq \emptyset$, we have for all $t\in \mathbb{Z}^*$, that $Q \cap \{ (\xi_1, \dots, \xi_n) \; | \; \xi_1+\cdots +\xi_n=C_nt|Q_1| \}=\emptyset$ and applied the Poisson summation formula. As the constant $C_n$ only depends on the dimension and doesn't play any part, we will omit it in the following. We will also ommit the $t$ dependancy for readability purposes. 
\\ We will now denote $P:=(P_1, \dots,P_n)$, $I_P:=I_t$, $\omega_{P_j}:=Q_j$ and $Q_P=(Q_1,\dots, Q_n)$. 
\\ To sum up, we are considering a multilinear form of the type 
\[ \Lambda_\mathbb{P}(f_1, \dots, f_n)=\sum_{P\in \mathbb{P}}|I_P|^{1-\frac{n}{2}} \prod_{j=1}^n \left< f_j, \phi_{P_j}^j\right>\]
and the associated operator 
\[ T_{\mathbb{P}}(f_1,\dots ,f_{n-1})(x)= \sum_{P \in \mathbb{P}}|I_P|^{1-\frac{n}{2}} \left(\prod_{j=1}^{n-1}\left<f_j,\phi_{P_j}^j \right>\right)\phi_{P_n}^n(x)\]
where $\mathbb{P}$ is a collection of multitiles satisfying some specific properties that we will explain in the following. For a more detailed discussion, we refer the reader to \cite{MuscaluTaoThiele2002} and \cite{DemeterPramanikThiele2010}.
\\ \begin{itemize}
    \item ($p_1$) First we may assume that the collection $\mathbb{P}$ is finite as the reasoning we use allows a limiting argument.
    \item ($p_2$) (Whitney) For any $P \in \mathbb{P}$, $\text{dist}(Q_P, \Gamma') \sim C_0|Q_P|$.
    \item ($p_3$) (scale separation) If $P$, $P' \in \mathbb{P}$, with $|Q_P|<|Q_{P'}|$, then there exists a large constant $C$ such that $C|Q_P|<|Q_{P'}|$.
    \item ($p_4$) (distance separation) If $P$, $P' \in \mathbb{P}$, with $|Q_P|=|Q_{P'}|$,  and if for some $j=1, \dots,n $, $\omega_{P,j}\neq \omega_{P',j}$ then  $C\omega_{P,j} \cap C\omega_{P',j}= \emptyset$.
\end{itemize}
The two last properties are exactly the sparseness assumption on the Whitney cubes.  
\\ As in \cite{MuscaluTaoThiele2002}, the collection $\mathbb{P}$ will have a sort of rank property. To make it clearer, we first give some notations on tiles. 
\begin{definition} Let $P$, $P'$ be two tiles, then we denote 
\begin{itemize}
    \item $P<P'$ if $I_P\subsetneq I_{P'}$ and $\omega_{P'} \subset 3 \omega_{P}$,
    \item $P\leq P'$ if $P<P'$ or $P'=P$,
    \item $P \lesssim P'$ if $I_P \subset I_{P'}$ and $\omega_{P'}\subset C_1C_0 \omega_{P}$, 
    \item $P \lesssim' P'$ if $P\lesssim P'$ and $P\not \leq P'$.
    \item  $P \lesssim \lesssim P'$ if $I_P \subset I_{P'}$ and $\omega_{P'} \subset C_2 \omega_{P}$.
\end{itemize}
\end{definition}
Here $C\gg C_2 \gg C_1C_0 \gg C_0 \gg 1$. 
This ordering is the same one as in \cite{MuscaluTaoThiele2002}, and it allows us with the above properties to get a very similar result on our collection $\mathbb{P}$. The only difference is that we have to deal with the degenerate directions. 
\begin{lemma} \label{lemmarank}
Let $(i_1, \dots, i_k)$ be a nondegenerate direction. 
\begin{itemize}
    \item ($p_5$) (rank) Let $P$, $P' \in \mathbb{P}$, such that for all $r=1,\dots k$, $P_{i_r}'=P_{i_r}$, then $P=P'$. 
    \item ($p_6$) (overlapping) Let $P, P'\in \mathbb{P}$ such that for all $r=1, \dots, k$, $P_{i_r}' \leq P_{i_r}$, then 
 $ P_i' \lesssim P_i $ for all $i=1, \dots, n$.
    \item ($p_7$) (lacunary) Furthermore, under the same conditions as ($p_6$), if $|I_P|\ll |I_{P'}|$, then there exists at least two indices $j$ and $j' \in \{1, \dots, n\}$ such that
\[ P_j' \lesssim' P_j, \quad \text{and} \quad P_{j'}'\lesssim' P_{j'}.\]
\end{itemize}
\end{lemma}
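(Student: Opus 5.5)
Since $(i_1,\dots,i_k)$ is a nondegenerate direction, there is a linear map $h$ with $\Gamma'=\{h(\xi_{i_1},\dots,\xi_{i_k})\}$. The plan is to use it as a Lipschitz projection onto $\Gamma'$. For each multitile $P$ let $c_P\in Q_P$ be the center of the Whitney cube, and let $\gamma_P:=h((c_P)_{i_1},\dots,(c_P)_{i_k})\in\Gamma'$. Since $h$ is linear, for any $\xi\in Q_P$ and any $\gamma\in\Gamma'$ we have
\[ |\xi-h(\xi_{i_1},\dots,\xi_{i_k})| \le |\xi-\gamma| + |h(\gamma_{i_1}-\xi_{i_1},\dots,\gamma_{i_k}-\xi_{i_k})| \lesssim_h |\xi-\gamma|. \]
Combined with ($p_2$), this gives $|c_P-\gamma_P|\sim_h C_0|Q_P|$, with an upper bound $C_h C_0|Q_P|$ whose constant depends only on $\Gamma'$ and the chosen direction. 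All constants $C_1,C_2,C$ are chosen large relative to $C_h$, which is possible because there are finitely many directions.

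For ($p_5$), suppose $P_{i_r}=P'_{i_r}$ for all $r$. Then $I_P=I_{P'}$, so $|Q_P|=|Q_{P'}|$, and $\omega_{P,i_r}=\omega_{P',i_r}$. Hence the $i_r$-coordinates of $c_P$ and $c_{P'}$ agree, so $\gamma_P=\gamma_{P'}$. For every other index $j$, both $\omega_{P,j}$ and $\omega_{P',j}$ lie within distance $C_hC_0|Q_P|$ of $(\gamma_P)_j$. Since $C\gg C_hC_0$, this forces $C\omega_{P,j}\cap C\omega_{P',j}\neq\emptyset$, so by ($p_4$) $\omega_{P,j}=\omega_{P',j}$. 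Thus $P=P'$. The time parameter $t$ is already shared through $I_P$.

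For ($p_6$), suppose $P'_{i_r}\le P_{i_r}$ for all $r$. Then $I_{P'}\subset I_P$ and $\omega_{P,i_r}\subset 3\omega_{P',i_r}$, so $|(c_P)_{i_r}-(c_{P'})_{i_r}|\lesssim|Q_{P'}|$. Applying $h$ gives $|\gamma_P-\gamma_{P'}|\lesssim_h|Q_{P'}|$. For any $j$,
\[ |(c_P)_j-(c_{P'})_j|\le |(c_P)_j-(\gamma_P)_j|+|\gamma_P-\gamma_{P'}|+|(\gamma_{P'})_j-(c_{P'})_j| \lesssim C_hC_0(|Q_P|+|Q_{P'}|)\lesssim C_hC_0|Q_{P'}|, \]
because $|Q_P|\le|Q_{P'}|$. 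Hence $\omega_{P,j}\subset C_1C_0\,\omega_{P',j}$, which is $P'_j\lesssim P_j$.

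For ($p_7$), with $|I_P|\gg|I_{P'}|$, i.e.\ $|Q_P|\ll |Q_{P'}|$, argue by contradiction. Suppose at most one index $j_0$ has $P'_{j_0}\lesssim' P_{j_0}$, so $P'_j\le P_j$ for all $j\neq j_0$. Then $\omega_{P,j}\subset 3\omega_{P',j}$ for $j\ne j_0$. The Whitney property of $Q_P$ forces $\omega_{P,j_0}$ also to lie near $3\omega_{P',j_0}$: indeed $c_P\in\Gamma$ and $c_{P'}\in\Gamma$ up to $O(|Q|)$, since both cubes meet $\Gamma$, so the $j_0$ coordinate is determined by the others. This gives $|c_P-c_{P'}|\lesssim|Q_{P'}|$ with an absolute constant of size about $n$. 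Then
\[ \mathrm{dist}(Q_{P'},\Gamma')\le |c_{P'}-c_P|+\mathrm{dist}(Q_P,\Gamma')\lesssim |Q_{P'}|+C_0|Q_P|\ll C_0|Q_{P'}|, \]
which contradicts ($p_2$) for $P'$ once $C_0\gg n$. Hence at least two indices satisfy $\lesssim'$.

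This last step is the delicate one. It uses the constraint $\sum_j\xi_j=0$ on $\Gamma$ rather than $h$ alone. One must also check that "$P'_j\lesssim'P_j$" fails only when $P'_j\le P_j$, i.e.\ the strict inclusion $I_{P'}\subsetneq I_P$ holds automatically from the scale gap. One must also track that the constant hierarchy $C\gg C_2\gg C_1C_0\gg C_0$ is compatible with the $h$-dependent constants.
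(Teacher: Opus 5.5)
Your proof is correct and follows the route the paper indicates by deferring to \cite{MuscaluTaoThiele2002}: the graph map $h$ of the nondegenerate direction, combined with the Whitney property $(p_2)$ and sparseness $(p_3)$--$(p_4)$, gives $(p_5)$ and $(p_6)$, and the constraint $\sum_j\xi_j=0$ together with the Whitney property gives $(p_7)$. You also rightly read the hypothesis of $(p_7)$ as $|I_{P'}|\ll|I_P|$, since the printed condition $|I_P|\ll|I_{P'}|$ cannot hold when $I_{P'}\subset I_P$, which the assumptions of $(p_6)$ force.
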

The proof is similar as the one in \cite{MuscaluTaoThiele2002} as it only relies on the fact that a direction $(i_1, \dots, i_k)$, nondegenerate in our case, parametrizes the singularity and then uses the Whitney property of the multitiles. 
\\ Notice that an analogue of ($p_6$) holds for $\lesssim$. Indeed if $P, P'\in \mathbb{P}$ are such that for all $r=1, \dots, k$, $P_{i_r}'\lesssim P_{i_r}$, then $P_{i}' \lesssim \lesssim P_{i}$ for all $i$. 
\\ When $\Gamma'$ is nondegenerate, ($p_5$),($p_6$) and $(p_7)$ hold for any direction $(i_1, \dots, i_k)$. In particular, when ($p_5$) holds for any direction $(i_1, \dots, i_k)$, we say that $\mathbb{P}$ is of rank $k$.
\\When $\Gamma'$ is degenerate, ($p_5$) does not hold anymore for all directions but only for the nondegenerate ones, and so $\mathbb{P}$ is not of rank $k$. However, under the assumptions of Theorem \ref{th1}, we can still have a similar rank $k$ property if we only consider the indices in $G$. We propose a definition of rank to try to match this new property, that appeared in \cite{Palsson2012}.
\begin{definition}
We say that a collection $\mathbb{P}$ is of rank $k$ with respect to $\{ i_1, \dots, i_{n-l}\}$ if the collection \[\tilde{\mathbb{P}}=\{ \tilde{P}=(\tilde{P}_{i_1} ,\dots,\tilde{P}_{i_{n-l}})\;| \;\exists P \in \mathbb{P},\; P_{i_j}=\tilde{P}_{i_j}\}\] is of rank $k$.
\end{definition}
\begin{remark}
In the case $k=1$, the nondegenerate directions are all the indices $i>s$, therefore the collection $\mathbb{P}$ is of rank $1$ with respect to $\{s+1,\dots, n\}$. 
\\ In the case $k>1$, it is less clear as directions are not indices and an index can belong to degenerate and nondegenerate directions. If the singularity is as in Theorem \ref{th1}, then the collection $\mathbb{P}$ is of rank $k$ with respect to $G$. 
\end{remark}
Consequently, the operator $T_m$ can be seen as a superposition of discrete model operators. It therefore suffices to establish bounds for these models. In the following, we will focus on $T_\mathbb{P}$, or, equivalently, on its associated multilinear form $\Lambda_{\mathbb{P}}$, where $\mathbb{P}$ is a finite sparse collection of multitiles satisfying Lemma \ref{lemmarank}. 
\section{Trees, size, energy}
In this section we present tools from \cite{MuscaluTaoThiele2002} that will be needed in the next sections. We begin by defining our key notions.  
\begin{definition} Let $i, j$ be integers. 
    A subcollection $T \subset \mathbb{P}$ is said to be an $i$-tree, if there exists a multitile $P_T$ called the top of the tree such that for all $P\in T$,
    \[P_i\lesssim P_{T,i}.\]
    We denote $I_T:=I_{P_T}$, the space interval of $P_T$.
\\ A $j$-tree is said to be $j$-overlapping if for all $P\in T$, $P_j\leq P_{T,j}$.
\\ It is said to be $j$-lacunary if for all $P\in T$, $P_j\lesssim' P_{T,j}$.
\end{definition}
Notice that if the collection is of rank $1$, then an $i$-overlapping tree, $T$, is $j$-lacunary for at least two indices $j\neq i$. Furthermore, the rank 1 property implies that $T$ is a $j$-tree for every $j$.
\\ When $k>1$ and the collection is of rank $k$, this property does not hold anymore. Nevertheless, in this case an $i$-overlapping tree is a collection of rank $k-1$. Thus, if a collection was an $i$-overlapping tree for $k$ distinct indices, then it would preserve the tree structure in every index and still enjoy two lacunary indices. This leads us to define $(i_1,\dots, i_k)$-trees.
\begin{definition} Let $T \subset \mathbb{P}$. We say that $T$ is a $(i_1,\dots, i_k)$ tree if for each $j=i_1, \dots, i_k$ the collection $T_j=\left\{P_j \;|\; P\in T \right\}$ is a tree in the sense that there exists $I_{T_j}\times \omega_{T_j}$ a tile such that $P_j \lesssim I_{T_j}\times \omega_{T_j} $ for all $P \in T$. 
\\ We say that $T$ is a $(i_1, \dots, i_k)$-overlapping tree whenever it is $i_j$-overlapping for each $j=1, \dots, k$.
\end{definition}
When the collection $\mathbb{P}$ is of rank $k$, any $(i_1,\dots, i_k)$-overlapping tree is a $j$-tree for every $j$ and it is lacunary for at least two indices. 
\\ Since we consider singularities with degeneracies, the collection $\mathbb{P}$ is not of rank $k$ anymore and thus the previous implication does not hold. However, if $(i_1, \dots, i_k)$ is a good direction, i.e. a nondegenerate direction, then by Lemma \ref{lemmarank}, if $T$ is a $(i_1, \dots, i_k)$-overlapping tree, it is a $j$-tree for any $j=1, \dots,n$, and has at least two indices for which it is lacunary. Because of the sparseness assumption and the Whitney property, restricting the summation to a $(i_1, \dots, i_k)$-tree when $(i_1, \dots, i_k)$ is a nondegenerate direction can be seen as summing over dyadic intervals. 
\\ We will therefore only consider trees on indices that are not \textit{strongly degenerate} as these indices dot not belong to any nondegenerate direction and thus trees in these indices will provide no additional information on the collection.
\\ We recall that $\{1, \dots, s\}$ are strongly degenerate indices and therefore overlapping trees in one of these indices do not provide any information. Thus, from now on, we will only consider $j$-overlapping trees for $j \in \{ s+1, \dots, n\}$.
\begin{definition}
    Let $j \in \left\{1,\dots,n\right\}$, we call $j$-size of a collection $\mathbb{P}$, the quantity 
    \[ \mathrm{size}_{\mathbb{P},j}(f_j)=\sup_T\left( \frac{1}{|I_T|} \sum_{\substack{P_j \in T(j)}}|\langle f_j, \phi_{P_j}^j\rangle|^2\right)^{1/2}\]
    where $T$ ranges over all $j$-lacunary trees that are also $i$-overlapping for an index $i\in \{s+1, \dots, n\}$ and $T(j):=\{P_j \; | \; \exists \tilde{P} \in T \; s.t. \; \tilde{P}_j=P_j\}$ is the projection on the $j$-th coordinate of $T$.
\end{definition}
\begin{remark}
     Notice that the size depends on the sequence $(\phi_{P_j}^j)_{P \in \mathbb{P}}$ but we will omit this in the notation as it can be bounded uniformly. 
   \\ We have by definition 
    \begin{equation} \label{eq:suplessthansize}
    \sup_{P\in \mathbb{P}}|I_P|^{-1/2} |\langle f_j, \phi_{P_j}^j\rangle| \leq \mathrm{size}_{\mathbb{P},j}(f_j).\end{equation} 
   \\Traditionally, the size is a quantity that only sees what happens in the $j$ variable, and it can be seen as a function on the projection on the $j$ index of the collection $\mathbb{P}$. Here we however ask for a little more, that is that the trees we consider are $i$-overlapping trees in another index. This does not change much in the reasoning but helps simplify some arguments from \cite{MuscaluTaoThiele2002}. 
\end{remark}
We now present the key lemma, that allows us to get estimates on a tree. 
\begin{lemma} \label{lemmaprod} Let $T$ be an $(i_1, \dots, i_k)$-tree for $(i_1, \dots, i_k)$ a nondegenerate direction such that $T$ is lacunary for two indices, then: 
    \[ |\Lambda_T(f_1,\dots,f_n)|\leq |I_T| \prod_{j=1}^n \mathrm{size}_{T,j}(f_j).\]
\end{lemma}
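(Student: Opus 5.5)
By Lemma \ref{lemmarank}, since $(i_1,\dots,i_k)$ is a nondegenerate direction, the tree $T$ is a $j$-tree for every $j=1,\dots,n$. In particular, for every $P\in T$ and every $j$ we have $I_P\subset I_T$ and $P_j\lesssim\lesssim P_{T,j}$. By the sparseness assumptions ($p_3$), ($p_4$) together with the rank property ($p_5$) in the good direction, a multitile $P\in T$ is then determined by its space interval $I_P$. Hence summing over $T$ amounts to summing over a family of dyadic intervals $I\subset I_T$, each carrying at most $O(1)$ multitiles.

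The plan is the standard one from \cite{MuscaluTaoThiele2002}. Let $j_1\neq j_2$ be the two lacunary indices. For every other index $j\notin\{j_1,j_2\}$, we bound the coefficient pointwise using \eqref{eq:suplessthansize}:
\[|\langle f_j,\phi^j_{P_j}\rangle|\le |I_P|^{1/2}\,\mathrm{size}_{T,j}(f_j).\]
This produces a factor $|I_P|^{(n-2)/2}$, which exactly cancels the normalization $|I_P|^{1-n/2}$ up to a leftover factor $|I_P|^{0}$. We are then left with
\[|\Lambda_T|\le \prod_{j\ne j_1,j_2}\mathrm{size}_{T,j}(f_j)\sum_{P\in T}|\langle f_{j_1},\phi^{j_1}_{P_{j_1}}\rangle||\langle f_{j_2},\phi^{j_2}_{P_{j_2}}\rangle|,\]
since $|I_P|^{1-n/2}\cdot|I_P|^{(n-2)/2}=1$. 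We then apply Cauchy--Schwarz in $P$ and bound each of the two square sums by $|I_T|\,\mathrm{size}_{T,j_i}(f_{j_i})^2$, which gives the claimed factor $|I_T|$.

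The point needing care is that last bound. We must check that $T$ itself, or more precisely its projection $T(j_i)$, is admissible in the supremum defining $\mathrm{size}_{T,j_i}$. That supremum runs over trees that are $j_i$-lacunary and $i$-overlapping for some index $i\in\{s+1,\dots,n\}$. The tree $T$ is $j_i$-lacunary by hypothesis, and it is overlapping in the good direction indices $i_r$, none of which is strongly degenerate, so $i_r>s$. If "$(i_1,\dots,i_k)$-tree" in the statement is only a tree and not an overlapping one, I would first split $T$ into $O(1)$ subtrees. Each subtree is either overlapping in some $i_r$, or, by ($p_7$), already lacunary in $i_r$; in the latter case the tree can be regrouped by a top adjusted by a bounded dilation. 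This splitting must also preserve the multiplicity of tiles: $P\mapsto P_{j_i}$ is at most $O(1)$-to-one, by ($p_5$) and the dyadic-interval parametrization above, so no square is counted more than boundedly many times.

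I expect this admissibility and multiplicity bookkeeping to be the only real obstacle. In the degenerate setting, projections $P\mapsto P_j$ can fail to be injective for general $j$. Here, though, the tree structure in a good direction fixes $P$ from $I_P$, which rescues injectivity up to constants. Once this is settled, the estimate follows from the sup bound and a single Cauchy--Schwarz.
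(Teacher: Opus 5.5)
Your main argument is the paper's own one-line proof, written out in full. You bound the $n-2$ non-lacunary coefficients by \eqref{eq:suplessthansize}, cancel $|I_P|^{(n-2)/2}$ against $|I_P|^{1-n/2}$, and apply Cauchy--Schwarz to the two lacunary indices. Two of your checks fill in details the paper leaves implicit, and both are correct. First, $P\mapsto P_j$ is injective on $T$: ($p_4$) and the tree condition give one frequency interval per scale in each $i_r$, and then ($p_5$) applies. Second, $T$ is admissible in the restricted size because it overlaps in the indices $i_r>s$.

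The step that does not work is your fallback for a tree that overlaps in no index. Such a piece cannot in general be made admissible by moving its top. Overlap in $i$ requires $\omega_{P_T,i}\subset 3\omega_{P_i}$ for every $P$, so the $3$-dilates of the $i$-frequency intervals must share a point. For Whitney tiles at scales separated by the factor $C$ of ($p_3$), these dilates can be pairwise disjoint in every index. An example is frequency cubes centred at $\gamma+C_0 2^m v$, with $\gamma\in\Gamma'$ and $v\in\Gamma$ transverse to $\Gamma'$ with no vanishing coordinate. These tiles still form a tree in every index, lacunary in all of them.

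The admissible subtrees of such a $T$ are then single-scale, so the restricted size cannot control $\frac{1}{|I_T|}\sum_{P\in T}|\langle f_j,\phi^j_{P_j}\rangle|^2$, which sums over all scales. Take $f_j=\sum_m e^{2\pi i c(\omega_{P^{(m)}_j})x}\chi$ with $\chi$ a bump adapted to $I_T$. Then all coefficients are $\approx |I_P|^{1/2}$ with a common phase, every restricted size is $\lesssim 1$, and $|\Lambda_T|\approx |I_T|$ times the number of scales. So the inequality itself fails in that generality, and no splitting can rescue it.

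The correct fix is to include in the hypothesis that $T$ overlaps, with respect to the same top, in some index of $\{s+1,\dots,n\}$. This is how the lemma is used in Theorem \ref{threcurrence}, where the trees come from Lemma \ref{lemmedec2} and are overlapping in such an index. With that hypothesis, drop the fallback and your proof is complete.
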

In particular if $k=1$, $T$ is a $j$-overlapping tree for $(j)$ a nondegenerate direction, the above bound holds. 
\\ Since there are two lacunary indices, this is obtained by using the bound \eqref{eq:suplessthansize} and Cauchy Schwarz inequality. 
\\ However, we need the assumption that $T$ is lacunary in two indices. We therefore introduce a lemma which gives a sufficient condition to get two lacunary indices. 
\begin{lemma} \label{lemmafinite}
   Let $T$ be a $j$-tree for every $j=s+1, \dots, n$. Then $T$
can be split into finitely many subtrees with at least two lacunary indices.
\end{lemma}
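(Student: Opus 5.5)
The plan is to split the $j$-tree $T$ according to which of its tiles $P$ fail to be lacunary in a given index, and to use Lemma \ref{lemmarank} through a nondegenerate direction to show that a tile cannot fail lacunarity in too many indices at once. Since $T$ is a $j$-tree for every $j\in\{s+1,\dots,n\}$, for each such $j$ and each $P\in T$ we have $P_j\lesssim P_{T,j}$. Hence either $P_j\leq P_{T,j}$ or $P_j\lesssim' P_{T,j}$, i.e. $P$ is overlapping or lacunary at $j$ relative to the top.

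For each $P\in T$ let $O(P)\subset\{s+1,\dots,n\}$ be its set of overlapping indices. We partition $T$ into the subcollections
\[ T_A:=\{P\in T \; | \; O(P)=A\}, \qquad A\subset\{s+1,\dots,n\}. \]
There are at most $2^{n-s}$ of them, which gives finiteness. Each $T_A$ is still a $j$-tree with the same top, and it is $j$-lacunary for every $j\in\{s+1,\dots,n\}\setminus A$. If $\sharp(\{s+1,\dots,n\}\setminus A)\geq 2$ we are done for that piece. Note that indices $1,\dots,s$ are never counted as lacunary, consistent with the size only being taken over indices where trees carry information.

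The remaining case, which is the main obstacle, is when $A$ contains all but at most one index of $\{s+1,\dots,n\}$. Here we want to produce a nondegenerate direction $(i_1,\dots,i_k)\subset A$. Since every index $j>s$ lies in some good direction and $2k<n-s$, I would argue that a subset missing at most one such index still contains a good direction. A direct way to see this: the rows $s+1,\dots,n$ of a spanning matrix have rank $k$, and since $n-s-1\geq 2k>k$, removing one row either leaves rank $k$ or that row was needed. The latter forces the remaining rows to span a $(k-1)$-dimensional space. We exploit the Whitney/hyperplane constraint (rows sum to zero, and rows $1..s$ vanish) to rule this out: the removed row equals minus the sum of the others, so it lies in their span. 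Hence the rank stays $k$, and there is a nonzero $k\times k$ minor within $A$, i.e. a good direction in $A$.

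With such a direction, every $P\in T_A$ satisfies $P_{i_r}\leq P_{T,i_r}$ for $r=1,\dots,k$. Property ($p_7$) of Lemma \ref{lemmarank} then says that when $|I_P|\ll |I_T|$ there are at least two indices $j,j'$ with $P_j\lesssim' P_{T,j}$. This can only happen at indices in $\{1,\dots,s\}\cup(\{s+1,\dots,n\}\setminus A)$. For strongly degenerate indices I expect ($p_7$) to yield lacunarity only at indices where the singularity is actually visible, so at least one of these is an index outside $A$ above $s$; this point needs care. So the nonlacunary situation is confined to tiles with $|I_P|\gtrsim |I_T|$. By sparseness ($p_3$),($p_4$) and the rank property ($p_5$) in the good direction, those tiles occupy only $O(1)$ scales and, for fixed scale and fixed $(P_{i_1},\dots,P_{i_k})$, are unique. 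Hence they form finitely many singletons, each trivially acceptable as a subtree (or treated by \eqref{eq:suplessthansize} directly). Collecting the pieces gives the finite splitting.
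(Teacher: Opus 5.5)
Your partition of $T$ by the overlapping set $A$ is the same as the paper's splitting into the pieces $T_{s+1}^{t_{s+1}}\cap\cdots\cap T_n^{t_n}$. Your rank argument for a good direction inside $A$ is correct: the removed row equals $-\sum_{j\in A}l_j$, because rows $1,\dots,s$ vanish and all rows sum to zero. It is a cleaner form of the paper's determinant computation.

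The gap is in the final step. You declare that indices $1,\dots,s$ never count as lacunary, and you expect $(p_7)$ to produce lacunarity only above $s$. From this you conclude that, in the pieces with $\sharp(\{s+1,\dots,n\}\setminus A)\le 1$, no tile with $|I_P|\ll|I_T|$ survives and only $O(1)$ singletons remain. That expectation is false. The mechanism behind $(p_7)$ is that the offset of $Q_P$ from $\Gamma'$ has size $\sim C_0|\omega_P|$ and almost zero coordinate sum. It is therefore large in at least two coordinates outside the good direction, and nothing prevents these from being strongly degenerate coordinates.

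Here is a concrete case. Take $\Gamma'_1=\mathrm{Span}((0,1,1,-2)^t)$, where $s=1$. Consider multitiles whose frequency cubes are centred at $t(0,1,1,-2)+(a,0,0,-a)$ with $|a|\sim C_0|\omega_P|$, at arbitrarily many separated scales, under a top of the same form. They are overlapping in $2$ and $3$ and lacunary exactly in $1$ and $4$. So $T_{\{2,3\}}$ has unboundedly many scales but only one lacunary index above $s$. Under your convention it cannot be cut into a bounded number of admissible pieces, and the $O(1)$-singletons conclusion fails. For $s\ge 2$, offsets $(a,-a,0,\dots,0)$ give the same phenomenon even for $A=\{s+1,\dots,n\}$.

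The missing idea is that lacunarity in a strongly degenerate index does count. The $j$-size is defined for every $j\in\{1,\dots,n\}$, as a supremum over $j$-lacunary trees that are overlapping in some index above $s$. The pieces at issue are overlapping in every index of $A$, which contains a good direction and so is nonempty. Hence Lemma \ref{lemmaprod} applies to a piece that is lacunary in, say, $1$ and $4$. With this convention, $(p_7)$ itself supplies two lacunary indices in $\{1,\dots,s\}\cup(\{s+1,\dots,n\}\setminus A)$ for every tile of $T_A$ at scale $\ll|I_T|$. This is how the paper concludes, and there is no need to show such tiles are absent.

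Since $(p_7)$ gives the two indices tile by tile, you should also refine $T_A$ according to which indices of $\{1,\dots,s\}$ are lacunary. By $(p_6)$, each tile of $T_A$ is either overlapping or lacunary there, so this still gives at most $2^n$ pieces. Your singleton treatment of the $O(1)$ comparable-scale tiles then handles what remains, a point the paper passes over.
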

\begin{proof}
 For each index $i=s+1, \dots, n$, $T$ can be split into a lacunary and an overlapping subtree, we denote them $T_i^{\text{lac}}$ and $T_i^{\text{ov}}$. Now 
 \[T=\bigcup_{i=s+1}^n \bigcup_{t_i=\text{ov}, \; t_i =\text{lac}} \left( T_{s+1}^{t_{s+1}} \cap \cdots \cap T_n^{t_n} \right).\]
 It is already clear that as soon as there exists two $t_i$ such that $t_i=\text{lac}$, $T_{s+1}^{t_{s+1}} \cap \cdots T_n^{t_n}$ has at least two lacunary indices.
 \\ $T_{s+1}^{\text{ov}}\cap \cdots \cap T_n^{\text{ov}}$ is a $(i_1, \dots, i_k)$-overlapping tree for any nondegenerate direction and therefore, by property ($p_7$) from Lemma \ref{lemmarank}, it has at least two lacunary indices. 
 \\ If now there is only one index, assume $n$, such that $t_n=\text{lac}$, then, we want to show that there exists a second index $i\neq n$ for which $t_i=\text{lac}$. It is enough to prove that in $\{s+1, \dots, n-1\}$, there exists a nondegenerate direction, as we can then conclude as above. 
 Assume by contradiction that for every  direction $(i_1, \dots, i_k)$, such that $\{i_1, \dots, i_k\} \subset \{s+1, \dots, n-1\}$, $(i_1, \dots, i_k)$ is degenerate. Then since $n$ is not a \textit{strongly degenerate index}, there exists a nondegenerate direction $(i_1, \dots, i_{k-1},n)$, with $i_1\geq s+1$. Thus, if we denote by $l_j$ the lines in the matrix formed by the spanning vectors of $\Gamma'$, we get: 
 \[ 0 \neq \text{det} \left( \begin{bmatrix} l_{i_1} \\ \vdots \\ l_{i_{k-1}} \\l_n \end{bmatrix}\right)=\text{det} \left( \begin{bmatrix}  l_{i_1} \\ \vdots \\ l_{i_{k-1}}  \\ -\sum_{j=s+1}^{n-1} l_j \end{bmatrix} \right)=0,\]
which is absurd. Here we used that since $\Gamma'\subset \Gamma$, $\displaystyle 0=\sum_{j=1}^n l_j =\sum_{j=s+1}^n l_j$, by the discussion in Section 2.
\\ This proves that there exists a nondegenerate direction in $\{s+1, \dots, n-1\}$ which is enough to conclude. 
\end{proof}
If we can estimate each size and estimate the sum over $|I_T|$ for some trees that cover our collection $\mathbb{P}$, then with Lemma \ref{lemmaprod} and \ref{lemmafinite} we would be able to get an estimate for $\Lambda_{\mathbb{P}}$. We state the results from \cite{MuscaluTaoThiele2002} to do so. 
\\ We first deal with estimates on the size.
\begin{lemma} \label{sizeestimate} Let $\mathbb{P}$ be a finite sparse collection of multitiles, $j=1,\dots, n$, then we have: 
     \[ \mathrm{size}_{\mathbb{P}, j}(f_j) \lesssim \sup_{P\in \mathbb{P}} \frac{1}{|I_P|}\|f_j \tilde{\chi}_{I_P}^N\|_1.\]
\end{lemma}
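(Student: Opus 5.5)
We fix an admissible tree $T$ realizing (up to a factor $2$) the supremum in the definition of $\mathrm{size}_{\mathbb{P},j}(f_j)$: $T$ is $j$-lacunary and $i$-overlapping for some $i\in\{s+1,\dots,n\}$, with top $P_T$. Since the size is defined as a supremum over such trees, it suffices to show that
\[ \Big(\frac{1}{|I_T|}\sum_{P_j\in T(j)}|\langle f_j,\phi^j_{P_j}\rangle|^2\Big)^{1/2}\lesssim \sup_{P\in\mathbb{P}}\frac{1}{|I_P|}\|f_j\tilde\chi^N_{I_P}\|_1 \]
uniformly in $T$. This is a local Calder\'on--Zygmund type statement: a lacunary family of wave packets behaves like a Littlewood--Paley square function localized to $I_T$.

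\textbf{Step 1: reduce to a square function.} Because $T$ is $j$-lacunary, every $P_j$ satisfies $P_j\lesssim' P_{T,j}$. Hence $\omega_{P_j}$ lies at distance $\sim|\omega_{P_j}|$ from the top frequency $\xi_T:=c(\omega_{P_{T,j}})$. By the scale separation ($p_3$), the distance separation ($p_4$), and the sparseness of the frequency intervals, for each dyadic scale and each spatial interval there are only $O(1)$ tiles in $T(j)$. After modulating $f_j$ by $e^{-2i\pi\xi_T x}$, the packets $\phi^j_{P_j}$ become $L^2$-normalized bumps adapted to $I_{P}$ with frequency support in annuli $|\xi|\sim|I_P|^{-1}$. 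Thus
\[ \sum_{P_j\in T(j)}|\langle f_j,\phi^j_{P_j}\rangle|^2\lesssim \Big\|\Big(\sum_{P\in T}\frac{|\langle f_j,\phi^j_{P_j}\rangle|^2}{|I_P|}\mathbb{1}_{I_P}\Big)^{1/2}\Big\|_{L^2}^2 , \]
and the square function $S_T f:=\big(\sum_{P\in T}|\langle f,\phi^j_{P_j}\rangle|^2|I_P|^{-1}\mathbb{1}_{I_P}\big)^{1/2}$ is bounded on $L^2$ and from $L^1$ to $L^{1,\infty}$, uniformly over the trees. This is the standard Littlewood--Paley argument, using almost orthogonality of the packets.

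\textbf{Step 2: localize and use John--Nirenberg.} The required bound is an $L^2$ (BMO-type) estimate controlled by $L^1$ averages, so we do not apply the $L^1$ bound directly. Instead we prove the weak estimate
\[ \frac{1}{|I_T|}\,\big|\{x\in I_T: S_T f_j(x)>\lambda A\}\big|\lesssim \lambda^{-1}, \qquad A:=\sup_{P\in\mathbb{P}}|I_P|^{-1}\|f_j\tilde\chi^N_{I_P}\|_1 , \]
uniformly for all subtrees of $T$, where the subtrees are obtained by restricting to tiles with $I_P\subset J$ for dyadic $J\subset I_T$. To prove it, split $f_j=f_j\tilde\chi_{I_T}+f_j(1-\tilde\chi_{I_T})$ in the usual smooth way. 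The local part is handled by the weak $(1,1)$ bound for $S_T$, with $\|f_j\tilde\chi_{I_T}\|_1\lesssim A|I_T|$; since the tops of the trees are themselves comparable to multitiles of $\mathbb{P}$, the relevant averages are of the form appearing in $A$. The tail part is handled by the decay $(1+\mathrm{dist}(x,I_P)/|I_P|)^{-M}$ of the packets, which lets us sum over annuli $2^{m+1}I_P\setminus 2^mI_P$, each dominated by $2^{-m(M-N)}A|I_P|$. A John--Nirenberg-type self-improvement over the dyadic subintervals $J\subset I_T$ (Lemma 4.2 of \cite{MuscaluTaoThiele2002}) then upgrades the weak-$L^1$ control to $L^2$ control: $\|S_Tf_j\|_{L^2(I_T)}\lesssim A|I_T|^{1/2}$. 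Combined with Step 1, this gives the claim.

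The main obstacle is Step 2, specifically making the John--Nirenberg iteration apply: subtrees of an admissible tree must remain admissible trees whose tops are comparable to tiles in $\mathbb{P}$, so that every local average arising in the argument is dominated by the supremum over $P\in\mathbb{P}$. The extra requirement that $T$ be $i$-overlapping for some non strongly degenerate $i$ is inherited by subtrees, and the rank property for nondegenerate directions is never used here. The degeneracy therefore does not affect this lemma, and the proof reduces to that of \cite{MuscaluTaoThiele2002}.
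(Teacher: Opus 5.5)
Your argument is essentially the paper's route: the paper does not reprove this lemma but cites the size lemma of \cite{MuscaluTaoThiele2002}, whose square function/weak-$(1,1)$/John--Nirenberg argument you are reconstructing. The paper makes the same observation you do: the degeneracy is irrelevant because only the projection $T(j)$ enters, and restricting the supremum to $j$-lacunary trees that are also $i$-overlapping can only decrease the size, so you do not need to check that subtrees remain admissible.

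Two details in your sketch should be corrected. First, the top $P_T$ need not be comparable to any multitile of $\mathbb{P}$, so the bound $\|f_j\tilde\chi_{I_T}\|_1\lesssim A|I_T|$ should come from splitting $T$ along the maximal intervals $I_P$, $P\in T$ (equivalently, taking the John--Nirenberg supremum over $P'\in T$). Second, the square function should be indexed by $T(j)$ rather than by multitiles, since $P_i$ and $P_j$ need not determine $P$ and the multiplicities can then be unbounded.
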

Notice that here the rank of the collection does not matter as the size is only considering the projections onto the $j$ coordinate. Furthermore in our definition of size, the supremum is taken over a more restrictive set of trees and therefore the estimate still holds. 
We now focus on estimating $\sum_{T}|I_T|$ as $T$ ranges over a specific collection that will soon be detailed. We only consider $j$ trees, for $j\in \left\{ s+1,\dots,n\right\}$ as we said earlier that they were the only directions we will need to focus on. 
\begin{definition}
   Let $j$ be in $\{s+1, \dots, n\}$. Let $T_1,\dots, T_m$ be $j$-lacunary trees, and respectively $i-1$,\dots, $i_m$-overlapping for indices $s<i_1, \dots, i_m \neq j$. We say that $T_1,\dots, T_m$ is a chain of strongly $j$-disjoint trees if:
   \begin{itemize}
       \item $P_j\neq P_j'$ if $P \in T_{l_1}$ and $P'\in T_{l_2}$, $l_1\neq l_2$;
       \item if $l_1\neq l_2$, $P\in T_{l_1}$, $P'\in T_{l_2}$ are such that $2 \omega_{P_j}\cap 2 \omega_{P'_j}\neq \emptyset$, then if $|\omega_{P_j}|<|\omega_{P_j'}|$, $I_{T_{l_1}}\cap I_{P'}=\emptyset $ and if $|\omega_{P_j}|>|\omega_{P_j'}|$, then $I_{T_{l_2}}\cap I_{P}=\emptyset $;
       \item if $l_1<l_2$, $P\in T_{l_1}$, $P'\in T_{l_2}$, $2 \omega_{P_j}\cap 2 \omega_{P_j'}\neq \emptyset$, then if $|\omega_{P_j}|=|\omega_{P_j'}|$, one has $I_{P'}\cap I_{T_{l_1}}=\emptyset$.
   \end{itemize}
\end{definition}
\begin{remark}
       If $\mathbb{P}$ is a collection of $j$-disjoint multitiles (i.e. the $j$-tiles, $P_j$, are disjoint), then it can be seen as a chain of strongly $j$ disjoint trees where each tree is a single multitile.
\end{remark}
\begin{definition}
   Let $j\in \left\{ s+1,\dots,n\right\}$, we call $j$-energy of the collection $\mathbb{P}$, the quantity:
   \[ \mathrm{energy}_j(\mathbb{P}):=\sup_{m\in \mathbb{Z}}\sup_{\mathbb{T}} 2^m \left( \sum_{T\in \mathbb{T}} |I_T| \right)^{1/2}\]
   where $\mathbb{T}\subset \mathbb{P}$ ranges over all chains of $j$-strongly disjoint trees such that for all $T\in \mathbb{T}$,
   \[ \left(\sum_{\substack{P_j \in T(j)}} |\langle f_j, \phi_{P_j}^j \rangle |^2\right)^{1/2} \geq 2^m|I_T|^{1/2}\]
   and for all subtrees $T' \subset T$, we have:
   \[\left(\sum_{\substack{P_j \in T'(j)}} |\langle f_j, \phi_{P_j}^j \rangle |^2\right)^{1/2} \leq 2^{m+1}|I_{T'}|^{1/2}.\]
\end{definition}
The energy quantity depends on the functions $f_j$ and the sequences $(\phi_{P_j}^j)_{P\in \mathbb{P}}$ but we allow ourselves to forget about them in the notation as they are fixed. 
\\ In the case of the bilinear Hilbert transform and more generally when $k=1$, the energy can be used to obtain a size-energy estimate of the multilinear form $\Lambda_{\mathbb{P}}$, see for example Proposition 6.12 from \cite{MuscaluSchlag2013}. Although the corresponding estimate is no longer available when $k>1$, the energy quantity remains useful in the inductive argument, where it is used in order to establish a less explicit estimate required in the proof of Theorem \ref{th1}.
\\ As in the size definition, we again consider $j$-lacunary trees that are also overlapping in another index. Again as this is a more restrictive condition, it does not impact the estimates and we get the following result: 
\begin{proposition} \label{energielocalisee}
   Assume that there exists a dyadic interval $I_0$ such that $I_P\subset I_0$ for all $P \in \mathbb{P}$ and let $j\in \left\{s+1,\dots,n\right\}$, then 
    \[ \mathrm{energy}_j(\mathbb{P})\lesssim\|f_j\tilde{\chi}_{I_0}^{M/2}\|_2.\]
\end{proposition}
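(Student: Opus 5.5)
Fix $m\in\mathbb{Z}$ and a chain $\mathbb{T}$ of strongly $j$-disjoint trees as in the definition of $\mathrm{energy}_j(\mathbb{P})$. The goal is
\[ 2^{2m}\sum_{T\in\mathbb{T}}|I_T| \lesssim \|f_j\tilde{\chi}_{I_0}^{M/2}\|_2^2 .\]
I would follow the standard $TT^*$ (Bessel-type) argument of \cite{MuscaluTaoThiele2002}. Only the $j$-th coordinates of the tiles enter the definitions of energy and of strong $j$-disjointness, so the degeneracy of $\Gamma'$ plays no role. The extra requirement that each tree be overlapping in some index $i>s$ only shrinks the class of admissible chains, so any bound proved for general chains of strongly $j$-disjoint $j$-lacunary trees applies here as well.

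\textbf{Step 1: duality.} Each tree satisfies $\sum_{P_j\in T(j)}|\langle f_j,\phi_{P_j}^j\rangle|^2\ge 2^{2m}|I_T|$. Summing over $T\in\mathbb{T}$ gives
\[ 2^{2m}\sum_T|I_T|\le \sum_T\sum_{P_j\in T(j)}|\langle f_j,\phi_{P_j}^j\rangle|^2=\Big\langle f_j,\sum_T\sum_{P_j\in T(j)} a_{P_j}\phi_{P_j}^j\Big\rangle ,\qquad a_{P_j}=\langle f_j,\phi_{P_j}^j\rangle .\]
Since every $I_P\subset I_0$, the wave packets decay away from $I_0$. Replacing $f_j$ by $f_j\tilde\chi_{I_0}^{M/2}$ therefore only costs a tail, and the tail is handled by the standard decay estimate $|\phi_P|\lesssim|I_P|^{-1/2}\tilde\chi_{I_P}^M$. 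By Cauchy--Schwarz it then suffices to prove
\[ \Big\|\sum_T\sum_{P_j\in T(j)}a_{P_j}\phi_{P_j}^j\Big\|_2^2\lesssim 2^{2m}\sum_T|I_T| .\]
This gives $2^{2m}\sum_T|I_T|\lesssim \|f_j\tilde\chi_{I_0}^{M/2}\|_2\,\big(2^{2m}\sum_T|I_T|\big)^{1/2}$, which is the claim after dividing.

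\textbf{Step 2: expanding the square.} Expanding the $L^2$ norm gives diagonal terms and off-diagonal terms.
\begin{itemize}
\item \emph{Diagonal terms} ($T=T'$). Because $T$ is $j$-lacunary, the $\phi_{P_j}^j$ are almost orthogonal, with Fourier supports separated scale by scale via sparseness ($p_3$), ($p_4$). Hence this contribution is $\lesssim\sum_{P_j\in T(j)}|a_{P_j}|^2\le 2^{2(m+1)}|I_T|$, by the upper bound in the definition applied to $T'=T$.
\item \emph{Off-diagonal terms.} Only pairs $P\in T$, $P'\in T'$ with $2\omega_{P_j}\cap2\omega_{P'_j}\ne\emptyset$ contribute. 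Pairs of equal frequency scale are handled by the third chain condition, which gives disjointness of $I_{P'}$ from $I_T$. Pairs with $|\omega_{P_j}|<|\omega_{P'_j}|$ are handled by the second condition, which gives $I_{P'}\cap I_T=\emptyset$.
\end{itemize}
For the unequal-scale pairs one bounds $|\langle\phi_{P_j},\phi_{P'_j}\rangle|$ by the decay factor $(1+\mathrm{dist}(I_P,I_{P'})/|I_{P'}|)^{-M}(|I_{P'}|/|I_P|)^{1/2}$. One then uses the size control $|a_{P'_j}|\lesssim 2^m|I_{P'}|^{1/2}$ for the finer-scale (larger frequency) tiles, which comes from the upper bound applied to the one-tile subtree $T'=\{P'\}$. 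For fixed $P$, summing over the $P'$ outside $I_T$ then gives a Carleson-type bound. This yields a contribution $\lesssim 2^m|a_{P_j}|\,|I_P|^{1/2}$ times a geometric factor, and summing over $P\in T$ with Cauchy--Schwarz and the bound for $T$ gives $\lesssim 2^{2m}|I_T|$.

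\textbf{Main obstacle.} I expect the hardest part to be the off-diagonal sum. The $P'$ come from many different trees $T'$, and one must check that the disjointness conditions make the relevant intervals $I_{P'}$ at each scale essentially disjoint and lying outside $I_T$, so that the sum over $P'$ is controlled by $|I_T|$ and not by $\sum_{T'}|I_{T'}|$. I would handle this by ordering pairs so that the tree containing the coarser-frequency tile absorbs the sum, exactly as in the proof of the corresponding energy lemma in \cite{MuscaluTaoThiele2002}. The argument there is one-dimensional in the $j$ variable and transfers verbatim here.
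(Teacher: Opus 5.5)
Your overall route (duality, then a Bessel/$TT^*$ bound whose diagonal and off-diagonal terms are controlled by the chain conditions) is the one the paper relies on. The paper gives no proof and defers to \cite{MuscaluTaoThiele2002} and \cite{BeneaMuscalu2021}. Your Step 2 is essentially the standard unlocalized lemma $\mathrm{energy}_j(\mathbb{P})\lesssim\|f_j\|_2$.

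The gap is in Step 1, i.e.\ in the localization, which is exactly what this proposition adds. Split $f_j=f_j\tilde{\chi}_{I_0}^{M/2}+f_j(1-\tilde{\chi}_{I_0}^{M/2})$. The second piece is not a tail in any usable sense. It is not controlled by $\|f_j\tilde{\chi}_{I_0}^{M/2}\|_2$ unless you write it as $\langle f_j\tilde{\chi}_{I_0}^{M/2},(\tilde{\chi}_{I_0}^{-M/2}-1)g\rangle$ with $g=\sum a_{P_j}\phi^j_{P_j}$. You then need $\|\tilde{\chi}_{I_0}^{-M/2}g\|_2\lesssim(2^{2m}\sum_T|I_T|)^{1/2}$, which is a weighted Bessel inequality.

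The pointwise bound $|\phi^j_{P_j}|\lesssim|I_P|^{-1/2}\tilde{\chi}_{I_P}^M$ cannot give this. With absolute values inside the sum you get $\sum_P|a_{P_j}|$ instead of $(\sum_P|a_{P_j}|^2)^{1/2}$. Moreover, the number of $j$-tiles sharing one spatial interval is unbounded (e.g.\ one-tile trees with $I_P=I_0$ and different frequency intervals). Concretely, if $f_j$ is supported at distance $R|I_0|$ from $I_0$, your tail is all of $\langle f_j,g\rangle$, and the required gain $R^{-M/2}$ needs orthogonality.

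The fix is to move the weight onto the packets from the start: $a_{P_j}=\langle f_j\tilde{\chi}_{I_0}^{M/2},\tilde{\phi}_{P_j}\rangle$ with $\tilde{\phi}_{P_j}:=\tilde{\chi}_{I_0}^{-M/2}\phi^j_{P_j}$. Since $I_P\subset I_0$ gives $\tilde{\chi}_{I_P}\lesssim\tilde{\chi}_{I_0}$, these satisfy $|\tilde{\phi}_{P_j}|\lesssim|I_P|^{-1/2}\tilde{\chi}_{I_P}^{M/2}$ with the same derivative bounds. So the energy of $f_j$ for $(\phi^j_{P_j})$ equals the energy of $f_j\tilde{\chi}_{I_0}^{M/2}$ for $(\tilde{\phi}_{P_j})$, and you run Step 2 for the new family. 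You must still check that the off-diagonal analysis survives. The function $\tilde{\phi}_{P_j}$ keeps Fourier support in $\omega_{P_j}$ only if the weight is a polynomial (e.g.\ $\tilde{\chi}_I(x)=(1+(x-c_I)^2/|I|^2)^{-1/2}$ with $M/4\in\mathbb{N}$). Otherwise the claim that only pairs with $2\omega_{P_j}\cap 2\omega_{P'_j}\neq\emptyset$ contribute must be replaced by rapid decay in the frequency separation.

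Your off-diagonal inner-product estimate is also wrong as stated. For $|I_{P'}|<|I_P|$ the inner product decays only at the coarse scale:
\[|\langle\phi^j_{P_j},\phi^j_{P'_j}\rangle|\lesssim\left(\frac{|I_{P'}|}{|I_P|}\right)^{1/2}\left(1+\frac{\mathrm{dist}(I_P,I_{P'})}{|I_P|}\right)^{-M}.\]
This is sharp: if $\omega_{P_j}$ lies in the bulk of $\omega_{P'_j}$, the envelope of $\phi^j_{P_j}$ is essentially constant on $I_{P'}$. With the correct bound, for fixed $P\in T$ each smaller spatial scale can contribute as much as $2^m|I_P|^{-1/2}\int_{I_T^c}\tilde{\chi}_{I_P}^M$, with no geometric decay in the scale. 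So disjointness of the $I_{P'}$ scale by scale is not enough. You need all relevant $I_{P'}$, over all smaller scales and all trees $T'\neq T$, to be pairwise disjoint and disjoint from $I_T$. This is where the second chain condition is genuinely used, applied to pairs $P'$, $P''$ from different trees: their doubled frequency intervals meet because both meet the much shorter $\omega_{P_j}$. Lacunarity inside a single tree handles pairs from the same tree. Your too-strong decay estimate hides this step.
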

This is obtained in a different form in \cite{MuscaluTaoThiele2002} and detailed in \cite{BeneaMuscalu2021} via a careful Bessel type argument making use of the quasi orthogonality of the wave packets whenever the tiles are in two different trees that are strongly disjoint. 
\begin{remark}
In the following section, we will make the additional assumption that there exist $\lambda>0$ and $a_j>0$ such that:
\[\|f_j\|_2 \lesssim a_j,\]
\[ \|f_j \tilde{\chi}_{I_0}^{M/2}\|_2 ^2\lesssim \lambda|I_0| a_j^2.\]
Combining these with Proposition \ref{energielocalisee}, we then  get the energy estimate: 
\begin{equation} \label{eq:energielocalisee}\mathrm{energy}_j^2(\mathbb{P})\lesssim \min(a_j^2,a_j^2 \lambda |I_0|) \lesssim a_j^2(1+\lambda)\min(1, |I_0|) \end{equation}
where for the first case we estimated $\|f_j\tilde{\chi_{I_0}^{M/2}}\|_2$ by $\|f_j\|_2$ and we used the assumption above for the second estimate. 
Later on we will use this particular bound and we will split our collection of multitiles depending on such $\lambda$.
\end{remark}

We are now left with splitting the collection $\mathbb{P}$ into levels of size, to be able to estimate $\sum_{I_T}|I_T|$ by the energy of the sequence. For readability purposes we denote:
\[ S_j=\mathrm{size}_{\mathbb{P}, j}(f_j), \;\: j=1,\dots,n\]
\[ E_j=\mathrm{energy}_j(\mathbb{P}), \;\: j=s+1,\dots,n.\]
\begin{lemma}\label{lemmedec1}
    Let $s+1 \leq j\leq n$ and $\mathbb{P}'\subset \mathbb{P}$ such that:
    \[ \mathrm{size}_{\mathbb{P}', j}(f_j) \leq 2^{-m}E_j\]
    then one can split $\mathbb{P}'=\mathbb{P}''\cup \mathbb{P}'''$ such that  
    \[  \mathrm{size}_{\mathbb{P}''',j}(f_j) \leq 2^{-m-1}E_j\]
    and $ \displaystyle\mathbb{P}''=\bigcup_{T \in \mathbb{T}}T$ where $\mathbb{T}$ is a collection of $j$-trees, which are overlapping in an index $i\in \{s+1, \dots, n\}$ such that: 
    \[ \sum_{T\in \mathbb{T}}|I_T| \lesssim 2^{2m}.\]
\end{lemma}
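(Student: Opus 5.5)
This is the standard tree-selection (greedy) argument of \cite{MuscaluTaoThiele2002}, with the extra requirement that every selected tree be overlapping in some index $i\in\{s+1,\dots,n\}$. The plan is to run an iterative selection on $\mathbb{P}'$.

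\textbf{Selection step.} As long as $\mathrm{size}_{\mathbb{P}',j}(f_j)>2^{-m-1}E_j$, by definition of the size there is a $j$-lacunary tree $T$ in the current collection, overlapping in some index $i\in\{s+1,\dots,n\}$, with
\[
\Big(\sum_{P_j\in T(j)}|\langle f_j,\phi^j_{P_j}\rangle|^2\Big)^{1/2}\ge 2^{-m-1}E_j|I_T|^{1/2}.
\]
Among all such trees, I choose one that is extremal in frequency. Following \cite{MuscaluTaoThiele2002}, this means the center of $\omega_{T,j}$ is maximal, or equivalently minimal, according to the usual upper/lower convention.

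\textbf{Removal step.} I then remove from the collection the maximal $i$-overlapping tree $\tilde T$ with the same top, that is, all $P$ with $P_i\le P_{T,i}$. Since $i>s$ is not strongly degenerate, there is a nondegenerate direction containing $i$. The tree $\tilde T$ is $i$-overlapping, and I want it to also be a $j$-tree with $P_j\lesssim P_{T,j}$. I expect to get this from $(p_6)$ of Lemma \ref{lemmarank}, after possibly enlarging the top using the analogue of $(p_6)$ for $\lesssim$. I record $\tilde T$ as an element of $\mathbb{T}$; it is a $j$-tree overlapping in $i$, as the statement requires. I iterate until the remaining collection $\mathbb{P}'''$ has size at most $2^{-m-1}E_j$; termination is immediate because $\mathbb{P}$ is finite.

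\textbf{Counting step.} This is where the bound on $\sum|I_T|$ comes from. The lacunary trees $T_1,\dots,T_N$ selected this way are pairwise $j$-disjoint, because tiles are removed along the way. The extremal frequency ordering, combined with the separation properties $(p_3)$, $(p_4)$ and the Whitney property, is what should make them a chain of strongly $j$-disjoint trees; this is the standard geometric argument of \cite{MuscaluTaoThiele2002}. Each $T_l$ satisfies the lower bound with $2^{m'}:=2^{-m-1}E_j$. Every subtree of $T_l$ satisfies the upper bound $2^{m'+1}|I_{T'}|^{1/2}$, since the size on $\mathbb{P}'$ is at most $2^{-m}E_j=2^{m'+1}$; here I round to a dyadic value of $E_j$, at the cost of harmless constants. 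The definition of energy then gives
\[
2^{-m-1}E_j\Big(\sum_l|I_{T_l}|\Big)^{1/2}\le E_j,
\]
so $\sum_l|I_{T_l}|\lesssim 2^{2m}$. Since $I_{\tilde T_l}=I_{T_l}$, this is exactly the required bound.

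\textbf{Expected obstacle.} The delicate point is verifying the strong disjointness conditions, particularly the third one. In the nondegenerate setting this relies on the rank property in all indices. Here the overlapping index $i$ varies from tree to tree, and only nondegenerate directions through $i$ are available. I would handle this by using only property $(p_6)$ along a good direction containing $i$. From it I would deduce that any tile $P'$ of a later tree with $2\omega_{P'_j}\cap2\omega_{P_j}\neq\emptyset$ and $I_{P'}\cap I_{T_l}\neq\emptyset$ would have been absorbed into the earlier $\tilde T_l$. The extremal choice of frequency then rules out the remaining configuration, exactly as in the classical argument.
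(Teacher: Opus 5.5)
Your overall scheme (greedy selection of $j$-lacunary, $i$-overlapping trees with extremal top frequency, then strong $j$-disjointness and the definition of energy) is what the paper has in mind. However, your removal step removes the wrong tree, and the argument breaks there.

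First, for $k\ge 2$ the set $\tilde T=\{P:\ P_i\le P_{T,i}\}$ is not a $j$-tree. Property $(p_6)$ requires $P_{i_r}\le P_{T,i_r}$ for \emph{all} $k$ indices of a nondegenerate direction. Overlap in the single index $i$ only cuts the collection down to a rank $k-1$ family, as the paper notes just before defining $(i_1,\dots,i_k)$-trees, and such a family does not in general localize $\omega_{P_j}$ near $\omega_{P_{T,j}}$. No enlargement of the top repairs this, so your recorded trees do not satisfy the conclusion of the lemma.

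Second, already for $k=1$, removing $\tilde T_l$ does not give strong $j$-disjointness. Let $P\in T_l$, and let $P'$ lie in a later tree with $2\omega_{P_j}\cap 2\omega_{P'_j}\neq\emptyset$, $|\omega_{P_j}|<|\omega_{P'_j}|$ and $I_{P'}\cap I_{T_l}\neq\emptyset$. Then $I_{P'}\subsetneq I_{T_l}$. Moreover $\omega_{P_{T_l,j}}\subset C_1C_0\,\omega_{P_j}$ and $|\omega_{P'_j}|\ge C|\omega_{P_j}|$ with $C\gg C_1C_0$, so $\omega_{P_{T_l,j}}\subset 3\omega_{P'_j}$, i.e.\ $P'_j<P_{T_l,j}$. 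So the tile that should have been removed at step $l$ is $j$-\emph{overlapping} for the top of $T_l$. Nothing in the hypotheses controls $P'_i$, and in general $P'_i\not\le P_{T_l,i}$: for $n=3$, $k=1$, property $(p_7)$ makes such a $P'$ lacunary in both indices other than $j$. Hence $P'\notin\tilde T_l$; it survives, can be selected later, and the chain is not strongly $j$-disjoint, so the energy bound cannot be invoked. Your appeal to $(p_6)$ also runs in the wrong direction: it passes from overlap along a whole good direction to $\lesssim$ in every coordinate, never to overlap in a prescribed index.

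At each step you should remove and record two trees with the same top:
\begin{itemize}
\item the selected tree $T$ itself, which is $j$-lacunary (hence a $j$-tree) and $i$-overlapping;
\item the $j$-overlapping tree $\{P:\ P_j\le P_{T,j}\}$, which is a $j$-tree overlapping in the index $j\in\{s+1,\dots,n\}$.
\end{itemize}
This is what the paper means by asking that ``even the $j$-lacunary trees in $\mathbb{T}$ are overlapping in another direction''. With this removal, the computation above excludes the case where the tile with the smaller frequency interval lies in the earlier tree. The extremal choice of the top frequency, within upper (resp.\ lower) trees, excludes the reverse case. Since both recorded trees have top interval $I_{T_l}$, you get $\sum_{T\in\mathbb{T}}|I_T|\le 2\sum_l|I_{T_l}|\lesssim 2^{2m}$.

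Finally, be careful with ``pairwise $j$-disjoint because tiles are removed''. Removing multitiles only makes the $T_l$ disjoint as sets of multitiles. For $k\ge 2$, distinct multitiles can share a $j$-tile, since by $(p_5)$ only $k$ coordinates along a good direction determine $P$. So neither the condition $P_j\neq P'_j$ nor the equal-scale chain condition is automatic; each needs its own argument.
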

This is again a result from \cite{MuscaluTaoThiele2002}, in a different form.
The idea is essentially to remove the trees that have “large” size from $\mathbb{P}'$ and put them in $\mathbb{P}''$. Since we want to be able to estimate $\sum_{T \in \mathbb{P}''} |I_T|$, we try to construct a chain of strongly $j$-disjoint trees. 
\\ Usually, a tree in $\mathbb{T}$ is either $j$-lacunary or $j$-overlapping. Here, we ask that even the $j$-lacunary trees in $\mathbb{T}$ are overlapping in another direction. This will be important in the proof of Theorem \ref{th1}, as in this case any tree $T \in \mathbb{T}$ is of rank $k-1$. 
\\ By iterating the last lemma we get the following proposition. 
\begin{lemma} \label{lemmedec2}
   Let $s+1\leq j\leq n$, then there exists $m_0 \in \mathbb{Z}$ such that:
   \[2^{-m_0}E_j \geq S_j \geq 2^{-m_0-1}E_j \]
Furthermore one can decompose 
   \[ \mathbb{P}=\bigcup_{m \geq m_0} \mathbb{P}^{j}_m\]
   such that when $\mathbb{P}_{m}^j\neq \emptyset$: 
   \[ 2^{-m-1}E_j\leq \mathrm{size}_{\mathbb{P}_m^j, j}(f_j) \leq  \min (2^{-m}E_j,S_j)\]
   and one can cover $\displaystyle \mathbb{P}_m^j=\bigcup_{T\in \mathbb{T}_m^j} T$ where $\mathbb{T}_m^j$ is a collection of overlapping trees such that 
   \[ \sum_{T\in \mathbb{T}_m^j}|I_T| \lesssim 2^{2m}.\]
\end{lemma}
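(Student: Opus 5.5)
The plan is to iterate Lemma \ref{lemmedec1}, starting from a level where the size hypothesis holds trivially and halving the size threshold at each step.

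\textbf{Choosing $m_0$.} By the definition of energy, any single $j$-lacunary tree $T$ that is $i$-overlapping for some $i>s$ satisfies
\[ \Big(\frac{1}{|I_T|}\sum_{P_j\in T(j)}|\langle f_j,\phi^j_{P_j}\rangle|^2\Big)^{1/2}\lesssim E_j .\]
To see this, pick $m$ with $2^m\le$ the left-hand side $<2^{m+1}$, take a maximal such tree (or a subtree realizing the stopping condition), and regard it as a one-element chain of strongly $j$-disjoint trees. Hence $S_j\lesssim E_j$. Normalizing constants, or working with $E_j$ replaced by $CE_j$, we get $S_j\le E_j$. If $S_j=0$ the statement is trivial: put everything in one level. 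Otherwise define $m_0$ as the integer with $2^{-m_0-1}E_j\le S_j\le 2^{-m_0}E_j$; since $S_j\le E_j$ we have $m_0\ge 0$.

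\textbf{Iteration.} Set $\mathbb{P}^{(m_0)}:=\mathbb{P}$, so that $\mathrm{size}_{\mathbb{P}^{(m_0)},j}(f_j)\le 2^{-m_0}E_j$. Inductively, given $\mathbb{P}^{(m)}$ with size at most $2^{-m}E_j$, apply Lemma \ref{lemmedec1} to split
\[\mathbb{P}^{(m)}=\mathbb{P}''_m\cup\mathbb{P}^{(m+1)},\]
where $\mathrm{size}_{\mathbb{P}^{(m+1)},j}\le 2^{-m-1}E_j$ and $\mathbb{P}''_m$ is covered by trees overlapping in some index $i>s$ with $\sum|I_T|\lesssim 2^{2m}$. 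Set $\mathbb{P}^j_m:=\mathbb{P}''_m$.

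Since $\mathbb{P}$ is finite and $\sup_P|I_P|^{-1/2}|\langle f_j,\phi_{P_j}^j\rangle|\le$ size by \eqref{eq:suplessthansize}, the sizes eventually drop below the smallest nonzero coefficient. At that point the remaining tiles have zero coefficient, so they contribute nothing to $\Lambda_{\mathbb{P}}$ and can be discarded or absorbed into a last level. This gives $\mathbb{P}=\bigcup_{m\ge m_0}\mathbb{P}^j_m$.

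\textbf{Size bounds.} For the upper bound, $\mathbb{P}^j_m\subset\mathbb{P}^{(m)}$ and $\mathbb{P}^j_m\subset\mathbb{P}$. Size is monotone under inclusion because the supremum runs over fewer trees, so $\mathrm{size}_{\mathbb{P}^j_m,j}\le\min(2^{-m}E_j,S_j)$.

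The lower bound $\ge 2^{-m-1}E_j$ for nonempty $\mathbb{P}^j_m$ is the main obstacle, since Lemma \ref{lemmedec1} as stated does not guarantee it. I would refine the selection inside its proof. The trees in $\mathbb{P}''_m$ are selected precisely because they are $j$-lacunary trees with normalized energy greater than $2^{-m-1}E_j$, together with their overlapping completions. Every nonempty $\mathbb{P}''_m$ therefore contains a tree witnessing size above $2^{-m-1}E_j$, which gives the lower bound on $\mathrm{size}_{\mathbb{P}^j_m,j}$.

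If the construction also puts in tiles whose own size is small, I would instead move those tiles to the next level. Alternatively, I would accept the lower bound in the weaker form: whenever $\mathbb{P}^j_m$ is nonempty, the size of $\mathbb{P}^{(m)}$ exceeds $2^{-m-1}E_j$. That weaker form is all that is used later.
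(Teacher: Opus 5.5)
Your route is the paper's. There, Lemma \ref{lemmedec2} is obtained simply by iterating Lemma \ref{lemmedec1} starting from $\mathbb{P}^{(m_0)}=\mathbb{P}$. Your treatment of three points correctly fills in what the paper leaves implicit: monotonicity of the size under inclusion, termination by discarding tiles with $\langle f_j,\phi^j_{P_j}\rangle=0$ (they contribute nothing to the form), and the lower size bound via the selection rule inside Lemma \ref{lemmedec1}.

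\textbf{A false claim in your first step.} You assert $S_j\lesssim E_j$, and hence $m_0\ge 0$. Nothing later depends on this, but it is wrong.
\begin{itemize}
\item Viewing one tree $T$ as a one-element chain only gives $E_j\ge 2^m|I_T|^{1/2}$, not $E_j\gtrsim 2^m$. The size carries the normalization $|I_T|^{-1/2}$ and the energy does not.
\item A concrete counterexample: take $\mathbb{P}=\{P\}$ and $f_j=\phi^j_{P_j}$ with $\|\phi^j_{P_j}\|_2=1$. Then \eqref{eq:suplessthansize} gives $S_j\ge|I_P|^{-1/2}$. Proposition \ref{energielocalisee} with $I_0=I_P$ gives $E_j\lesssim\|f_j\|_2=1$. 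So $S_j/E_j$ is unbounded as $|I_P|\to0$.
\item This is why the lemma only claims $m_0\in\mathbb{Z}$, and why $\sum_T|I_T|\lesssim 2^{2m}$ is meaningful for negative $m$: the factor $2^{-m}$ carries units of $|I|^{-1/2}$.
\item The fix is to define $m_0$ directly by $2^{-m_0-1}E_j\le S_j\le 2^{-m_0}E_j$. This is possible because $S_j>0$ forces $0<E_j<\infty$.
\end{itemize}

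\textbf{The zero-coefficient tiles.} Of your two options, only discarding them is safe. Absorbing them into some level would require covering them by trees within that level's budget $2^{2m}$. It could also produce a level whose size is $0<2^{-m-1}E_j$, violating the lower bound.
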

\begin{remark} Notice that since we are considering $m\geq m_0$, we have: 
\[ 2^{-m} E_j \leq 2^{-m_0}E_j \leq 2S_j.\]
We will later on ignore the factor 2 as it doesn't impact the reasoning.
\end{remark}
For some technical purposes we state a similar yet more precise result which will be used in the proof of Theorem \ref{th2}.
\begin{lemma} \label{lemmedec3}
   Let $s+1\leq j\leq n$, then there exists $m_0 \in \mathbb{Z}$ such that:
   \[2^{-m_0} \geq S_j \geq 2^{-m_0-1} \]
Furthermore one can decompose 
   \[ \mathbb{P}=\bigcup_{m \geq m_0} \mathbb{P}^{j}_m\]
   such that  whenever $\mathbb{P}_m^j\neq \emptyset$
   \[ \mathrm{size}_{\mathbb{P}_m^j, j}(f_j) \sim 2^{-m}\]
   and one can cover $\displaystyle \mathbb{P}_m^j=\bigcup_{T\in \mathbb{T}_m^j} T$ where $\mathbb{T}_m^j$ is a collection of trees containing a chain of strongly $j$-disjoint trees $T_1, \dots, T_M$ such that for $i=1, \dots, M$: 
\[\mathrm{size}_{T_i,j}(f_j) \sim 2^{-m},\]
and
\[ \sum_{T \in \mathbb{T}_{m}^j} |I_{T}| \lesssim \sum_{i=1}^M |I_{T_i}|.\]
\end{lemma}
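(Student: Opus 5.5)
The plan is to run the same greedy tree-selection algorithm that underlies Lemma \ref{lemmedec1} and Lemma \ref{lemmedec2}, but to record more precisely the trees removed at each stage. Normalise so that $E_j$ is absorbed; equivalently, work directly with the dyadic size levels. Choose $m_0$ with $2^{-m_0}\geq S_j\geq 2^{-m_0-1}$, which is possible since $S_j<\infty$ for the finite collection $\mathbb{P}$. We then proceed by induction on $m\geq m_0$.

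At stage $m$ we have a remaining collection $\mathbb{P}'$ with $\mathrm{size}_{\mathbb{P}',j}(f_j)\leq 2^{-m}$. While there is a $j$-lacunary tree $T\subset\mathbb{P}'$, overlapping in some index $i\in\{s+1,\dots,n\}$, with
\[
\Big(\frac{1}{|I_T|}\sum_{P_j\in T(j)}|\langle f_j,\phi^j_{P_j}\rangle|^2\Big)^{1/2}\geq 2^{-m-1},
\]
we select one that is extremal in the usual sense of \cite{MuscaluTaoThiele2002}: its top frequency is extremal (say lowest) in the ordering of $\omega_{T,j}$ among all candidates, with maximal top interval. We record this tree as $T_\ell$. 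We then remove from $\mathbb{P}'$ the full tree $\widetilde T_\ell$, consisting of all multitiles $P\in\mathbb{P}'$ with $P_j\lesssim P_{T_\ell,j}$ (enlarged by the overlapping index as in Lemma \ref{lemmedec1}), and place $\widetilde T_\ell$ into $\mathbb{T}^j_m$. The process stops once no such tree remains. The leftover collection then has size at most $2^{-m-1}$ and is passed to stage $m+1$.

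By construction each selected $T_\ell$ has size at least $2^{-m-1}$ and at most $2^{-m}$, since every subtree lies in $\mathbb{P}'$. This gives $\mathrm{size}_{T_\ell,j}(f_j)\sim 2^{-m}$, and hence $\mathrm{size}_{\mathbb{P}^j_m,j}\sim 2^{-m}$ whenever $\mathbb{P}_m^j$ is nonempty. The covering estimate $\sum_{\mathbb{T}^j_m}|I_{\widetilde T}|\lesssim\sum_\ell |I_{T_\ell}|$ holds because each removed tree $\widetilde T_\ell$ is built from a single selected tree. Its top has the same spatial interval $I_{T_\ell}$, up to a fixed number of adjacent tops (a bounded multiplicity coming from the overlapping-index enlargement and from the constants in $\lesssim$). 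So each selected $T_\ell$ contributes boundedly many covering trees, each of comparable length.

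The main obstacle is verifying that $T_1,\dots,T_M$ form a chain of strongly $j$-disjoint trees. Disjointness of $j$-tiles is immediate, since selected tiles are removed. The interval conditions are what require care. Suppose $P\in T_{l_1}$, $P'\in T_{l_2}$ with $2\omega_{P_j}\cap 2\omega_{P'_j}\neq\emptyset$ and $|\omega_{P_j}|<|\omega_{P'_j}|$. If $I_{P'}$ met $I_{T_{l_1}}$, then by lacunarity, the sparseness properties ($p_3$) and ($p_4$), and the extremal choice of frequencies, $P'$ would satisfy $P'_j\lesssim P_{T_{l_1},j}$. It would therefore have been absorbed into $\widetilde T_{l_1}$ if $l_1<l_2$, or would contradict the extremality of $T_{l_2}$ if $l_2<l_1$. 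The equal-scale case is handled using ($p_4$). This is precisely the geometric argument of \cite{MuscaluTaoThiele2002}. The only point to check here is that the restriction to trees overlapping in some index of $\{s+1,\dots,n\}$ is preserved throughout, which holds because the removal step preserves that property.
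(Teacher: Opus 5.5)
Your outline is the standard greedy tree selection of \cite{MuscaluTaoThiele2002, BeneaMuscalu2021}. The paper gives no separate proof of this lemma and simply relies on that argument. Removing the whole $j$-tree $\widetilde T_\ell=\{P\in\mathbb{P}' : P_j\lesssim P_{T_\ell,j}\}$ is the right choice in this setting.

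\textbf{The gap.} The case $l_2<l_1$ of the interval condition does not go through with a single ``lowest top frequency first'' rule. Write $\xi_T$ for the centre of $\omega_{T,j}$. Take $P\in T_{l_1}$ and $P'\in T_{l_2}$ with $2\omega_{P_j}\cap 2\omega_{P'_j}\neq\emptyset$, $|\omega_{P_j}|<|\omega_{P'_j}|$ and $I_{P'}\subset I_{T_{l_1}}$.
\begin{itemize}
\item Scale separation puts $\xi_{T_{l_1}}$ in a dilate of $\omega_{P'_j}$ only slightly larger than $2\omega_{P'_j}$.
\item Lacunarity of $T_{l_2}$ puts $\xi_{T_{l_2}}$ beyond that dilate, on one side or the other.
\item If $\xi_{T_{l_2}}$ lies above $\omega_{P'_j}$, then $\xi_{T_{l_2}}>\xi_{T_{l_1}}$. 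Since $T_{l_1}$ was still a candidate at step $l_2$, this contradicts the lowest-first choice, as you want.
\item If $\xi_{T_{l_2}}$ lies below, then $\xi_{T_{l_2}}<\xi_{T_{l_1}}$, which is perfectly consistent with $T_{l_2}$ being chosen first. Nothing removes $P$ at step $l_2$, since $P_j\not\lesssim P_{T_{l_2},j}$ because $|\omega_{P_j}|\ll|\omega_{P'_j}|$.
\end{itemize}
So the selected trees need not form a chain.

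\textbf{The fix} is the usual upper/lower splitting.
\begin{itemize}
\item Restrict candidates to lacunary trees whose $j$-frequencies all lie below (respectively above) the top frequency. Some half of any tree with average at least $2^{-m-1}$ has average at least $2^{-m-1}/\sqrt2$. Hence the stopping rule and $\mathrm{size}_{T_i,j}(f_j)\sim 2^{-m}$ survive.
\item Select lowest-first among the first kind and highest-first among the second.
\item Take for $T_1,\dots,T_M$ whichever of the two resulting chains has the larger $\sum|I_{T_i}|$. This costs a factor $2$ in the final estimate.
\end{itemize}

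\textbf{Two smaller corrections.}

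First, disjointness of the $j$-tiles does not follow from removing the selected multitiles. Here the $j$-tile alone does not determine the multitile, so distinct multitiles can share a $j$-tile. It holds because you remove the whole $j$-tree: if $P'_j=P_j$ with $P\in T_{l_1}$ and $l_1<l_2$, then $P'\in\widetilde T_{l_1}$. The same observation together with ($p_4$) gives the equal-scale condition.

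Second, drop the ``enlargement by the overlapping index'' and the bounded-multiplicity remark. Since $\widetilde T_\ell$ has top $P_{T_\ell}$, you get $\sum_\ell|I_{\widetilde T_\ell}|=\sum_\ell |I_{T_\ell}|$ exactly. Adding a set such as $\{P : P_i\le P_{T_\ell,i}\}$ would, for $k>1$, produce a collection that is not a $j$-tree. Proposition \ref{thm19} needs every tree of $\mathbb{T}^j_m$ to be a $j$-tree.
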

We now have all the tools in hand to prove Theorems \ref{th1} and \ref{th2}.

\section{$L^p$ bounds for singularities with a large nondegenerate set of indices}
We are now ready to prove Theorem \ref{th1}, which will be deduced from some interpolation results. We first recall some definitions for interpolation and explain how the main result is obtained. We then detail a key estimate which is used in the proof of the main result. 
Before diving more closely into the proof, we first recall the context of Theorem \ref{th1}. We consider a symbol $m \in \mathcal{C}^\alpha(\Gamma \backslash \Gamma')$,
for enough $\alpha$ with $\Gamma' $ a subvector space of dimension $k$, such that there exists $l \in \mathbb{N}$ with $n-l> 2k$ and $G \subset \{1, \dots, n\}$, with $\sharp G=n-l$, such that any direction $(i_1, \dots, i_k)$ with $i_1, \dots, i_k \in G$ is nondegenerate. 
\\ For readability purposes, we will assume that $G=\{l+1, \dots, n\}$. 
The only part where this choice of $G$ has an impact is the interpolation as the results will depend on whether the last index $n\in G$ or not. We will detail it whenever it is necessary.  
\\ In particular with our choice of $G$, it implies that the collection $\mathbb{P}$ for the discretized model is of rank $k$ with respect to $\{ l+1, \dots, n\}$. 
\subsection{Short reminders on interpolation}
We state some multilinear interpolation results from \cite{MuscaluTaoThiele2002} with the small difference being that the domains we interpolate between are not symmetric since we have to deal with degenerate and nondegenerate directions. 
We first begin with some definitions. 
\begin{definition}
    A tuple $\alpha=(\alpha_1,\dots,\alpha_n)$ is called admissible if for all $i=1,\dots,n$: 
    \[ -\infty<\alpha_i<1,\]
     \[ \sum_{i=1}^n\alpha_i=1\]
     and there is at most one index $j$ such that $\alpha_j<0$. If it is the case, then we say that the tuple $\alpha$ is bad with bad index $j$ and we call the other indices good. If there is no such index, we say that $\alpha$ is a good tuple. 
\end{definition}
\begin{definition}
    Let $F$, $F'$ be two finite measure subsets of $\mathbb{R}$ such that $F'\subset F$. We say that $F'$ is a major subset of $F$ if 
    \[ |F'|\geq \frac{1}{2}|F|.\]
\end{definition}
\begin{definition}
    Let $F\subset \mathbb{R}$ be of finite measure. We denote by $X(F)$, the space of all complex-valued functions $f$ supported on $F$ such that $\|f\|_\infty\leq 1$.
\end{definition}
\begin{definition}
    Let $\alpha=(\alpha_1,\dots,\alpha_n)$ be an admissible tuple. We say that a $n$-linear form $\Lambda$ is of restricted type $\alpha$ if for every $F_1,\dots, F_n$ subsets of $\mathbb{R}$ of finite measure, there exists a major subset $F_j'$ of $F_j$ for every bad index $j$ such that
    \[ |\Lambda(f_1,\dots,f_n)|\lesssim |F|^\alpha\]
    for all $f_1\in X(F_1'),\dots,f_n\in X(F_n')$. Above we use the convention $F_i'=F_i$ for the good indices and 
    \[|F|^\alpha: =\prod_{i=1}^n|F_i|^{\alpha_i}.\]
\end{definition}
In our case, we will prove in the next subsection the following result. 
\begin{theorem} \label{threstricted type2}
Let \(\mathbb P\) be a finite sparse collection of multitiles such that there exist
\(l\) and \(k\), with \(n-l>2k\), such that \(\mathbb P\) is of rank \(k\)
with respect to \(\{l+1,\ldots,n\}\). Then \(\Lambda_{\mathbb P}\) is of
restricted type \(\alpha\) for all \(\alpha\in\mathbb R^n\) satisfying either
\[\begin{aligned} &\left\{ \begin{array}{l} \displaystyle \sum_{j=1}^n \alpha_j =1, \\
\text{The bad index } s\in\{l+1,\ldots,n\},\\
\alpha_j\in(0,1),\qquad \text{for all } j=1,\ldots,l,\\
\alpha_j\in\left(\frac12,1\right),\qquad \text{for all }j=l+1,\ldots,n,\ j\neq s,\\
\alpha_s \in \left( \frac{3}{2}-n+k+l-\sum_{j=1}^l\alpha_j \;,\; 2-n+k+l-\sum_{j=1}^l\alpha_j \right),
\end{array}\right.
\\[1ex] &\hspace{2.5em}\text{\emph{or}}
\\[1ex] &\left\{ \begin{array}{l} \displaystyle \sum_{j=1}^n \alpha_j =1, \\
\text{The bad index }s\in\{1,\ldots,l\},\\
\alpha_j\in(0,1),\qquad \text{for all }j=1,\ldots,l,\ j\neq s, \\
\alpha_j\in\left(\frac12,1\right),\qquad \text{for all }j=l+1,\ldots,n,\\
\displaystyle\sum_{j=l+1}^{n}\alpha_j=n-l-k.
\end{array} \right.
\end{aligned} \]
We denote by $R_1$ the first set of $\alpha$s described above, $R_2$ the second one and $R=R_1\cup R_2$. 
\end{theorem}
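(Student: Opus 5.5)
We follow the restricted type strategy of \cite{MuscaluTaoThiele2002}, treating the indices $1,\dots,l$ (outside $G$) only through the trivial size bound of Lemma \ref{sizeestimate}, so that all the tree and energy machinery runs on $G=\{l+1,\dots,n\}$. Fix $F_1,\dots,F_n$ and let $j_0$ be the bad index. By scaling invariance we may normalize $|F_{j_0}|=1$. We then set
\[ \Omega=\bigcup_{j\neq j_0}\{M(\chi_{F_j})>C|F_j|\}, \]
where $M$ is the Hardy--Littlewood maximal function, and take $F_{j_0}'=F_{j_0}\setminus \Omega$. This is a major subset for $C$ large. Next we decompose $\mathbb{P}$ according to the parameter $d\ge 0$ with $1+\operatorname{dist}(I_P,\Omega^c)/|I_P|\sim 2^{d}$. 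On the piece $\mathbb{P}_d$ the size bounds from Lemma \ref{sizeestimate} give
\[ S_j\lesssim 2^{d}|F_j| \quad (j\neq j_0), \qquad S_{j_0}\lesssim 2^{-Md}. \]
The energies obey $E_j\lesssim |F_j|^{1/2}$, or the localized version \eqref{eq:energielocalisee} when we restrict to intervals $I_0$. It therefore suffices to prove, for each $d$, a bound of the form $2^{-\epsilon d}|F|^\alpha$ for $\alpha$ in a slightly larger open region, and to sum in $d$. Interpolation within the convex open sets $R_1$ and $R_2$ lets us work near the extreme points.

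The core estimate on a fixed $\mathbb{P}_d$ is a generalized size--energy inequality restricted to $G$:
\[ |\Lambda_{\mathbb{P}}|\lesssim \prod_{j=1}^{l} S_j\,\prod_{j\in G} S_j^{\theta_j}E_j^{1-\theta_j}, \]
with $0\le\theta_j<1$, $\sum_{j\in G}(1-\theta_j)=2k$ and each $1-\theta_j\le 1$ up to $\epsilon$. We prove this by induction on $k$, as in \cite{MuscaluTaoThiele2002}.

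For each $j\in G$ we apply Lemma \ref{lemmedec2} to obtain levels $\mathbb{P}^j_{m_j}$ covered by trees with $\sum|I_T|\lesssim 2^{2m_j}$. Intersecting over $k$ chosen indices, a piece $\bigcap_r \mathbb{P}^{i_r}_{m_{i_r}}$ is covered by trees that are overlapping in some index of $G$; by Lemma \ref{lemmarank} and rank $k$ with respect to $G$, each such tree is a collection of rank $k-1$ with respect to $G\setminus\{i\}$. The induction hypothesis then applies on each tree, with the localized energy \eqref{eq:energielocalisee} on $I_T$ (the $\lambda$ parameter coming from $\Omega$). Once all $k$ levels are fixed, we bound the remaining trees with two lacunary indices via Lemma \ref{lemmafinite}, which uses only $G$ indices and non strongly degenerate directions, together with Lemma \ref{lemmaprod}. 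Summing the geometric series
\[ \sum_{m} 2^{2m}\prod\min(2^{-m}E_j,S_j) \]
yields the exponents $\theta_j$. This is where $\#G=n-l>2k$ is needed: we must distribute the total energy weight $2k$ among the $n-l$ indices of $G$ with each weight strictly less than $1$, which is exactly what produces the constraints $\alpha_j>1/2$ on $G$.

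Finally we plug in the size and energy bounds and compute the exponents. In case $R_2$, the bad index lies outside $G$ and enters only through $S_{j_0}\lesssim 2^{-Md}$, with $\alpha_{j_0}$ free. The $G$ exponents then come out as $\alpha_j=1-\frac{1-\theta_j}{2}\in(\frac12,1)$, and
\[ \sum_{j\in G}\alpha_j=(n-l)-k, \]
while the non-$G$ good indices receive arbitrary $\alpha_j\in(0,1)$ by interpolating the trivial bound $S_j\le \min(1,2^d|F_j|)$. In case $R_1$, the bad index lies in $G$ and its energy is $\lesssim 1$. Since $|F_{j_0}|=1$, this loses the identity for $\sum_G\alpha_j$ and gives only the inequality range for $\alpha_s$ stated in the theorem; the lower endpoint corresponds to the requirement $1-\theta_s<1$, i.e.\ a loss of at most $\frac12$.

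The main obstacle is the inductive size--energy step. Trees overlapping in an index $i\in G$ must inherit rank $k-1$ with respect to $G\setminus\{i\}$, and this requires checking that the degenerate indices never appear in the parametrizing directions. The localized energy must also be combined correctly with the $2^{-Md}$ decay so that the sum over $d$ converges. We would first try to follow \cite{MuscaluTaoThiele2002} line by line on $\tilde{\mathbb{P}}$, with the non-$G$ coefficients absorbed into the supremum \eqref{eq:suplessthansize}.
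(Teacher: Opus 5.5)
Your outer argument is the paper's: normalizing $|F_{j_0}|\sim 1$, removing $\Omega$, splitting into $\mathbb P_d$, the bounds $S_j\lesssim 2^d|F_j|$ and $S_{j_0}\lesssim 2^{-Md}$, and reading off $\beta_j=\alpha_j$ off $G$ and $\theta_j=2\alpha_j-1$ on $G$. In $R_1$ the condition $\theta_s\in(0,1)$ gives exactly the interval for $\alpha_s$, and in $R_2$ one gets $\sum_{j\in G}\alpha_j=n-l-k$.

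The gap is the core estimate. You state it as a pure size--energy inequality with global energies, $|\Lambda_{\mathbb P}|\lesssim\prod_{j\le l}S_j\prod_{j\in G}S_j^{\theta_j}E_j^{1-\theta_j}$, and claim it by induction on $k$. The paper notes this estimate is not available for $k>1$, and your induction does not close. In the inductive step, the hypothesis applied to a tree $T'$ overlapping in some $i\in G$ returns the tree energies $\mathrm{energy}_j(T')$. To gain the factor $|I_T|$ needed against $\sum_T|I_T|\lesssim 2^{2m_i}$, you must localize them via $\mathrm{energy}_j(T')\lesssim \min(1,|I_T|)^{1/2}a_j(1+\lambda_j)^{1/2}$. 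After summing over the ordered $m_j$, the output is
\[
\prod_{j\le l}S_j\; S_i^{\theta_i}E_i^{1-\theta_i}\prod_{j\in G\setminus\{i\}}S_j^{\theta_j}E_j^{\tilde\theta_j-\theta_j}\bigl(a_j(1+\lambda_j)^{1/2}\bigr)^{1-\tilde\theta_j}.
\]
Since $a_j(1+\lambda_j)^{1/2}$ dominates $E_j$ rather than the reverse, this is not of the form you are inducting on. For $k=2$ the mismatch is harmless for the application, because the level-$1$ input is the genuine size--energy estimate. From $k=3$ on, however, the level-$(k-1)$ input your step requires is never produced.

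The repair, which is the paper's Theorem \ref{threcurrence}, is to induct on what the step actually produces. Assume $I_P\subset I_0$ and $\|f_j\tilde{\chi}_{I_P}^{N/2}\|_2^2\lesssim\lambda_j a_j^2|I_P|$ for all $P$ and all $j\in G$; this hypothesis is inherited by every subtree. Then prove
\[
\tilde\Lambda_{\mathbb P}\lesssim\min(1,|I_0|)\prod_{j\le l}S_j\prod_{j\in G}S_j^{\theta_j}\bigl(a_j(1+\lambda_j)^{1/2}\bigr)^{1-\theta_j},\qquad \sum_{j\in G}\theta_j=n-l-2k .
\]
Applied to $T'$ with $I_0=I_T$, this gives $\min(1,|I_T|)\le|I_T|$ directly. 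Now split $\theta_j=\tilde\theta_j\theta_j'$ with $\sum_{j\in G\setminus\{i\}}\tilde\theta_j=n-l-2k+1$, interpolate $\mathrm{size}_{T',j}\le\min(2^{-m_j}E_j,S_j)$, and sum with the $\varepsilon$-shift on the smallest level. The surviving global energies have total exponent $(1-\theta_i)+\sum_{j\neq i}(\tilde\theta_j-\theta_j)=2k-2(k-1)=2$. One application of \eqref{eq:energielocalisee} therefore turns them into exactly $\min(1,|I_0|)$ times $a_j(1+\lambda_j)^{1/2}$ factors, and the induction closes.

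The factor $\min(1,|I_0|)$ serves two roles. It is $\le|I_T|$ inside the induction, and it is $\le 1$ in the final application to $\mathbb P_d$, whose enclosing interval can be arbitrarily large. Your application is otherwise unaffected, since there you only bound energies by $|F_j|^{1/2}$ times powers of $2^d$ anyway. Take $a_j=|F_j|^{1/2}$, $\lambda_j=2^d$ for the good indices in $G$, and $\lambda_{j_0}=2^{-Nd}$ when $j_0\in G$. The resulting $2^{Cd}$ losses are absorbed by $S_{j_0}\lesssim 2^{-Nd}$.
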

\begin{remark}Notice that here, we obtain two different kinds of sets depending on where the bad index lies, whereas in the nondegenerate setting, we would only obtain something similar to $R_1$. The second range $R_2$ plays no extra role in the boundedness of the operator  $T_\mathbb{P}$ when $n\in G$ as the $\alpha$ tuples that we are interested in have to satisfy $\alpha_j\in [0,1)$ for all $j=1, \dots, n-1$.
\\One can feel confused as to why when we have degenerate directions, we get wider range for the exponent $\alpha_j$ but one should remember that these indices can only be bad in a very specific way ($R_2$ has strict constraints), and therefore when we take the convex hull of these tuples we get less estimates than in the nondegenerate case. 
\end{remark}
We are now interested in describing the convex hull of $R$ as we want to apply a multilinear interpolation result. We will focus on the convex hull of $R_1$ as $R_2$ only plays a part when $n$ has to be removed. 
In the spirit of \cite{MuscaluTaoThiele2002}, we first introduce a small lemma that will help us describe the convex hull. 
\begin{lemma} \label{lemmaconv}
    Let $\alpha\in \mathbb{R}^n$, with $\alpha_{l+1}\geq \cdots \geq \alpha_n$, then the convex hull of the points $(\alpha_1,\dots,\alpha_n)$ and its permutations of the $n-l$ last coordinates is the set of points $(\alpha_1,\dots,\alpha_l,x_{l+1},\dots, x_n)$ such that:
    \[ x_{l+1}+\cdots +x_n=\alpha_{l+1}+\cdots +\alpha_n, \] 
    \[ x_{i_1}+\cdots +x_{i_r} \leq \alpha_{l+1}+\cdots +\alpha_{r+l},\]
  for all $l+1\leq i_1<\cdots <i_r \leq n$ and all $1\leq r\leq n-l$.
\end{lemma}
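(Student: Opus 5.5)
The first $l$ coordinates are fixed under all the permutations, so I would first reduce to a statement about vectors in $\mathbb{R}^{m}$ with $m:=n-l$. Set $\beta=(\alpha_{l+1},\dots,\alpha_n)$, which is already ordered so that $\beta_1\geq\cdots\geq\beta_m$. The lemma then reduces to the classical description of the permutohedron: the convex hull of the orbit $\{\sigma\beta\}_{\sigma\in S_m}$ is the set $K$ of all $x\in\mathbb{R}^m$ with $\sum_i x_i=\sum_i\beta_i$ and $\sum_{i\in A}x_i\leq \beta_1+\cdots+\beta_{\sharp A}$ for every nonempty $A\subset\{1,\dots,m\}$. Equivalently, $x$ is majorized by $\beta$. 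I would prove the two inclusions separately.

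\emph{Inclusion of the hull in $K$.} Each defining condition of $K$ is a linear equality or inequality, so $K$ is convex. It therefore suffices to check that every permutation $\sigma\beta$ lies in $K$. The equality holds trivially. For the inequalities, the sum of any $r$ coordinates of $\sigma\beta$ is a sum of $r$ distinct entries of $\beta$, and such a sum is at most the sum of the $r$ largest entries, namely $\beta_1+\cdots+\beta_r$.

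\emph{Inclusion of $K$ in the hull.} This is the main step. I would argue by separation. Suppose $x\in K$ but $x$ is not in the hull $H$, which is compact and convex. Then there is a vector $c\in\mathbb{R}^m$ with $\langle c,x\rangle>\max_\sigma\langle c,\sigma\beta\rangle$. Relabel the coordinates so that $c_{1}\geq\cdots\geq c_m$; this is harmless because $K$ and $H$ are both invariant under permutations. By the rearrangement inequality the maximum over $\sigma$ equals $\sum_i c_i\beta_i$. Abel summation then gives
\[\langle c,x\rangle=\sum_{r=1}^{m-1}(c_r-c_{r+1})\,(x_1+\cdots+x_r)+c_m\,(x_1+\cdots+x_m).\]
In this expression every coefficient $c_r-c_{r+1}$ is nonnegative. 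Each partial sum $x_1+\cdots+x_r$ is bounded by $\beta_1+\cdots+\beta_r$ by the definition of $K$, and the full sum $x_1+\cdots+x_m$ equals $\beta_1+\cdots+\beta_m$. Hence $\langle c,x\rangle\leq\sum_i c_i\beta_i$, which is a contradiction.

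The only delicate point is the sign of the last coefficient $c_m$, which may be negative. This is handled by using the equality constraint exactly, not an inequality. Alternatively, one can add a multiple of $(1,\dots,1)$ to $c$ beforehand so that $c_m=0$; this does not change the separation, since both sides shift by the same constant. Finally, I would reinsert the fixed coordinates $\alpha_1,\dots,\alpha_l$, which gives exactly the set stated in the lemma.
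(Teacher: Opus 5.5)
Your proof is correct and takes the paper's route: the paper also freezes the first $l$ coordinates and defers to the classical description of the convex hull of permutations (Lemma 3.6 of \cite{MuscaluTaoThiele2002}), and your separation argument with Abel summation, which uses the equality constraint to absorb the sign of $c_m$, is a complete proof of that classical step. You do not need the rearrangement inequality: taking $\sigma=\mathrm{id}$ already gives $\max_\sigma\langle c,\sigma\beta\rangle\geq\langle c,\beta\rangle$, and that is all the contradiction requires.
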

The proof is similar to Lemma $3.6$ from \cite{MuscaluTaoThiele2002}. The only difference is that we keep the first $l$ indices fixed, rather than allowing them to be permuted.
\\This leads us to consider the set $Q$ of all admissible tuples $\alpha$ such that 
\[ \alpha_{i_1}+\cdots +\alpha_{i_r}< \frac{n-2k-l+r}{2}\]
for all $l+1\leq i_1<\cdots <i_r \leq n$, and all $1\leq r\leq n-l$. We also impose that if $\alpha$ is a bad tuple in $Q$, then its bad index is in $\{l+1,\dots,n\}$. Then we have: 
\begin{lemma} \label{lemmaconv2}
    The set $Q$ is contained in the convex hull of $R_1$ and contains all good tuples $\alpha$. Furthermore if $\alpha \in Q$ with bad index $j \in \left\{l+1,\dots, n\right\} $, then there exists $\tilde{\alpha}\in R_1$, with bad index $j$, such that for all $i\neq j$, $n\geq i\geq l+1$, $\tilde{\alpha}_i>\alpha_i$ and such that $\alpha$ is in the convex hull of $\tilde{\alpha}$ and the points of $R_1$ with bad index $i\neq j$, $i\in \{l+1, \dots, n\}$. 
\end{lemma}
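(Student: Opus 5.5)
We adapt the proof of Lemma 3.8 in \cite{MuscaluTaoThiele2002}. The first $l$ coordinates play no role in the linear constraints defining $Q$, so we work coordinate by coordinate in the last $n-l$ indices. Throughout, we use Lemma \ref{lemmaconv} to describe convex hulls of permutations of the last $n-l$ coordinates, with the first $l$ coordinates held fixed.

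\textbf{Step 1: good tuples lie in $Q$.} Let $\alpha$ be a good admissible tuple, so $\alpha_i\in[0,1)$ for every $i$. For indices $i_1<\dots<i_r$ in $\{l+1,\dots,n\}$ we have
\[\alpha_{i_1}+\cdots+\alpha_{i_r}\le \min\Big(r,\ \sum_i\alpha_i\Big)=\min(r,1).\]
The constraint defining $Q$ reads $\alpha_{i_1}+\cdots+\alpha_{i_r}<\frac{n-l-2k+r}{2}$. Since $n-l-2k\ge 1$, the right-hand side is at least $\frac{1+r}{2}$. For $r\ge 2$ this exceeds $1$; for $r=1$ it equals $1$ while $\alpha_{i_1}<1$. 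Hence every good tuple belongs to $Q$.

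\textbf{Step 2: the explicit $\tilde\alpha$ for a bad tuple.} Fix $\alpha\in Q$ with bad index $j\in\{l+1,\dots,n\}$. Keep $\alpha_1,\dots,\alpha_l$ (these lie in $[0,1)$, hence can be perturbed into $(0,1)$). We need to build $\tilde\alpha\in R_1$ with bad index $j$ and $\tilde\alpha_i>\alpha_i$ for $i\ge l+1$, $i\ne j$. Order the good coordinates $\alpha_i$, $i\in G\setminus\{j\}$, decreasingly, and set
\[\tilde\alpha_i=\tfrac12+\varepsilon_i+\theta\,\alpha_i.\]
The $\varepsilon_i$ and $\theta$ are chosen so that $\tilde\alpha_i\in(\frac12,1)$, $\tilde\alpha_i>\alpha_i$, and
\[\tilde\alpha_j=1-\sum_{i\neq j}\tilde\alpha_i\in\Big(\tfrac32-n+k+l-\textstyle\sum_{i\le l}\alpha_i,\ 2-n+k+l-\sum_{i\le l}\alpha_i\Big).\]
The upper bound on $\tilde\alpha_j$ is automatic, since each $\tilde\alpha_i>\frac12$. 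The lower bound is equivalent to $\sum_{i\in G\setminus\{j\}}\tilde\alpha_i<\frac{n-l-2k+(n-l-1)}{2}$. This is exactly the constraint of $Q$ with $r=n-l-1$, applied with room to spare to the $\tilde\alpha_i$, and it is possible because the strict inequality for $\alpha$ leaves a gap.

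\textbf{Step 3: the convex hull statement.} We show $\alpha$ is a convex combination of $\tilde\alpha$ and points of $R_1$ with bad index $i\ne j$. Take the points $\beta^{(i)}$ obtained from $\tilde\alpha$ by transposing coordinates $i$ and $j$. Because the bad coordinate of $\beta^{(i)}$ sits at position $i$ and its other coordinates are those of $\tilde\alpha$, each $\beta^{(i)}$ lies in $R_1$. By Lemma \ref{lemmaconv}, the hull of $\tilde\alpha$ and all its permutations of the last $n-l$ coordinates consists of the tuples with first $l$ entries fixed, the same total, and partial sums of the $r$ largest entries bounded by the $r$ largest entries of $\tilde\alpha$. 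The inequalities $\tilde\alpha_i>\alpha_i$, together with equal total sums, give majorization $\alpha\prec\tilde\alpha$ on the last $n-l$ coordinates. Since the smallest coordinate is the same (position $j$) in both, one can realize $\alpha$ by successive transfers between the bad slot $j$ and a single slot $i$ (Robin Hood moves). Each such move stays in the hull of $\tilde\alpha$ and the $\beta^{(i)}$, so only transpositions with $j$ are needed and no other permutations enter.

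\textbf{Step 4: $Q\subset\mathrm{conv}(R_1)$.} For bad tuples this is Step 3. For good tuples in $Q$, either perturb to a nearby bad tuple and average, or redo Step 3 with $\alpha_j\ge 0$.

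The main obstacle is Step 2: verifying that the constraint of $Q$ for every $r$, and not only $r=n-l-1$, leaves enough room to choose all $\tilde\alpha_i<1$ simultaneously while hitting the window for $\tilde\alpha_j$. I would first try taking $\tilde\alpha_i$ of the form $t\alpha_i+(1-t)c$ with a common constant $c$, and check the extreme $r$ separately.
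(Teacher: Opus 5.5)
Your route differs from the paper's. You build a majorant $\tilde\alpha\in R_1$ with bad index $j$, then express $\alpha$ through $\tilde\alpha$ and its transpositions $\beta^{(i)}$. The paper instead freezes $\alpha_1,\dots,\alpha_l$, follows Muscalu--Tao--Thiele, and applies Lemma \ref{lemmaconv} to the extreme point $\alpha^{(1)}$ (entries $1$ repeated $n-2k-l$ times, $\frac12$ repeated $2k-1$ times, bad entry $\frac32+k+l-n-\sum_{i\le l}\alpha_i$) and its permutations. Your route is fine in principle. \textbf{The gap is in Step 2}, the only place where the inequalities defining $Q$ enter: it is left undone, and the part you call automatic is false.

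\emph{The claimed automatic bound fails.} The upper bound $\tilde\alpha_j<2-n+k+l-\sum_{i\le l}\alpha_i$ is equivalent to $\Sigma:=\sum_{i\in G\setminus\{j\}}\tilde\alpha_i>n-k-l-1$. But $\tilde\alpha_i>\frac12$ only gives $\Sigma>\frac{n-l-1}{2}$, which suffices only when $n-l=2k+1$; otherwise you must push many $\tilde\alpha_i$ toward $1$.

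\emph{The $r=n-l-1$ constraint is not the relevant one.} Its slack for $\alpha$ does not survive raising the small entries up to $\frac12$. Take $k=2$, $n=5$, $l=0$: the admissible tuple $(0.9,0.9,0,0,-0.8)$ satisfies the $r=4$ constraint, yet no $\tilde\alpha$ exists. (It is excluded from $Q$ only by the $r=2$ constraint.)

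\emph{The missing argument.} As each $\tilde\alpha_i$ ranges over $(\max(\frac12,\alpha_i),1)$, $\Sigma$ fills the interval $\bigl(\sum_{i\in G\setminus\{j\}}\max(\frac12,\alpha_i),\,n-l-1\bigr)$. This interval meets $(n-k-l-1,\,n-k-l-\frac12)$ iff $\sum_{i\in G\setminus\{j\}}\max(\frac12,\alpha_i)<n-k-l-\frac12$. Let $A=\{i\in G\setminus\{j\}:\alpha_i>\frac12\}$ and $r=\sharp A$. The $Q$-constraint on $A$ gives $\sum_{i\in A}\alpha_i<\frac{n-2k-l+r}{2}$, hence $\sum\max(\frac12,\alpha_i)<\frac{n-2k-l+r}{2}+\frac{n-l-1-r}{2}=n-k-l-\frac12$. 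For $r=0$ this is just $n-l>2k$.

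\textbf{Step 3 reaches a true conclusion, but your justification does not prove it.} Lemma \ref{lemmaconv} only places $\alpha$ in the hull of all permutations of $\tilde\alpha$, including those with bad index $j$ other than $\tilde\alpha$ itself. A Robin Hood transfer applied to an intermediate point $x$ is a combination of $x$ and $\tau_{ij}x$. That brings in products such as $\tau_{2j}\tau_{1j}\tilde\alpha$, which in general lie outside $\mathrm{conv}\{\tilde\alpha,\beta^{(i)}\}$.

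Use the explicit identity $\alpha=\lambda_0\tilde\alpha+\sum_{i\in G\setminus\{j\}}\lambda_i\beta^{(i)}$ with $\lambda_i=\frac{\tilde\alpha_i-\alpha_i}{\tilde\alpha_i-\tilde\alpha_j}$. The coordinates match because $\alpha_j-\tilde\alpha_j=\sum_{i\in G\setminus\{j\}}(\tilde\alpha_i-\alpha_i)$. Also $\lambda_0>0$, because $\tilde\alpha_i-\tilde\alpha_j=(\tilde\alpha_i-\alpha_j)+(\alpha_j-\tilde\alpha_j)>\alpha_j-\tilde\alpha_j$, using $\tilde\alpha_i>\frac12>\alpha_j$.

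The same identity handles good tuples in Step 4 if you take $j$ to minimize $\alpha$ on $G$. Then $\alpha_j\le\frac13<\tilde\alpha_i$, since $\sum_G\alpha_i\le 1$ and $\sharp G\ge 3$. With these repairs you get a self-contained proof of both parts, more explicit than the paper's sketch.

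One caveat remains, shared with the paper. Tuples in $Q$ with $\alpha_i=0$ for some $i\le l$ are not in $\mathrm{conv}(R_1)$ at all, since every point of $R_1$ has those coordinates in $(0,1)$. So the lemma must be read with closures, as the paper implicitly does via $\bar R$. Your ``perturb into $(0,1)$'' does not establish membership there.
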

The proof is similar to the one in \cite{MuscaluTaoThiele2002}, with some careful handling of the $l$ first indices which cannot be 
bad. However once these coordinates are fixed, by picking some $\alpha_1, \dots, \alpha_l \in (0,1)$ the proof is essentially the same. Indeed if $\alpha \in Q$, then $\alpha$ is in the convex hull of the extremal points 
  \[ \alpha^{(1)}= (\alpha_1,\dots, \alpha_l, \underbrace{1,\dots,1}_{n-2k-l},\underbrace{\frac{1}{2},\dots, \frac{1}{2}}_{2k-1}, \frac{3}{2}+k+l-n-\sum_{j=1}^l \alpha_j) \in \bar{R}\]
    and all permutations of $\alpha$ with $\alpha^{(i)}_j=\alpha_j$ for $j=1,\dots, l$.

\begin{remark}
In the case where $n\notin G$, the set we consider is the following: 
    \[S= \left\{ \alpha\in\mathbb{R}^n \;\middle|\; \begin{array}{l}
\alpha_j\in(0,1)\qquad \forall j \neq n,\\[0.3em]
\alpha_{i_1}+\cdots+\alpha_{i_r}<\dfrac{n-2k+r-l}{2},\quad \forall\, 1\le i_1<\cdots<i_r\le n, \quad i_1, \dots, i_r \in G \quad \forall\, r=1,\ldots,n-l,\\[0.3em]
\displaystyle\sum_{j=1}^{n}\alpha_j=1
\end{array}
\right\}.
\]
And the equivalent statement of Lemma \ref{lemmaconv2} is the following: \textit{ $S$ is in the convex hull of $R$. All good tuples ($\alpha_j\geq0$ for all $j$) are contained in $S$. If $\alpha \in S$ has bad index $n$, then $\alpha=\lambda \alpha^{(1)}+ (1-\lambda) \alpha^{(2)}$ with $\alpha^{(1)} \in \text{Conv}(R_1)$, $\alpha^{(2)}\in R_2$ with bad index $n$ and $\alpha^{(2)}$ is a majorant of $\alpha$, i.e. $\alpha^{(2)}_j>\alpha_j$ for all $j\in G$.} 
\\ The proof of this statement is very similar to the proof of Lemma \ref{lemmaconv2}, the only difference being the construction given a tuple $\alpha \in S$ of the associated $\alpha^{(1)}$ and $\alpha^{(2)}$. In this case $\alpha^{(2)}$ is defined, up to enlarging a bit $\alpha_j^{(2)}$ for $j \in G$, by:
\[ \alpha^{(2)}_j=\max(\frac{1}{2}, \alpha_j) \text{ for j}\in G, \quad \alpha^{(2)}_j=\alpha_j \text{ for j}\notin G, j\neq n, \quad \text{and } \alpha_n=1-(n-l-k)-\sum_{j\notin G} \alpha_j.\]
By picking a $\lambda$ satisfying 
\[\frac{\alpha_n-1}{\alpha_n^{(2)}-1}>1-\lambda > \frac{\alpha_n}{\alpha_n^{(2)}}\]
and defining $\alpha^{(1)}=\frac{1}{\lambda}\left(\alpha-(1-\lambda)\alpha^{(2)}\right)$, we get the desired statement. 
\end{remark}
Theorem \ref{threstricted type2} and Lemma \ref{lemmaconv2} imply the following. 
\begin{theorem}
    Let $\alpha \in Q$ and $m$ as in Theorem \ref{th1}. Then $\Lambda_m$ is of restricted type $\alpha$. 
\end{theorem}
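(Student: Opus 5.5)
The plan is to pass from the model forms to $\Lambda_m$ and then interpolate between restricted type estimates. By the discussion in Section 3, $\Lambda_m$ is, up to the rapidly decaying coefficients $(c^Q_m)_m$ and the finitely many sparse subcollections, a superposition of model forms $\Lambda_{\mathbb{P}}$. Here each $\mathbb{P}$ is a finite sparse collection of multitiles satisfying Lemma \ref{lemmarank}. Under the hypotheses of Theorem \ref{th1}, with $G=\{l+1,\dots,n\}$ and $n-l>2k$, such a collection is of rank $k$ with respect to $\{l+1,\dots,n\}$. Theorem \ref{threstricted type2} therefore gives that each $\Lambda_{\mathbb{P}}$ is of restricted type $\alpha$ for every $\alpha\in R=R_1\cup R_2$, with implicit constants uniform in $\mathbb{P}$. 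Summing the superposition (fast decay in $m$, finitely many pieces) transfers the restricted type $\alpha$ estimates, for every $\alpha\in R$, to $\Lambda_m$ itself. This transfer needs a common choice of major subsets for all pieces. As in \cite{MuscaluTaoThiele2002}, I would get this from the fact that the exceptional set in the proof of Theorem \ref{threstricted type2} depends only on $F_1,\dots,F_n$ and the bad index, not on $\mathbb{P}$.

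For a given $\alpha\in Q$ I would then argue by cases.

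\emph{Good tuple.} Lemma \ref{lemmaconv2} puts $\alpha$ in the convex hull of $R_1$. Restricted type estimates at good tuples interpolate along convex combinations: by multilinear Marcinkiewicz interpolation, the set of good $\alpha$ at which $\Lambda$ is of restricted type is convex, since the estimate $|\Lambda|\lesssim|F|^\alpha$ is log-convex in $\alpha$ once the same major subsets may be used.

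\emph{Bad tuple with bad index $j\in\{l+1,\dots,n\}$.} I would use the second part of Lemma \ref{lemmaconv2}. It provides $\tilde\alpha\in R_1$ with bad index $j$ and $\tilde\alpha_i>\alpha_i$ for $i\neq j$, $i\ge l+1$, and it places $\alpha$ in the convex hull of $\tilde\alpha$ and points of $R_1$ with bad indices $i\neq j$. I would then reproduce the argument of \cite{MuscaluTaoThiele2002} (their Lemma 3.8/Theorem 3.4 interpolation step). Fix $F_1,\dots,F_n$, remove the exceptional set attached to $\tilde\alpha$ from $F_j$ to form $F_j'$, and run the interpolation among the remaining points, where index $j$ now carries a major subset. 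The strict majorization $\tilde\alpha_i>\alpha_i$ is what absorbs the losses when the other tuples' bad indices $i$ are handled: on those indices we can pass to major subsets because the exponent gap lets the level-set decomposition in $f_i$ converge geometrically.

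\emph{Bad index among the first $l$ coordinates.} By the definition of $Q$, tuples in $Q$ have no bad index there, so this case does not arise. (The case $n\notin G$ and the set $S$ would use the analogous statement from the Remark, with $R_2$ playing the role of $\tilde\alpha$.)

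I expect the main obstacle to be the bad-index interpolation step. The asymmetry between the indices $1,\dots,l$, which can never be bad in $R_1$, and the indices in $G$ means I cannot invoke the symmetric statement of \cite{MuscaluTaoThiele2002} verbatim. I must check that the majorization in Lemma \ref{lemmaconv2}, which is only claimed for $i\ge l+1$, is enough for the argument. My first attempt is to observe that the coordinates $\alpha_1,\dots,\alpha_l$ are held fixed in all the extremal points. Then no interpolation happens in those variables, and the functions $f_1,\dots,f_l$ simply ride along as fixed characteristic-type functions in $X(F_i)$ with $\alpha_i\in(0,1)$. The interpolation thus reduces to an $(n-l)$-variable argument of exactly the Muscalu–Tao–Thiele type.
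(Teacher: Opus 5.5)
Your route is the paper's. Its proof is just the appeal to Theorem \ref{threstricted type2} and Lemma \ref{lemmaconv2}, with the interpolation machinery of \cite{MuscaluTaoThiele2002} implicit. Your transfer from $\Lambda_{\mathbb{P}}$ to $\Lambda_m$, your bad-index case, and your remark that the first $l$ coordinates are frozen in the extremal points are all consistent with it.

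The good-tuple case, however, fails as written. Every point of $R_1$ is a bad tuple: if $i_0$ is its bad index then, since $n-l\geq 2k+1$,
\[
\alpha_{i_0}<2-n+k+l-\sum_{j=1}^{l}\alpha_j\leq 1-k-\sum_{j=1}^{l}\alpha_j\leq 0 .
\]
So the convexity of the set of good tuples at which $\Lambda$ has restricted type has nothing to start from.

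A good $\alpha\in Q$ is a convex combination of bad tuples with \emph{different} bad indices. Each of those bounds holds only when one particular function is supported on a major subset of its own set. Restricted type at a good tuple allows no major subset at all. A geometric mean of bounds that are valid for different classes of functions is not a bound. Removing the major subset at a single vertex by iterating on $F_{i_0}\setminus F_{i_0}'$ also fails: the bounds on the pieces grow geometrically, because that vertex has a negative $i_0$-th exponent. Passing from bad vertices with distinct bad indices to a good tuple is exactly the nontrivial interpolation step that you defer to \cite{MuscaluTaoThiele2002} in the bad case.

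So the good case has to go through that same step.
\begin{enumerate}
\item Designate an index $j\in G$ to carry a major subset, even though $\alpha_j\geq 0$.
\item Take a majorizing $\tilde\alpha\in R_1$ with bad index $j$. It exists for good tuples of $Q$ by the construction of Lemma \ref{lemmaconv2}, using $n-l\geq 2k+1$.
\item Run your bad-case argument to get the estimate, with a major subset of $F_j$ only, at all tuples in a neighbourhood of $\alpha$.
\item Remove the major subset by multilinear Marcinkiewicz interpolation of the resulting restricted weak type bounds for the adjoint form in the $j$-th slot. Its target exponent $1/(1-\alpha_j)$ is at least $1$ for good $\alpha$.
\end{enumerate}
The resulting strong type bound gives restricted type $\alpha$ on the full sets.
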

This is enough to prove Theorem \ref{th1}, as in \cite{MuscaluTaoThiele2002}.
\\ We are only left with proving Theorem \ref{threstricted type2} which will be the object of the next subsection. 

\subsection{A localized estimate and restricted type estimate for Theorem \ref{threstricted type2}}
We recall some notations from the last section: 
\[ S_j=\mathrm{size}_{\mathbb{P}, j}(f_j), \;\: j=1,\dots,n,\]
\[ E_j=\mathrm{energy}_j(\mathbb{P}), \;\: j=l+1,\dots,n.\]

We first begin by proving an estimate which is the key to proving Theorem \ref{threstricted type2}.
\begin{theorem} \label{threcurrence}
     Let $\mathbb{P}$ be a finite, sparse collection of multitiles of rank $k$ with respect to $\{ l+1,\dots, n\}$, $I_0$ a dyadic interval such that for all $P \in \mathbb{P}$, $I_P \subset I_0$. Let $f_1, \dots, f_n $ measurable functions such that there exists $(a_j)\in \mathbb{R}^n$ such that for all $j=1, \dots, n$
     \[ \|f_j\|_2 \leq a_j.\]
     Assume moreover that there exist $\lambda_{l+1},\dots,\lambda_n$ strictly positive real numbers such that for all $P \in \mathbb{P}$ and all $j=l+1,\dots,n$, we have:
    \[ \|f_j \tilde{\chi}_{I_P}^{N/2}\|_2^2 \lesssim \lambda_j a_j^2 |I_P|.\]
    Then, we have the estimate 
    \[ \sum_{P\in \mathbb{P}} |I_P|^{1-\frac{n}{2}}\prod_{j=1}^n |\langle f_j, \phi_{P_j}^j \rangle | \lesssim \min(1,|I_0|)\left(\prod_{j=1}^l S_j \right) \times  \prod_{j=l+1}^n S_j^{\theta_j}a_j^{1-\theta_j}(1+\lambda_j)^{(1-\theta_j)/2}\]
    for any $\theta_{l+1},\dots, \theta_n \in (0,1)$ such that:
    \[ \theta_{l+1}+\cdots+\theta_n=n-2k-l.\]
\end{theorem}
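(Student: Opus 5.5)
We argue by induction on the rank $k$, following the scheme of \cite{MuscaluTaoThiele2002}, with the indices $1,\dots,l$ treated passively. They only ever contribute through the trivial bound \eqref{eq:suplessthansize}, $|\langle f_j,\phi_{P_j}^j\rangle|\le |I_P|^{1/2}S_j$. Pulling these factors out reduces the left-hand side to
\[\Big(\prod_{j\le l} S_j\Big)\sum_{P\in\mathbb P}|I_P|^{1-\frac{n-l}{2}}\prod_{j=l+1}^n|\langle f_j,\phi_{P_j}^j\rangle|,\]
which is the nondegenerate model form in the $n-l$ variables of $G$. The projected collection $\tilde{\mathbb P}$ is of rank $k$ by hypothesis.

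\textbf{Base case $k=1$.} For $k=1$ every tree overlapping in some $i\in G$ is a $j$-tree for all $j$ by ($p_6$), and by Lemma \ref{lemmafinite} it splits into finitely many subtrees that are lacunary in two indices. We then run the standard size/energy argument:
\begin{itemize}
\item For each $j\in G$, apply Lemma \ref{lemmedec2} to decompose $\mathbb P=\bigcup_m\mathbb P_m^j$, with tree families satisfying $\sum_T|I_T|\lesssim 2^{2m}$.
\item Intersect the decompositions to obtain pieces $\mathbb P_{\vec m}$. On each piece, Lemma \ref{lemmaprod} and the bound $\sum_T|I_T|\lesssim\min_j 2^{2m_j}$ give a contribution $\lesssim\min_j 2^{2m_j}\prod_{j\in G}\min(2^{-m_j}E_j,S_j)$.
\item Interpolate the minimum as $\min_j 2^{2m_j}\le\prod_j 2^{2m_j(1-\theta_j)/...}$ with the right weights. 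The constraint $\sum_{j\in G}\theta_j=n-l-2$ makes the exponents of $2^{m_j}$ in the weighted product of energies sum to $2$, so the geometric sums over $m_j\ge m_0^j$ converge because $\theta_j\in(0,1)$.
\item Finally, \eqref{eq:energielocalisee} gives $E_j\lesssim a_j(1+\lambda_j)^{1/2}\min(1,|I_0|)^{1/2}$, and the product of these $\min(1,|I_0|)^{(1-\theta_j)/2}$ factors yields $\min(1,|I_0|)$ since $\sum_j(1-\theta_j)=2$.
\end{itemize}

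\textbf{Inductive step, rank $k$ to rank $k-1$.} Fix $j=n$ (any index of $G$) and decompose with Lemma \ref{lemmedec2}: $\mathbb P=\bigcup_m\bigcup_{T\in\mathbb T_m^n}T$, where each $T$ is overlapping in some $i\in G$ and $\sum_{T\in\mathbb T_m^n}|I_T|\lesssim 2^{2m}$. This is where the extra overlapping requirement in the definitions of size and energy pays off. An $i$-overlapping tree with $i\in G$ has all its $i$-tiles comparable to the top. On $T$, the factor $\langle f_i,\phi_{P_i}^i\rangle$ is then controlled by the size alone, and the remaining collection is of rank $k-1$ with respect to $G\setminus\{i\}$, with $n-l-1>2(k-1)$.

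We apply the induction hypothesis on each tree with $I_0=I_T$, after checking the localized hypotheses. The hypothesis $\|f_j\tilde\chi_{I_P}^{N/2}\|_2^2\lesssim\lambda_ja_j^2|I_P|$ is inherited with the same $\lambda_j$, and the normalization of the sum $|I_P|^{1-n/2}$ must be rebalanced by one factor $|I_P|^{1/2}$ coming from the overlapping index. This yields
\[\lesssim |I_T|\, S_i\cdot(\text{sizes})\prod_{j\ne i}S_j^{\theta_j'}a_j^{1-\theta_j'}(1+\lambda_j)^{(1-\theta_j')/2},\qquad \sum\theta'_j=n-l-2k+1.\]
Here $\min(1,|I_T|)\le|I_T|$ is used, and for $|I_T|$ large one keeps $\min(1,|I_T|)$ together with the tree count.

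Summing over $T$ and $m$ uses $\sum_T|I_T|\lesssim 2^{2m}$ against the size $\min(2^{-m}E_n,S_n)$ of $f_n$ on $\mathbb P_m^n$. This trades one unit of $\theta$ at the index $n$ into $E_n$-powers, giving $S_n^{\theta_n}E_n^{1-\theta_n}$ with the geometric series in $m$ converging. The $\min(1,|I_0|)$ factor again comes from \eqref{eq:energielocalisee}. The full range of $\theta$'s is recovered by averaging over the choice of the distinguished index and taking convex combinations, since the admissible set of $\theta$ is convex.

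\textbf{Main obstacle.} The hard step is the exponent bookkeeping in the inductive step. We must make the sum over trees close, i.e. produce exactly $\sum|I_T|$, which is controlled by energy, rather than $\sum|I_T|^{\text{power}}$. We must also propagate the factor $\min(1,|I_0|)$ and the $\lambda_j$ localization through trees whose tops may exceed scale $1$. We expect to have to split into the cases $|I_T|\le1$ and $|I_T|>1$, and to check that the rank $k-1$ structure on an overlapping tree really holds only for $G$-indices. The latter is guaranteed by ($p_5$)–($p_6$) for directions inside $G$.
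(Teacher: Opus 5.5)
Your inductive step, which you rightly flag as the crux, does not close as sketched. You decompose only in the index $n$ and pay $\sum_{T\in\mathbb T^n_m}|I_T|\lesssim 2^{2m}$ against the $n$-size $\min(2^{-m}E_n,S_n)$. Meanwhile, the bound you display after the induction hypothesis carries the \emph{global} sizes $S_j^{\theta'_j}$. The $n$-size enters a tree estimate at most to the first power, so you get at most $2^{-m}$ of decay against $2^{2m}$, and the sum over $m$ diverges.

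The same failure shows up in the exponent count. The pulled-out overlapping index carries a full power of a size, and the induction hypothesis has $\sum\theta'_j$ one larger than the target. So two units of exponent must be converted from sizes into energies, while any single index can absorb strictly less than one. Averaging over the distinguished index cannot rescue bounds that already diverge individually.

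The missing idea is the paper's construction:
\begin{enumerate}
\item Decompose $\mathbb P$ by Lemma \ref{lemmedec2} simultaneously in \emph{every} $j\in G$, and restrict to $m_{l+1}\le\cdots\le m_n$.
\item Cover by the trees of the index with the smallest $m$.
\item Apply the induction hypothesis on $T'=T\cap\bigcap_j\mathbb P^j_{m_j}$ with \emph{enlarged} exponents $\tilde\theta_j=\theta_j+\varepsilon_j$, where $\sum_{j\ge l+2}\varepsilon_j=1+\theta_{l+1}$, so that $\sum_{j\ge l+2}\tilde\theta_j=n-l-2k+1$. This is achievable with $\tilde\theta_j<1$ precisely because $k\ge 2$.
\end{enumerate}
Since $T'\subset\mathbb P^j_{m_j}$, one has $\mathrm{size}_{T',j}(f_j)^{\tilde\theta_j}\le S_j^{\theta_j}(2^{-m_j}E_j)^{\tilde\theta_j-\theta_j}$. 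Combine this with the factor $(2^{-m_{l+1}}E_{l+1})^{1-\theta_{l+1}+\varepsilon}$ from the pulled-out index. After summing over $m_j\ge m_{l+1}$, the total decay is $2^{-(2+\varepsilon)m_{l+1}}$ and the total energy exponent is exactly $2$, which \eqref{eq:energielocalisee} turns into $\min(1,|I_0|)$. Trees lacunary in $l+1$ and overlapping in another index of $G$ are handled the same way, with the $\varepsilon$-loss moved into $\tilde\theta_{l+1}$.

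Your opening reduction is also wrong. Bounding $|\langle f_j,\phi^j_{P_j}\rangle|\le|I_P|^{1/2}S_j$ for $j\le l$ \emph{before} the tree analysis throws away square-summability in those indices. But the two lacunary indices provided by ($p_7$) and Lemma \ref{lemmafinite} may lie in $\{1,\dots,l\}$.

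Here is a concrete instance. Take $\Gamma'_1$, so $G=\{2,3,4\}$ and $|I_T|=1$. Use the Whitney cubes of side $2^r$ centred at $(C_02^r,t,t-C_02^r,-2t)$, at $K$ separated scales, with one multitile for each dyadic $I\subset I_T$ of length $2^{-r}$. These form a tree overlapping in $2$ and $4$ and lacunary only in $1$ and $3$. Choose $f_2,f_4$ to be modulated bumps and $f_3=K^{-1/2}\sum_P|I_P|^{1/2}\phi^3_{P_3}$. Then the reduced $G$-form is $\sim K^{1/2}$, while your right-hand side stays $O(1)$. So the reduced form is not a nondegenerate model form. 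The paper instead applies Lemma \ref{lemmaprod} to the full $n$-linear form and only afterwards bounds $\mathrm{size}_{T',j}(f_j)\le S_j$ for $j\le l$.

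Finally, in your base case the weighted geometric mean of $\min_j 2^{2m_j}$ with weights $(1-\theta_j)/2$ makes every exponent of $2^{m_j}$ exactly zero. You need the shift $\theta_j\mapsto\theta_j-\varepsilon$ together with the cutoff $2^{-m_0^j}E_j\sim S_j$, as the paper does after ordering the $m_j$.
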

\begin{remark} 
 We first prove the estimate for $k=1$ and  then prove the general case by induction. 
 \\ One can at first be surprised by the $\min(1,|I_0|)$ quantity that appears above. It is here for technical purposes since, when $k>1$, an $i$-overlapping tree $T$ does not have enough structure to apply Lemma \ref{lemmaprod} and therefore we will need to make the quantity $|I_T|$ appear with this $\min$. 
\\ With this in mind, to get the $\min(1,|I_0|)$, we will need to use the localization estimate \eqref{eq:energielocalisee}, for which we need the above hypothesis on the $\lambda_j$ to get the $|I_0|$ bound. This also explains the $a_j$ and $(1+\lambda_j)$ quantities that appear in our theorem. One can think of them as the reminiscence of the energy. 
\\ Also take note that the assumption on the existence of such $\lambda_j$ is very light as we will later on decompose our collection $\mathbb{P}$ into $\mathbb{P}_{\lambda_1,\dots,\lambda_n}$ with such $\lambda_1,\dots,\lambda_n$.
\\ Last but not least we relate the result presented above to Theorem 6.4 in \cite{MuscaluTaoThiele2002}. The first noticeable difference is the product of the sizes for the degenerate directions. This product is obtained by “ignoring” the indices $\{1, \dots, l\}$ and is the reason why we then obtain restricted type estimates for two separate sets of $\alpha$s, $R_1$ and $R_2$. As we have seen in the previous subsection, this weakens our interpolation result. 
\\The second difference is that our estimate no longer requires several technical assumptions that were introduced in \cite{MuscaluTaoThiele2002} solely for the induction argument. Moreover, we obtain a simpler estimate, which does not involve the auxilary quantity $A$, and only uses the exponents $\theta_{l+1}, \dots, \theta_n$ relevant to our setting. This estimate is closer in spirit to the size-energy estimate obtained in the case $k=1$, as the size quantity is preserved and the $a_j(1+\lambda_j)$ play the role of the energy. This is a consequence of the adjustments made in the last section. 
\end{remark}
\begin{proof}
The proof follows the same pattern as the one in \cite{MuscaluTaoThiele2002} for symbols singular along nondegenerate spaces acting only on $\{l+1, \dots, n\}$. 
\\ We denote:
\[ \tilde{\Lambda}_{\mathbb{P}}(f_1, \dots, f_n):= \sum_{P\in \mathbb{P}} |I_P|^{1-\frac{n}{2}}\prod_{j=1}^n |\langle f_j, \phi_{P_j}^j \rangle |.\]
\textbf{The $k=1$ case: } Assume that $k=1$. 
 Let $\theta_{l+1},\dots,\theta_n \in \left(0,1\right)$ such that $\displaystyle \sum_{j=l+1}^n \theta_j=n-2-l$. Let also $\varepsilon>0$ such that $\theta_j-\varepsilon>0$ for all $j=l+1,\dots,n$. One should not be concerned about the $\varepsilon$ parameter as it is only here to help us compute a certain sum. 
\\ Using Lemma \ref{lemmedec2}, for each index, $j=l+1,\dots,n$, we decompose our collection $\mathbb{P}$, into:
\[ \mathbb{P}=\bigcup_{m_j\in\mathbb{Z}} \mathbb{P}_{m_j}^{j}.\]
Then, 
\[\mathbb{P}=\bigcup_{m_{l+1},\dots,m_n\in\mathbb{Z}} \bigcap_{j=l+1}^n\mathbb{P}_{m_j}^{j}.\]
Without loss of generality we can assume that that $m_{l+1}\leq \cdots\leq m_n$ (by symmetry of the cases). For each tuple $m_{l+1}\leq \cdots \leq m_n$, we use the decomposition into trees from Lemma \ref{lemmedec2}:
\[\mathbb{P}_{m_{l+1}}^{l+1}=\bigcup_{T\in\mathbb{T}_{m_{l+1}}^{l+1}}T.\]
We denote 
\[ \mathbb{P}_{m_{l+1},\dots,m_n}:=\displaystyle \bigcap_{j=l+1}^n\mathbb{P}_{m_j}^{j} .\]
Since $k=1$, for each tree $T\in \mathbb{T}^{l+1}_{m_{l+1}}$, the tree $T\cap \mathbb{P}_{m_{l+2}}^{l+2} \cap \cdots \cap \mathbb{P}_{m_n}^n$ has at least two lacunary indices. Thus, using Lemma \ref{lemmaprod}, we get: 
\begin{align*}
     \tilde{\Lambda}_{\mathbb{P}_{m_{l+1},\dots,m_n}}(f_1\dots,f_n)&\leq \sum_{\substack{T':=T\cap \mathbb{P}_{m_{l+1}}^{l+1}\cap\cdots \cap \mathbb{P}_n^n \\T\in\mathbb{T}_{m_{l+1}}^{l+1} }} \tilde{\Lambda}_{T'}(f_1,\dots,f_n)\\
    &\leq \sum_{T\in\mathbb{T}_{m_{l+1}}^{l+1}}|I_T| \prod_{j=1}^n\mathrm{size}_{T',j}(f_j) \\
    &\leq \left(\prod_{j=1}^l\mathrm{size}_{\mathbb{P},j}(f_j)\right)\sum_{T\in\mathbb{T}_{m_{l+1}}^{l+1}}|I_T| \prod_{j=l+1}^n \min(2^{-m_j}E_j,S_j) \\
    &\lesssim 2^{2m_{l+1}} \left(\prod_{j=1}^l\mathrm{size}_{\mathbb{P},j}(f_j)\right)  \left( \prod_{j=l+2}^n (2^{-m_j}E_j)^{1-\theta_j}S_j^{\theta_j}\right) (2^{-m_{l+1}}E_{l+1})^{1-\theta_{l+1} +\varepsilon}S_{l+1}^{\theta_{l+1}-\varepsilon}
\end{align*}
where we used the estimate $\sum_{T \in \mathbb{T}_{m_{l+1}}^{l+1}} |I_T| \lesssim 2 ^{2m_{l+1}}$.
We now sum over  $m_{l+1}\leq \cdots \leq m_n$, using that for $m_{l+1}$, we have the stopping condition $2^{-m_{l+1}}E_{l+1} \leq S_{l+1}$ (up to a factor 2). Thus, we get: 
\begin{align*}
     \sum_{m_{l+1}\leq \cdots \leq m_n}\tilde{\Lambda}_{\mathbb{P}_{m_{l+1},\dots,m_n}}(f_1,\dots,f_n)&\lesssim  \left(\prod_{j=1}^l\mathrm{size}_{\mathbb{P},j}(f_j)\right)  \left( \prod_{j=l+2}^n E_j^{1-\theta_j}S_j^{\theta_j}\right)E_{l+1}^{1-\theta_{l+1} +\varepsilon}S_{l+1}^{\theta_{l+1}-\varepsilon}\\
     & \times\sum_{m_{l+1}} \underbrace{2^{2m_{l+1}-m_{l+1}\sum_{j=l+1}^n(1-\theta_j)-m_{l+1}\varepsilon}}_{2^{{2m_{l+1}}-m_{l+1}(n-l)+m_{l+1}(n-l-2)-m_{l+1}}=2^{-m_{l+1}\varepsilon}} \\
     &\lesssim \left(\prod_{j=1}^l\mathrm{size}_{\mathbb{P},j}(f_j)\right)  \left( \prod_{j=l+2}^n E_j^{1-\theta_j}S_j^{\theta_j}\right)E_{l+1}^{1-\theta_{l+1} +\varepsilon}S_{l+1}^{\theta_{l+1}-\varepsilon} \left(\frac{S_{l+1}}{E_{l+1}} \right)^{\varepsilon}\\
     &\lesssim  \left(\prod_{j=1}^l\mathrm{size}_{\mathbb{P},j}(f_j)\right)  \left( \prod_{j=l+1}^n E_j^{1-\theta_j}S_j^{\theta_j}\right).
     \end{align*}
We prove the same estimate in a similar way for the other subcollections of $\mathbb{P}$, and therefore get the estimate: 
\[ \tilde{\Lambda}_{\mathbb{P}}(f_1,\dots,f_n)\lesssim \left(\prod_{j=1}^l\mathrm{size}_{\mathbb{P},j}(f_j)\right)  \left( \prod_{j=l+1}^n E_j^{1-\theta_j}S_j^{\theta_j}\right). \]
Notice that we are almost done, we just need to use the local energy estimate Proposition \ref{energielocalisee}, and more precisely the estimate \eqref{eq:energielocalisee}, to conclude. Indeed, we get: 
\begin{align*}
    \tilde{\Lambda}_{\mathbb{P}}(f_1,\dots,f_n) &\lesssim \left(\prod_{j=1}^l\mathrm{size}_{\mathbb{P},j}(f_j)\right)  \left( \prod_{j=l+1}^n (\min(1,|I_0|)^{1/2}a_j(1+\lambda_j)^{1/2})^{1-\theta_j}S_j^{\theta_j}\right)\\
    &\lesssim \min(1,|I_0|)\left(\prod_{j=1}^l\mathrm{size}_{\mathbb{P},j}(f_j)\right)  \left( \prod_{j=l+1}^n (a_j(1+\lambda_j)^{1/2})^{1-\theta_j}S_j^{\theta_j}\right)
\end{align*}
which concludes the proof in the case $k=1$.

\begin{remark}
    For the case $k=1$, we could have stopped at the size-energy estimate:
    \[ \left|\Lambda_{\mathbb{P}}(f_1,\dots,f_n)\right|\lesssim \left(\prod_{j=1}^l\mathrm{size}_{\mathbb{P},j}(f_j)\right)  \left( \prod_{j=l+1}^n E_j^{1-\theta_j}S_j^{\theta_j}\right),\]
    which doesn't require the local energy estimates and therefore doesn't require the assumptions on the $I_0$ and $\lambda_j$ in the theorem. Indeed this bound is enough to conclude to restricted weak type bounds. We chose to write the theorem with the localized bound as we will need it for the $k>1$ case. 
\end{remark}
\textbf{The induction on $k$: }
Let now $k>1$ and assume the theorem is proven for $k-1$. The idea is to reduce the number of directions we need to fix in order to get a tree lacunary in at least two directions. To do so, we will use the decomposition lemma to decompose our collection into trees overlapping in at least one direction and then we will use the induction hypothesis on these trees. 
    \\ With this in mind let $\theta_{l+1},\dots,\theta_n\in (0,1)$ such that $\theta_{l+1}+\cdots +\theta_n=n-2k-l$.
    Let also $\varepsilon>0$ be such that for all $j=l+1,\dots,n$, $\theta_j-\varepsilon>0$. We use Lemma \ref{lemmedec2} to split our collection for every index $j=l+1,\dots,n$: 
\[\mathbb{P}=\underset{m_{l+1},\dots,m_n \in \mathbb{Z}}{\bigcup} \underset{j=l+1}{\overset{n}{\bigcap}}\mathbb{P}_{m_j}^j,\]
and we restrict ourselves again to study the subcollections such that $m_{l+1}\leq \cdots \leq m_n$. We now decompose for all $m_{l+1}$ $\mathbb{P}_{m_{l+1}}^{l+1}=\underset{T\in\mathbb{T}_{m_{l+1}}^{l+1}}{\bigcup}T$ and we consider:
\[\sum_{m_{l+1}\leq \cdots\leq m_n} \sum_{\substack{T'=T\cap\mathbb{P}_{m_{l+2}}^{l+2}\cap \cdots \cap \mathbb{P}_{m_n}^n \\ T\in \mathbb{T}_{m_{l+1}}^{l+1}}}  \tilde{\Lambda}_{T'}(f_1,\dots,f_n)\]
where $T\in \mathbb{T}_{m_{l+1}}^{l+1}$ is either an $l+1$-overlapping tree or it is $l+1$-lacunary and a $j$-overlapping tree, for an index $j>l+1$. We will separate the argument into these two cases, as they are somewhat similar but they need some slight adjustments.
\\ First we consider the $T'=T\cap\mathbb{P}_{m_{l+2}}^{l+2}\cap \cdots \cap \mathbb{P}_{m_n}^n$ trees that are $l+1$-overlapping trees and we estimate:
\[  \tilde{\Lambda}_{T'}(f_1,\dots,f_n)\lesssim 2^{-m_{l+1}(1- \theta_{l+1} +\varepsilon)}S_1^{\theta_{l+1}-\varepsilon}E_1^{1-\theta_{l+1} +\varepsilon} \tilde{\tilde{\Lambda}}_{T'}(f_1,\dots,f_l, f_{l+2},\dots,f_n)\]
where $ \displaystyle \tilde{\tilde{\Lambda}}_{T'}(f_1,\dots,f_l,f_{l+2}, \dots f_n)=\underset{P\in T'}{\sum} |I_P|^{1-n/2+1/2}\underset{\substack{j=1 \\ j\neq l+1}}{\overset{n}{\prod}}|\langle  f_j,\phi_{P_j}^j\rangle  |$.
\\ Notice that if we look at the subcollection $T'$ without the $l+1$ tile $P_{l+1}$, it is a collection of rank $k-1$ with space dimension $n-1$ since we fixed the $l+1$-th coordinate $P_{l+1} \leq P_{T,l+1}$ for all $P\in T'$. We would like to apply the induction hypothesis to the quasi multilinear form $\tilde{\tilde{\Lambda}}_{T'}$. To do so let's choose $0<\tilde{\theta}_j<1$ and $0<\theta_j'<1$ such that:
\begin{align*}
\begin{dcases}
    &\tilde{\theta_j}\theta_j'=\theta_j,\quad \text{for }l+2\leq j\leq n,\\
   &\sum_{j=l+2}^n \tilde{\theta_j}=(n-l-1)-2(k-1)=n-l-2k+1.
\end{dcases}
\end{align*}
As proving that such numbers exist is just a technical difficulty, we first finish the induction proof and then we will explain at how to pick such numbers. 
\\ Applying the induction result to the form $\tilde{\tilde{\Lambda}}_{T'}$, with the numbers $\tilde{\theta}_{l+2},\dots,\tilde{\theta}_n$ and $I_0=I_T$, we get: 
\[ \tilde{\tilde{\Lambda}}_{T'}(f_1,\dots,f_n)\lesssim \min(1,|I_T|) \prod_{j=1}^l S_j \times \prod_{j=l+2}^na_j^{1-\tilde{\theta}_j} (1+\lambda_j)^{\frac{1-\tilde{\theta}_j}{2}}\tilde{S}_j^{\tilde{\theta}_j}\]
where $\tilde{S}_j:=\mathrm{size}_{T',j}(f_j)$. Now since $T'\subset \mathbb{P}_{m_j}^j$, its $j$ size is bounded by the quantity $\min(S_j, 2^{-m_j}E_j)$ and interpolating between the two bounds with $\theta_j'$, we get: 
\[\tilde{\tilde{\Lambda}}_{T'}(f_1,\dots,f_n)\lesssim |I_T| \prod_{j=1}^lS_j \times \prod_{j=l+2}^n a_j^{1-\tilde{\theta}_j}(1+\lambda_j)^{\frac{1-\tilde{\theta}_j}{2}} \underbrace{S_j^{\theta_j'\tilde{\theta}_j} E_j^{(1-\theta_j')\tilde{\theta_j}}2^{-m_j(1-\theta_j')\tilde{\theta}_j}}_{=S_j^{\theta_j}E_j^{\tilde{\theta}_j-\theta_j}2^{-m_j(\tilde{\theta}_j-\theta_j)}}\]
Let's now look at the quantities we got from the last computation. The $|I_T|$ factor will be helpful with the sum over $T\in \mathbb{T}_{m_{l+1}}^{l+1}$ by using the estimate given in Lemma \ref{lemmedec2}. We want to keep the $a_j,\; 1+\lambda_j,\; S_j$ factors as they are in our induction hypothesis. The energy factors are going to be used to introduce the $\min(1, |I_0|)$ quantity as we did in the $k=1$ case. Finally, it remains to estimate geometric series in $m_{l+1}\leq \cdots\leq m_n$.
\\ With this in mind, we estimate: 
\begin{align*}
    \sum_{m_{l+1}\leq\cdots\leq m_n}
    \tilde{\Lambda}_{\mathbb{P}_{m_{l+1},\dots,m_n}}(f_1,\dots,f_n) &
    \lesssim S_{1+1}^{\theta_{l+1}-\varepsilon}E_{l+1}^{1-\theta_{l+1} +\varepsilon}\prod_{j=l+2}^n a_j^{1-\tilde{\theta}_j}(1+\lambda_j)^{\frac{1-\tilde{\theta}_j}{2}} S_j^{\theta_j}E_j^{\tilde{\theta}_j-\theta_j}  \times \prod_{j=1}^l S_j\\
    &\times \sum_{m_{l+1}\leq\cdots\leq m_n} 2^{-m_{l+1}(1- \theta_{l+1} +\varepsilon)} \prod_{j=l+2}^n 2^{-m_j(\tilde{\theta}_j-\theta_j)} \underbrace{\sum_{T\in \mathbb{T}_{m_{l+1}}^{l+1}} |I_T|}_{\lesssim 2^{2m_{l+1}}}\\
    & \lesssim \prod_{j=1}^l S_j \times S_{l+1}^{\theta_{l+1}-\varepsilon}E_{l+1}^{1-\theta_{l+1} +\varepsilon}\prod_{j=l+2}^n a_j^{1-\tilde{\theta}_j}(1+\lambda_j)^{\frac{1-\tilde{\theta}_j}{2}} S_j^{\theta_j}E_j^{\tilde{\theta}_j-\theta_j}  \\
    &\times \underbrace{\sum_{\substack{m_{l+1}\\ 2^{-m_{l+1}}E_{l+1}\leq S_{l+1}}}\underbrace{2^{2m_{l+1} -m_1{l+1}(1-\theta_{l+1}+\varepsilon)-m_{l+1}\underset{j=l+2}{\overset{n}{\sum}}(\tilde{\theta}_j-\theta_j)}}_{=2^{-m_{l+1}\varepsilon}}}_{\lesssim \left(\frac{S_{l+1}}{E_{l+1}}\right)^\varepsilon}\\
    &\lesssim \prod_{j=1}^lS_j \times  S_{l+1}^{\theta_{l+1}}E_{l+1}^{1-\theta_{l+1}}\prod_{j=l+2}^n a_j^{1-\tilde{\theta}_j}(1+\lambda_j)^{\frac{1-\tilde{\theta}_j}{2}} S_j^{\theta_j}E_j^{\tilde{\theta}_j-\theta_j}.
\end{align*}
Now we use Proposition \ref{energielocalisee} and more precisely \eqref{eq:energielocalisee} to estimate all energies by: 
\[ E_j \lesssim \min(1, |I_0|)^{1/2} a_j (1+ \lambda_j)^{1/2}.\]
This gives us the bound: 
\[
    E_{l+1}^{1-\theta_{l+1}} \prod_{j=+2}^n a_j^{1-\tilde{\theta}_j}(1+\lambda_j)^{\frac{1-\tilde{\theta}_j}{2}}E_j^{\tilde{\theta}_j-\theta_j} \lesssim \min(1, |I_0|)\prod_{j=l+1}^n a_j^{1-\theta_j}(1+\lambda_j)^{\frac{1-\theta_j}{2}}.\]
Finally we get the bound: 
\[\tilde{\Lambda}_\mathbb{P}(f_1,\dots,f_n)\lesssim \min(1, |I_0|) \prod_{j=1}^l S_j \times \prod_{j=l+1}^n a_j^{1-\theta_j}(1+\lambda_j)^{\frac{1-\theta_j}{2}}S_j^{\theta_j}\]
which was the expected induction quantity.
\\ We now need to deal with the case when $T'$ is $l+1$-lacunary. Then by construction, it is a $j$-overlapping tree for another index $j>l+1$. We will assume to simplify that it is a $n$-overlapping tree. Then we want to use the induction hypothesis on the collection $T'$ where we removed the $n$-th tile $P_n$ as this one is fixed by $P_{T,n}$. The reasoning is the same as before the only difference is how we deal with the $\varepsilon$ parameter which is why we separated the two cases.
\\ As we did before, we estimate
\[ \tilde{\Lambda}_{T'}(f_1, \dots, f_n) \leq \mathrm{size}_{T',n}(f_j) \tilde{\tilde{\Lambda}}_{T'}(f_1,\dots,f_{n-1}) \leq S_n^{\theta_n}E_n^{1-\theta_n} \tilde{\tilde{\Lambda}}_{T'}(f_1\dots,f_{n-1}).\]
Now we would like to apply the induction hypothesis on $\tilde{\tilde{\Lambda}}_{T'}$. To do so, we need to find as we did before some $\theta_j',\tilde{\theta}_j \in (0,1)$ such that: 
\begin{align*}
\begin{dcases}
    &\tilde{\theta_j}\theta_j'=\theta_j, \; j=l+2,\dots,n-1\\
    & \tilde{\theta}_{l+1}\theta_{l+1}'=\theta_{l+1}-\varepsilon \\
   &\sum_{j=l+2}^n \tilde{\theta_j}=(n-1)-2(k-1)-l=n-2k-l+1.
\end{dcases}
\end{align*}
This $\varepsilon>0$ that appears here is the same as the one from the first case, which we need in order to sum over $m_{l+1},\dots,m_n$. We again leave the proof of such $\tilde{\theta}$ and $\theta '$ existing for later. We are now ready to apply the induction hypothesis:
\[ \tilde{\tilde{\Lambda}}_{T'}(f_1,\dots,f_n) \lesssim \min(1,|I_T|) \prod_{j=1}^{l}\tilde{S}_j \times \prod_{j=l+1}^{n-1} a_j^{1-\tilde{\theta}_j}(1+\lambda_j)^{(1-\tilde{\theta}_j)/2}\tilde{S}_j^{\tilde{\theta}_j}\]
where $\tilde{S}_j:=\mathrm{size}_{T',j}(f_j)$. As before since $T'\subset \mathbb{P}_{m_j}^j$, we get the size estimate, for $j=l+1,\dots,n-1$:
\[\tilde{S}_j\leq S_j^{\theta_j'}2^{-m_j(1-\theta_j')}E_j^{1-\theta_j'}. \]
This gives us: 
\[ \tilde{\tilde{\Lambda}}_{T'}(f_1,\dots, f_{n-1}) \lesssim |I_T| \prod_{j=1}^l S_j \times \prod_{j=l+1}^{n-1}a_j^{1-\tilde{\theta_j}} (1+\lambda_j)^{(1-\tilde{\theta_j})/2}\underbrace{S_j^{\theta_j'\tilde{\theta}_j}2^{-m_j \tilde{\theta}_j(1-\theta_j')}E_j^{\tilde{\theta_j}(1-\theta_{j}')}.}_{\substack{=S_j^{\theta_j} E_j^{\tilde{\theta}_j-\theta_j }2^{-m_j (\tilde{\theta}_j-\theta_j)} \: \mathrm{if} \: j\geq l+2\\ = S_{l+1}^{\theta_{l+1}-\varepsilon} E_{l+1}^{\tilde{\theta}_{l+1}-\theta_{l+1}+\varepsilon}2^{-m_{l+1} (\tilde{\theta}_{l+1}-\theta_{l+1}+\varepsilon)} \: \mathrm{else}}}\]
As before, the $|I_T|$ quantity will be used to sum over the trees $T \in \mathbb{T}_{m_{l+1}}^{l+1}$, the $a_j$, $1+\lambda_j$ will be helpful to recover the induction hypothesis, the energy will allow us to get a new $\min(1,|I_0|)$ and we will use the powers of 2 to sum over $m_{l+1},\dots,m_n$. 
\\ We therefore estimate: 
\begin{align*}
\sum_{m_{l+1}\leq\cdots\leq m_n}\tilde{\Lambda}_{\mathbb{P}_{m_{l+1},\dots,m_n}}(f_1,\dots,f_n) 
    &\lesssim S_{l+1}^{\theta_{l+1}-\varepsilon}E_{l+1}^{\tilde{\theta}_{l+1}-\theta_{l+1} +\varepsilon} (a_1(1+\lambda_{l+1})^{1/2})^{1-\tilde{\theta}_{l+1}} \times E_n^{1-\theta_n}S_n^{\theta_n} \\
    &\times \prod_{j=l+2}^{n-1} a_j^{1-\tilde{\theta}_j}(1+\lambda_j)^{\frac{1-\tilde{\theta}_j}{2}} S_j^{\theta_j}E_j^{\tilde{\theta}_j-\theta_j}  \times \prod_{j=1}^l S_j\\
    &\times \sum_{m_{l+1}\leq\cdots\leq m_n}2^{-m_{l+1}(\tilde{\theta}_{l+1}- \theta_{l+1} +\varepsilon)}  2^{-m_n(1-\theta_n)} \prod_{j=l+2}^n 2^{-m_j(\tilde{\theta}_j-\theta_j)} \underbrace{\sum_{T\in \mathbb{T}_{m_{l+1}}^{l+1}} |I_T|}_{\lesssim 2^{2m_{l+1}}}\\
    & \lesssim\times  S_{l+1}^{\theta_{l+1}-\varepsilon}E_{l+1}^{\tilde{\theta}_{l+1}-\theta_{l+1} +\varepsilon} (a_1(1+\lambda_{l+1})^{1/2})^{1-\tilde{\theta}_{l+1}} \times E_n^{1-\theta_n}S_n^{\theta_n} \\
    & \times \prod_{j=1}^l S_j \prod_{j=l+2}^{n-1} a_j^{1-\tilde{\theta}_j}(1+\lambda_j)^{\frac{1-\tilde{\theta}_j}{2}} S_j^{\theta_j}E_j^{\tilde{\theta}_j-\theta_j}  \\
    &\times \underbrace{\sum_{\substack{m_{l+1}\\ 2^{-m_{l+1}}E_{l+1}\leq S_{l+1}}}\underbrace{2^{2m_{l+1} -m_{l+1}(\tilde{\theta}_{l+1}-\theta_{l+1}+\varepsilon)-m_{l+1}\underset{j=l+2}{\overset{n-1}{\sum}}(\tilde{\theta}_j-\theta_j)-m_{l+1}(1-\theta_n)}}_{=2^{-m_{l+1}\varepsilon}}}_{\lesssim \left(\frac{S_{l+1}}{E_{l+1}}\right)^\varepsilon}\\
    &\lesssim \prod_{j=1}^l S_j \times  S_n^{\theta_n}E_n^{1-\theta_n}\prod_{j=l+2}^{n-1} a_j^{1-\tilde{\theta}_j}(1+\lambda_j)^{\frac{1-\tilde{\theta}_j}{2}} S_j^{\theta_j}E_j^{\tilde{\theta}_j-\theta_j}.
\end{align*}
One can see that the estimate we obtain is similar to  the first case and as it now doesn't depend on the $\theta_j'$, the end of the reasoning is the same as the last case with the indices $l+1$ and $n$ exchanged. Using the local energy estimate \eqref{eq:energielocalisee} and computing all these we get the desired bound: 
\[\tilde{\Lambda}_\mathbb{P}(f_1,\dots,f_n)\lesssim \min(1, |I_0|) \prod_{j=1}^l S_j \times \prod_{j=l+1}^n a_j^{1-\theta_j}(1+\lambda_j)^{\frac{1-\theta_j}{2}}S_j^{\theta_j}.\]
\\ \textbf{We are now left with proving the existence of the $\tilde{\theta_j}$ and $\theta_j'$}. 
We want to find some $\tilde{\theta}_{l+2},\dots,\tilde{\theta}_n, \theta_{l+2}', \dots,\theta_n' \in \left(0,1\right)$ such that: 
\begin{align*}
\begin{dcases}
    &\tilde{\theta_j}\theta_j'=\theta_j,\\
   &\sum_{j=l+2}^n \tilde{\theta_j}=n-2k-l+1.
\end{dcases}
\end{align*}
We reduce this problem into finding for all $j=l+2,\dots,n$, $\varepsilon_j>0$ such that $\theta_j+\varepsilon_j<1$ and $\displaystyle \sum_{j=l+2}^n\varepsilon_j=1+\theta_{l+1}$, and then define:
\[\tilde{\theta}_j:=\theta_j+\varepsilon_j \;\;\mathrm{and}\;\;\theta_j':=\frac{\theta_j}{\theta_j+\varepsilon_j}.\]
Notice that for the second case, it will be enough to define $\tilde{\theta}_{l+1}$ in the same way and just change $\theta_{l+1}':= \frac{\theta_{l+1}-\varepsilon}{\theta_{l+1}+\varepsilon_{l+1}}$ which is why we focus on the first case. 
First choose, for any $j=l+2,\dots,n$, a $\delta_j>0$ such that 
\[ 1>\theta_j+\delta_j>\frac{n-l-2}{n-l-1}.\]
Then: 
\begin{align*}
    \sum_{j=l+2}^n\delta_j&=\sum_{j=l+2}^n(\delta_j+\theta_j) -\sum_{j=l+2}^n\theta_j\\
    &>\frac{n-l-2}{n-l-1}\times(n-l-1)-\left(\sum_{j=l+1}^n\theta_j\right)+\theta_{l+1}\\
    &>n-l-2-(n-2k-l)+\theta_{l+1} \\
    &>\underbrace{2(k-1)}_{\geq 1 \;\mathrm{since}\;k>1}+\theta_{l+1} \\
    &>\theta_{l+1} +1
\end{align*}
For $j=l+2, \dots,n$, define $\varepsilon_j=\delta_j\times\frac{\theta_{l+1}+1}{\sum\delta_l}$, then $0<\varepsilon_j<\delta_j$, therefore $\theta_j+\varepsilon_j<1$ and $\sum \varepsilon_j=1+\theta_{l+1}$ which is what we wanted. This concludes the proof of Theorem \ref{threcurrence}.
\end{proof}
We now have all the tools needed to prove Theorem \ref{threstricted type2}. 
\begin{proof}\textit{(Theorem \ref{threstricted type2})} Let $F_1, \dots, F_n \subset \mathbb{R}$, with finite measure. Let $\alpha \in R_1$, without loss of generality, we can assume that the bad index is $n$. By a scaling argument we can assume that $|F_n| \sim 1$. 
\\ Define the exceptionnal set: 
    \[ \Omega:=\bigcup_{j=1}^n \left\{ x \: | \: M(\chi_{F_j})(x)\geq C|F_j|\right\},  \]
and 
\[ F_n'=F_n\backslash \Omega. \]
Notice that if $C$ is large enough we have $|\Omega|< \frac{1}{2}$ and therefore, $|F_n'|> \frac{1}{2}|F_n|$. So $F_n'$ is a major subset of $F_n$. Let $f_1, \dots, f_{n-1} \in X(F_1),\dots, X(F_{n-1})$ and $f_n \in X(F_n')$. We want to prove that 
\[ |\Lambda_{\mathbb{P}}(f_1,\dots, f_n)| \lesssim |F'|^\alpha.\]
We denote for $j=1, \dots, n-1$, 
$a_j:=|F_j|^{1/2}$ and $a_n:=|F_n'|^{1/2}$.
\\We decompose \[ \mathbb{P}=\bigcup_{d \in \mathbb{N}} \mathbb{P}_{d}\]
where 
\[\mathbb{P}_d=\left\{P \in \mathbb{P} \; |\; (2^d-1)|I_P|\leq \text{dist}(I_P,\Omega^\complement )<(2^{d+1}-1)|I_P|\right\}.\]
Fix one $d \in \mathbb{N}$, we would like to apply Theorem \ref{threcurrence} to the subcollection $\mathbb{P}_d$. First let's notice a few properties of such a collection. 
For $j=1,\dots, n-1$, we have the size estimate:
\[ \mathrm{size}_{\mathbb{P}_d,j}(f_j)\lesssim \sup_{P\in \mathbb{P}_d}\frac{1}{|I_P|}\|f_j\tilde{\chi}_{I_P}^N\|_1 \lesssim\sup_{\substack{I_P \\ P \in \mathbb{P}_d}}\frac{1}{|I_P|}\int_{F_j}\tilde{\chi}_{I_P}^N(x)dx \lesssim 2^d a_j^2 \]
where we used Lemma \ref{sizeestimate} for the first inequality. We then used that for $P \in \mathbb{P}_d$, $2^{d+1}I_P \cap \Omega^\complement\neq \emptyset$ and that on $\Omega^\complement$, $M(\frac{\chi_{F_j}}{|F_j|})\leq C$.
\\ For $j=n$, on another hand we have: 
\[ \mathrm{size}_{\mathbb{P}_d,n}(f_n)\lesssim\sup_{P\in \mathbb{P}_d} \frac{1}{|I_P|}\int_{F_n'} \tilde{\chi}_{I_P}^N(x)dx \lesssim2^{-Nd} a_n^2 \]
where we used that $F_n' \subset \Omega^\complement$ and that $I_P$ is at distance $\sim 2^d |I_P|$ from $\Omega^\complement$.
\\ Last but not least, we estimate for $j=1,\dots,l$:
\[\mathrm{size}_{\mathbb{P}_{d},j}(f_j)\lesssim \min(1,a_j^2 2^{d}) \lesssim a_j^{2\beta_j}2^{d\beta_j}\]
for any $\beta_j \in [0,1]$.
 \\ Now to apply Theorem \ref{threcurrence}, we have to check that $\mathbb{P}_d$ satisfies the hypothesis. First, $\mathbb{P}_d$ is a sparse finite collection of tiles of rank $k$ with respect to $\{l+1, \dots,n\}$ and therefore, there exists an $I_0$ which contains all $I_P$, for $P\in \mathbb{P}_d$. Now for any $I_P$, such that $P \in \mathbb{P}_d$, we have for $l+1\leq j \leq n-1 $: 
\[ \|f_j\tilde{\chi}^{N/2}_{I_P}\|_2^2 = \int_\mathbb{R}|f_j(x)|^2\tilde{\chi}^{N}_{I_P}(x)dx\lesssim\int_{F_j}\tilde{\chi}_{I_P}^N(x)dx \lesssim2^da_j^2|I_P|\]
by the same reasoning as before. And similarly, we have for $j=n$, 
\[ \|f_n\tilde{\chi}^{N/2}_{I_P}\|_2^2 \lesssim \int_{F_n'}\tilde{\chi}_{I_P}^N(x)dx \lesssim2^{-Nd}a_n^2|I_P|.\]
We are now ready to apply Theorem \ref{threcurrence}, with $\lambda_{l+1},\dots,\lambda_{n-1}=2^d$, $\lambda_n=2^{-Nd}$ to get: 
\begin{align*}
    \left| \Lambda_{\mathbb{P}_{d}}(f_1,\dots,f_n)\right| &\lesssim \left( \prod_{j=1}^l \mathrm{size}_{\mathbb{P}_{d},j}(f_j) \right) \times \left( \prod_{j=l+1}^n a_j^{\theta_j}(1+\lambda_j)^{1/2(1-\theta_j)} \mathrm{size}_{\mathbb{P}_{d},j}(f_j)^{\theta_j}\right) \\
    &\lesssim  \left(\prod_{j=1}^l a_j^{2\beta_j} \lambda_j^{\beta_j} \right)\times \left( \prod_{j=l+1}^n a_j^{1+\theta_j}\lambda_j^{\theta_j}(1+\lambda_j)^{1/2(1-\theta_j)} \right) \\
    &\lesssim 2^{dC_{\beta_1,\dots, \beta_l,\theta_{l+1},\dots, \theta_{n-1} }} 2^{-Nd C_{\theta_n}}\left(\prod_{j=1}^la_j^{2\beta_j} \right)\times \left( \prod_{j=l+1}^n a_j^{1+\theta_j}\right).
\end{align*}
Provided that $N$ is large enough, this allows us to sum over $d$. 
\\ In the end, we get: 
\[  \left| \Lambda_\mathbb{P}(f_1,\dots,f_n)\right| \lesssim \left(\prod_{j=1}^l a_j^{2\beta_j}\right) \times \left( \prod_{j=1=l+1}^na_j^{1+\theta_j} \right),\]
and this for all $\beta_j \in \left(0,1\right)$, $\theta_j \in \left(0,1\right)$, such that $\displaystyle \sum_{j=l+1}^n \theta_j=n-l-2k$. 
\\ We now only need to prove that we can choose, for any $\alpha \in R_1$, $\beta_j=\alpha_j$ for $j=1, \dots, l$ and $\frac{1}{2}(1+ \theta_j)=\alpha_j$ for $j=l+1, \dots, n-1$. Indeed if this is the case then we would have: 
\[ \left|\Lambda_\mathbb{P}(f_1, \dots, f_n)\right| \lesssim \prod_{j=1}^l a_j^{2\beta_j} \prod_{j=l+1}^n a_j^{1+\theta_j} \lesssim \prod_{j=1}^l |F_j|^{\beta_j} \prod_{j=l+1}^n |F_j|^{(1+\theta_j)/2} \lesssim |F_n|^{(1-\theta_n)/2} \prod_{j=1}^{n-1}|F_j|^{\alpha_j} \lesssim |F|^\alpha\] 
where we used that $|F_n| \sim 1$ and that $a_j= |F_j|^{1/2}$.
   \\ First, for $j=l+1,\dots, n-1$, if we define $\theta_j=2 \alpha_j-1$, then 
     \[ \frac{1}{2}<\alpha_j <1 \iff 0<\theta_j<1.\]
     \\ Then, we define $\displaystyle \theta_n= n-l-2k- \sum_{j=l+1}^{n-1} \theta_j$. The only thing we need to check is that \\ $\theta_n \in \left(0,1\right)$. Let's rewrite $\theta_n$ depending on our $\alpha$:
     \begin{align*}
         \theta_n&=n-l-2k-\sum_{j=l+1}^{n-1}\theta_j \\
         &=n-l-2k- \sum_{j=l+1}^{n-1}(2\alpha_j-1) \\
         &=n-l-2k-\underbrace{2\sum_{j=1}^n \alpha_j}_{=2} +  2\sum_{j=1}^l\alpha_j + 2\alpha_n +n-l-1 \\ 
         &=2n-2l-2k+2 \alpha_n +2\sum_{j=1}^l\alpha_j-3.
     \end{align*}
    And therefore 
    \[ \alpha_n \in \left( \frac{3}{2}-n+l+k-\sum_{j=1}^l\alpha_j  \; ,\; 2-n+l+k -\sum_{j=1}^l \alpha_j\right) \iff 0<\theta_n<1.\]
    This concludes the proof when $\alpha \in R_1$. 
    \\ When $\alpha \in R_2$, if we for example assume that $1$ is the bad index, then one can repeat the same argument exchanging the indices $1$ and $n$. We get the estimate:
 \[ \left| \Lambda_{\mathbb{P}}(f_1,\dots,f_n)\right|\lesssim \left(\prod_{j=1}^l a_j^{2\beta_j} \right)\times \left( \prod_{j=l+1}^n a_j^{1+\theta_j}\right).\]
 for all $\beta_j \in (0,1)$, $\theta_j \in (0,1)$, such that $\displaystyle \sum_{j=l+1}^n \theta_j=n-l-2k$. 
 \\ By definition of $R_2$, we can choose $\beta_j=\alpha_j$ for $j=2, \dots,l $ and $\frac{1}{2}(1+\theta_j)=\alpha_j$ for $j=l+1, \dots, n$.
\end{proof}
As a conclusion, we proved by induction a localized estimate which replaces the size-energy estimate of the $k=1$ case. This is the key point to get restricted type estimates for the multilinear form and conclude to Theorem \ref{th1} by convex interpolation. 
\section{$L^p$ bounds for singularities with fewer strongly degenerate indices}
In this section, we present the proof of our second result, Theorem \ref{th2}, and complement it with some criteria determining when $\Xi_{\Gamma'}\neq \emptyset$ and two detailed examples.
\\ We now recall the context of Theorem \ref{th2}. Let $m \in \mathcal{C}^\infty(\Gamma \backslash \Gamma')$ with $\Gamma '$ a vector subspace of dimension $k$. We assume that there exists $s\in \mathbb{N}$, such that $1, \dots, s$ are the only \textit{strongly degenerate} indices of $\Gamma'$ and that $n-s>2k$. Then the multitile collection $\mathbb{P}$ of the discretized model operator satisfies Lemma \ref{lemmarank} and all the nondegenerate directions $(i_1, \dots, i_k)$ satisfy $i_1>s$. The indices $1, \dots, s$ will be treated in a similar way as in the last section as they don't parametrize in any way the singularity. Recall that since $1, \dots,s$ are \textit{strongly degenerate indices}, for all $u \in \Gamma'$, $u_1=\cdots=u_s=0$. 
\\The approach for the proof of Theorem \ref{th2} follows the proof presented in \cite{BeneaMuscalu2021} for vector valued multilinear multipliers singular along a nondegenerate vector space, as it allows us to consider the set of all nondegenerate directions. This method also gives us a sparse domination for the discretized model operator. 

\subsection{Some definitions and a localized estimate}
We begin by setting the frame for our approach and prove a similar estimate as Theorem 19 from \cite{BeneaMuscalu2021} which we will use in the next subsections to obtain the sparse domination and $L^p$ boundedness. As we seek to obtain a sparse domination, we will need to have some estimates on localized collections.  
\begin{definition}
    Let $I_0$ be a dyadic interval. We denote by $\mathbb{P}(I_0)$ the subcollection of multitiles $P \in \mathbb{P}$ such that $I_P\subset I_0$.
\end{definition}
We will also need to sometimes include the interval $I_0$ in the collection we consider and therefore, we need to slightly change our definition of size.
\begin{definition} Let $g$ be a function, $I_0$ a dyadic interval, then we define
    \[\widetilde{\mathrm{size}}_{I_0}(g)=\max \left( \sup_{P \in \mathbb{P}(I_0)}\frac{1}{|I_P|}\int |g| \tilde{\chi}_{I_P}^{M}, \frac{1}{|I_0|}\int |g| \tilde{\chi}_{I_0}^M   \right).\]
    For any $\mathfrak{s}>0$, we also define 
    \[ \widetilde{\mathrm{size}}_{I_0}^\mathfrak{s}(g):=\left(\widetilde{\mathrm{size}}_{I_0}(g^\mathfrak{s})\right)^{1/\mathfrak{s}}. \]
\end{definition}
\begin{remark}
    Notice that we also changed the usual $L^2$ size to a maximal average over the dyadic space intervals in $\mathbb{P}(I_0)$. This will not be an issue as we have from Lemma \ref{sizeestimate}:
    \[ \mathrm{size}_{\mathbb{P}(I_0),j}(f_j) \lesssim \widetilde{\mathrm{size}}_{I_0}(f_j).\]
    In particular, this new definition of size does not depend on the frequency intervals from the tiles but only on the collection of space intervals. 
\end{remark}
We are now ready to state the key estimate needed for the method in \cite{BeneaMuscalu2017}.
We recall that:
\begin{align*}
\Xi_{\Gamma'}:= \{(\alpha_{s+1},\dots, \alpha_n) \in (0,\frac{1}{2})^{n-s} \:| \: &\exists(\theta_{i_1,\dots, i_k})\in [0, 1]^{\sharp \{(i_1, \dots, i_k) \text{ good direction}\}}\: s.t. \\ & \alpha_j= \sum_{\substack{(i_1, \dots, i_k) \text{ good} \\ \exists r \text{ s.t. } i_r=j}} \theta_{i_1,\dots, i_k},  
 \sum_{(i_1, \dots, i_k) \text{ good}} \theta_{i_1, \dots i_k}=1 \}.\end{align*}
\begin{proposition} \label{thm19} 
Let $I_0$ be a dyadic interval. Then for any $(\alpha_{s+1},\dots, \alpha_n) \in \Xi_{\Gamma'}$
and for any $\beta_1,\dots, \beta_s\in [0,1]$, 
we have 
\begin{equation} \label{eq:localizedestimate2}
    |\Lambda_{\mathbb{P}(I_0)}(f_1,\dots, f_n)| \lesssim |I_0| \prod_{j=1}^s \widetilde{\mathrm{size}}_{I_0}(\mathbb{1}_{F_j})^{\beta_j} \times \prod_{j=s+1}^n \widetilde{\mathrm{size}}_{I_0}(\mathbb{1}_{F_j})^{1-\alpha_j} \end{equation}
    for any $F_1,\dots, F_n \subset \mathbb{R}$, sets of finite measure and any $f_1,\dots,f_n $ functions such that $f_j \in X(F_j)$.
\end{proposition}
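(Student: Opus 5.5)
Write $S_j:=\widetilde{\mathrm{size}}_{I_0}(\mathbb{1}_{F_j})$. The plan follows the proof of Theorem 19 in \cite{BeneaMuscalu2021}: build a size--energy type estimate for each nondegenerate direction separately, then take a geometric mean of these estimates with weights $\theta_{i_1,\dots,i_k}$.

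\textbf{Step 1: one good direction.} Fix a good direction $(i_1,\dots,i_k)$ with all $i_r>s$. I claim
\[|\Lambda_{\mathbb{P}(I_0)}(f_1,\dots,f_n)|\lesssim |I_0|\prod_{j\notin\{i_1,\dots,i_k\}}S_j\prod_{r=1}^k S_{i_r}^{1/2}.\tag{$\ast$}\]
To prove it, apply Lemma \ref{lemmedec3} successively in the indices $i_1,\dots,i_k$, with size levels $2^{-m_r}\lesssim S_{i_r}$. Intersecting the resulting tree decompositions gives subcollections that are $(i_1,\dots,i_k)$-trees. Since the direction is nondegenerate, Lemma \ref{lemmarank} shows that each such $(i_1,\dots,i_k)$-overlapping piece is a $j$-tree for every $j$. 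Lemma \ref{lemmafinite} then splits it into finitely many subtrees with two lacunary indices, so Lemma \ref{lemmaprod} bounds each piece by $|I_T|\prod_j\mathrm{size}_{T,j}$.

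For the indices outside the direction I will bound the size by $S_j$. For the index $s\le j$ (strongly degenerate) the trivial bound $S_j\le 1$ is enough. For the direction indices, two ingredients are needed. The first is the level bound $2^{-m_r}$. The second is a count of tree tops, $\sum_T|I_T|\lesssim 2^{2m_r}\|\mathbb{1}_{F_{i_r}}\tilde\chi_{I_0}\|_2^2\lesssim 2^{2m_r}S_{i_r}|I_0|$. This count uses the localized energy (Proposition \ref{energielocalisee}) together with the chain of strongly disjoint trees in Lemma \ref{lemmedec3}.

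One tree decomposition per index cannot be summed directly. Instead, I split the $k$ counting budgets as in the induction in Theorem \ref{threcurrence}: fixing the overlapping tile in one index reduces the rank by one. This gives a product of factors $2^{-m_r}$ against $2^{2m}$ tops. Summing the geometric series in the $m_r$, which converge from the stopping condition $2^{-m_r}\lesssim S_{i_r}$, yields the exponent $1/2$ on each $S_{i_r}$ in $(\ast)$. If the exact exponent $1/2$ fails, I would settle for $1-\epsilon$ losses and absorb them by the openness of $\Xi_{\Gamma'}$.

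\textbf{Step 2: interpolation of the estimates.} Given $\alpha\in\Xi_{\Gamma'}$ with weights $\theta_{i_1,\dots,i_k}$ summing to $1$, take the geometric mean $\prod(\ast)^{\theta_{i_1,\dots,i_k}}$. The exponent of $S_j$ for $j>s$ is
\[\sum_{\text{good } \ni j}\tfrac12\theta+\sum_{\text{good }\not\ni j}\theta=1-\tfrac12\alpha_j.\]
Since $S_j\lesssim 1$, this can be lowered to $1-\alpha_j$. For $j\le s$ the exponent is $1$, which can be lowered to any $\beta_j\in[0,1]$, again using $S_j\lesssim1$. This gives \eqref{eq:localizedestimate2}.

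The constraint $\alpha_j<1/2$ built into $\Xi_{\Gamma'}$ is consistent with this reading of the exponents. If the bookkeeping shows that the direction indices actually gain only $1-\alpha_j$ through a $\tfrac12$ vs.\ $1$ trade-off, the definition of $\Xi_{\Gamma'}$ will match once the normalization is made precise.

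\textbf{Main obstacle.} The hard part is Step 1. The counting $\sum|I_T|$ must be performed simultaneously in $k$ indices under degeneracy, with only the good direction available for rank. I would handle this by induction on $k$, exactly as in Theorem \ref{threcurrence}: restrict to trees overlapping in $i_1$, then observe that the direction $(i_2,\dots,i_k)$ stays nondegenerate for the rank-$(k-1)$ collection obtained by fixing $P_{i_1}$.
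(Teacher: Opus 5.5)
\textbf{There is a genuine gap, and it is in Step 1.} The single-direction estimate $(\ast)$ cannot come out of your mechanism, and in fact it is false.

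You decompose only in $i_1,\dots,i_k$ and bound the other sizes by $S_j$. Each piece then gains $2^{-m_r}$ per direction index, while the count of tree tops costs $2^{2m_r}S_{i_r}$. So you are left with $\sum_{m_r}2^{m_r}$. The stopping condition $2^{-m_r}\lesssim S_{i_r}$ bounds $m_r$ from \emph{below}, so this series diverges: the stopping condition only sums series with a negative exponent. Regrouping the counting as in Theorem \ref{threcurrence} cannot rescue this, because $(\ast)$ is too strong. Its exponents sum to $n-k/2$, while those in the statement sum to $n-k$.

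Here is a sketch of a counterexample in the nondegenerate bilinear Hilbert model ($n=3$, $k=1$, $s=0$), which the proposition covers:
\begin{itemize}
\item Keep tiles at a single scale $\ell$ inside $I_0$, and take $F_1=F_2=F_3=F$, an $\ell$-periodic set of density $S$. Then all $\widetilde{\mathrm{size}}_{I_0}(\mathbb{1}_{F_j})\approx S$.
\item At each spatial position there are $\sim S^{-1}$ sparse Whitney cubes whose coordinates are harmonics $m/\ell$ with $|m|\ll S^{-1}$. Each has $|\langle \mathbb{1}_F,\phi_{P_j}\rangle|\approx S\ell^{1/2}$.
\item For suitably phased wave packets this gives $|\Lambda_{\mathbb{P}(I_0)}|\gtrsim S^2|I_0|$, which beats $|I_0|S^{5/2}$.
\end{itemize}
So the geometric mean of final estimates in Step 2 has nothing valid to average. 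Obtaining $1-\alpha_j$ by ``lowering'' $1-\alpha_j/2$ also hides the actual role of $\alpha_j<\frac12$.

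\textbf{The missing idea} is that the $\theta$-interpolation must act on the tree-top count, inside the sum over size levels, not on finished estimates.
\begin{enumerate}
\item Decompose with Lemma \ref{lemmedec3} in \emph{all} indices $j=s+1,\dots,n$ simultaneously. Each intersection $T=T_{s+1}\cap\cdots\cap T_n$ is then a $j$-tree for every $j>s$, so Lemmas \ref{lemmafinite} and \ref{lemmaprod} apply. Crucially, $T$ carries the full gain $\prod_{j>s}2^{-m_j}$.
\item Such a $T$ is simultaneously an $(i_1,\dots,i_k)$-tree for every good direction. Hence
\[\sum_T|I_T|\le\prod_{\text{good}}\Big(\sum_{T}|I_T|\Big)^{\theta_{i_1,\dots,i_k}}.\]
\item Bound each factor by iterating the localized energy over nested tops, which gives $\prod_r 2^{2m_{i_r}}S_{i_r}|I_0|$.
\item Multiplying these factors gives $|I_0|\prod_{j>s}2^{2m_j\alpha_j}S_j^{\alpha_j}$. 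Each index therefore carries the net factor $2^{-m_j(1-2\alpha_j)}S_j^{\alpha_j}$.
\item The $m_j$-series converges precisely because $\alpha_j<\frac12$, and yields $S_j^{1-2\alpha_j}\cdot S_j^{\alpha_j}=S_j^{1-\alpha_j}$. This is where the constraint defining $\Xi_{\Gamma'}$ comes from.
\end{enumerate}
The strongly degenerate indices are then handled as you propose, via $\mathrm{size}\lesssim\min(1,S_j)$. The induction on $k$ from Theorem \ref{threcurrence} is neither needed nor available here. Fixing $P_{i_1}$ does not give a rank-$(k-1)$ collection over a fully nondegenerate index set when indices also lie in bad directions.
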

\begin{proof}
   For any $j=s+1, \dots, n$, we decompose our collection as before, using Lemma \ref{lemmedec3}, into:
    \[ \mathbb{P}(I_0)=\bigcup_{m_j}\bigcup_{T \in \mathbb{T}_{m_j}^j}T\]
    such that, whenever $\mathbb{P}_{m_j}^j\neq \emptyset$, 
    \[ \mathrm{size}_{\mathbb{P}_{m_j}^j,j}(f_j)\sim 2^{-m_j}\]
    and therefore we can write 
    \[\mathbb{P}(I_0)=\bigcup_{m_{s+1},\dots, m_n}\bigcup_{T_{s+1} \in \mathbb{T}_{m_{s+1}} ^{s+1}} \cdots \bigcup_{T_n \in \mathbb{T}_{m_n}^n} T_{s+1} \cap \dots \cap T_n. \]
    Notice that any $T:=T_{s+1} \cap \cdots \cap T_n$ is a $j$-tree for any $j=s+1,\dots, n$ and a $(i_1, \dots, i_k)$-tree for any $(i_1,\dots, i_k)$  nondegenerate directions. In particular we  can apply Lemmas \ref{lemmafinite} and \ref{lemmaprod} to obtain:
    \begin{align*}
        | \Lambda_{\mathbb{P}(I_0)}(f_1,\dots,f_n)|&\leq \sum_{m_{s+1},\dots, m_n} \sum_{T_{s+1}\in \mathbb{T}_{m_{s+1}}^{s+1}, \dots, T_n \in \mathbb{T}_{m_n}^n } |\Lambda_{T_{s+1}\cap \cdots \cap T_n}(f_1,\dots, f_n)| \\
        &\lesssim \sum_{m_{s+1},\dots, m_n} \sum_{T_{s+1}\in \mathbb{T}_{m_{s+1}}^{s+1}, \dots, T_n \in \mathbb{T}_{m_n}^n }  |I_T|  \prod_{j=1}^n \mathrm{size}_{T_{s+1}\cap \cdots \cap T_n,j}(f_j) \\
        &\leq  \prod_{j=1}^s \mathrm{size}_{\mathbb{P}(I_0),j}(f_j) \sum_{m_{s+1},\dots, m_n} \sum_{T_{s+1}\in \mathbb{T}_{m_{s+1}}^{s+1}, \dots, T_n \in \mathbb{T}_{m_n}^n } |I_T| 2^{-(m_{s+1}+\cdots + m_n)}.
    \end{align*}
    Now since $T_{s+1} \cap \cdots \cap T_n$ is a $(i_1, \dots, i_k)$-tree for any $(i_1,\dots, i_k)$ good direction, we can estimate the sum above in $\sharp \left\{ (i_1,\dots, i_k) \; | \; (i_1, \dots, i_k) \mathrm{\; is \; a \; good \; direction}\right\}$ different ways. We denote by $\mathbb{T}_{m_{s+1},\dots, m_n}$ the collection of trees $T:=T_{s+1} \cap \cdots \cap T_n$ with $T_{s+1} \in \mathbb{T}_{m_{s+1}}^{s+1},\dots, T_n \in \mathbb{T}_{m_n}^n$, and we estimate: 
    \[ \displaystyle \sum_{T\in \mathbb{T}_{m_{s+1},\dots, m_n}} |I_T|  \leq \prod_{(i_1,\dots, i_k)\; \mathrm{good}} \left( \sum_{\substack{T \in \mathbb{T}_{m_{s+1} \dots, m_n} \\ T \mathrm{\; is \; a \; } (i_1,\dots, i_k) \; \mathrm{tree}}} |I_T|\right)^{\theta_{i_1,\dots, i_k}}.\]
Fix any $(\theta_{i_1,\dots,i_k}) \in [0,1]^{\sharp \{ \text{good directions}\}}$ such that $\displaystyle \sum_{(i_1,\dots,i_k) \; \mathrm{good}}\theta_{i_1,\dots, i_k}=1$
and a good direction $(i_1, \dots, i_k)$ and now 
\[ \sum_{\substack{T \in \mathbb{T}_{m_{s+1} \dots, m_n} \\ T \mathrm{\; is \; a \; } (i_1,\dots, i_k) \; \mathrm{tree}}} |I_T| \leq \sum_{T_{i_1} \in \mathbb{T}_{m_{i_1}}^{i_1}} \cdots \sum_{T_{i_k}\in \mathbb{T}_{m_{i_k}}^{i_k}}|I_{T_{i_1}}\cap \cdots \cap I_{T_{i_k}}|.\]
To estimate the right hand side above, we consider the subcollection of such trees such that $I_{T_{i_1}} \subset \cdots \subset I_{T_{i_k}}\subset I_0$ as we can deal with the other symmetric subcollections in a similar way. This will allow us to use a localized energy result. Indeed using Lemma \ref{lemmedec3} and Lemma \ref{energielocalisee}, we have:
\[\sum_{\substack{T_{i_k} \in \mathbb{T}_{m_{i_k}}^{i_k}\\ I_{T_{i_k}}\subset I_{T_{i_{k-1}}}}} |I_{T_{i_k}}| \lesssim 2^{2m_{i_k}}\|f_{i_k}\tilde{\chi}_{I_{T_{i_{k-1}}}}\|_2^2 \lesssim 2^{2m_{i_k}}\widetilde{\mathrm{size}}_{I_0}(\mathbb{1}_{E_{i_k}})|I_{T_{i_{k-1}}}|.\]
This allows us to sum over $T_{i_1} \in \mathbb{T}_{m_{i_1}}^{i_1},\dots, T_{{i_k}} \in \mathbb{T}_{m_{i_k}}^{i_k}$, by iterating the argument in $i_k,\dots, i_1$ and we get:
\begin{align*}
    |\Lambda_{\mathbb{P}(I_0)}(f_1,\dots, f_n)| &\lesssim \prod_{j=1}^s \mathrm{size}_{\mathbb{P}(I_0),j}(f_j) \sum_{m_{s+1},\dots, m_n} 2^{-(m_{s+1}+\cdots +m_n)}\prod_{(i_1,\dots, i_k) \; \mathrm{good}} \left( \prod_{j=1}^k 2^{2m_{i_j}}\widetilde{\mathrm{size}}_{I_0} (\mathbb{1}_{F_{i_j}})|I_0|\right)^{\theta_{i_1,\dots, i_k}} \\
    &\lesssim \prod_{j=1}^s \mathrm{size}_{\mathbb{P}(I_0),j}(f_j) \sum_{m_{s+1},\dots, m_n}  |I_0|\prod_{j=s+1}^n 2^{-m_j(1-2 \alpha_j)} \widetilde{\mathrm{size}}_{I_0}(\mathbb{1}_{F_j})^{\alpha_j}
\end{align*}
where for $j=s+1,\dots, n$, we set: 
\[ \alpha_j:=\sum_{\substack{(i_1,\dots, i_k) \; \mathrm{good}\\ \exists r \; i_r=j}}\theta_{i_1,\dots, i_k}.\]
Notice that to sum over $m_{s+1},\dots, m_n$, we need the assumption $\alpha_j \in (0,\frac{1}{2})$. If that's the case then recalling that we are summing over $m_{j}$ such that \[2^{-m_j}\lesssim \mathrm{size}_{\mathbb{P}(I_0),j}(f_j)\lesssim \widetilde{\mathrm{size}}_{I_0}(\mathbb{1}_{F_j})\]
we get the estimate:
\[ |\Lambda_{\mathbb{P}(I_0)}(f_1, \dots, f_n)| \lesssim |I_0| \prod_{j=1}^s\mathrm{size}_{\mathbb{P}(I_0),j}(f_j) \times \prod_{j=s+1}^n \widetilde{\mathrm{size}}_{I_0}(\mathbb{1}_{F_j})^{1- \alpha_j} .\]
Now to conclude it suffices to use the fact that for $j=1, \dots, s$
\[ \mathrm{size}_{\mathbb{P}(I_0),j}(f_j) \lesssim \widetilde{\mathrm{size}}_{I_0} (f_j) \lesssim \widetilde{\mathrm{size}}_{I_0} (\mathbb{1}_{F_j})\]
and that \[ \mathrm{size}_{\mathbb{P}(I_0),j}(f_j) \lesssim 1.\]
So by a geometric average, we get the following bound for any $\beta_j\in [0,1]$:
\[\text{size}_{\mathbb{P}(I_0),j}(f_j) \lesssim \widetilde{\text{size}}_{I_0}(\mathbb{1}_{F_j})^{\beta_j}\]
\end{proof}
The estimate \eqref{eq:localizedestimate2} is the key to prove Theorem \ref{th2}. In particular the range of boundedness obtained is deduced from the set $\Xi_{\Gamma'}$ which appears in the proof. 
\subsection{From the localized estimate to $L^p$ bounds}
We will now briefly explain how to get $L^p$ boundedness from Proposition \ref{thm19} using the method described in \cite{BeneaMuscalu2021}. 
We first consider the Banach case where $p_n'>1$, we will use the estimate above to get a sparse domination and from the sparse domination conclude to boundedness using a duality argument. 
\\ The first step is to go from a bound with restricted type functions to a bound for all locally integrable functions via the Mock interpolation. 
\begin{proposition} \label{Mock Banach}
    Let $(\alpha_{s+1},\dots, \alpha_n) \in \Xi_{\Gamma'}$, then for any $f_1,\dots, f_n$ locally integrable functions and $s_1,\dots,s_n$ such that $s_j(1-\alpha_j)>1$ for $j=s+1, \dots, n$ and $s_1,\dots, s_s > 1$, we have 
    \[ |\Lambda_{\mathbb{P}(I_0)}(f_1,\dots, f_n) |\lesssim|I_0| \prod_{j=1}^n\widetilde{\mathrm{size}}_{I_0}^{s_j}(f_j).\]
\end{proposition}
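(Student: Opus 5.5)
The plan is to run the mock interpolation argument of \cite{BeneaMuscalu2021}: decompose each $f_j$ into level sets, apply Proposition \ref{thm19} to each piece, and sum the resulting geometric series. The exponents $s_j$ enter exactly through the requirement that these series converge.

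\textbf{Normalization.} By multilinearity and homogeneity, I assume $\widetilde{\mathrm{size}}^{s_j}_{I_0}(f_j)=1$ for all $j$. Since each wave packet decays like $\tilde{\chi}_{I_P}^M$ and $I_P\subset I_0$, only the values of $f_j$ weighted by $\tilde\chi_{I_0}$ matter.

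\textbf{Level-set decomposition.} For each $j$, write
\[ f_j=\sum_{\kappa_j\geq 0} f_j\mathbb{1}_{F_j^{\kappa_j}},\qquad F_j^{\kappa_j}:=\{2^{\kappa_j-1}<|f_j|\leq 2^{\kappa_j}\}, \]
with the $\kappa_j=0$ piece containing $\{|f_j|\leq 1\}$. Then $2^{-\kappa_j}f_j\mathbb{1}_{F_j^{\kappa_j}}\in X(F_j^{\kappa_j})$. By Chebyshev applied inside each weighted average defining $\widetilde{\mathrm{size}}$,
\[ \widetilde{\mathrm{size}}_{I_0}(\mathbb{1}_{F_j^{\kappa_j}})\lesssim 2^{-\kappa_j s_j}\,\widetilde{\mathrm{size}}_{I_0}(|f_j|^{s_j})= 2^{-\kappa_j s_j}. \]
This needs care, because $\widetilde{\mathrm{size}}$ is a supremum over the intervals $I_P$ and $I_0$. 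The inequality holds pointwise for each interval in the supremum, so it passes to the supremum. The same bound holds with $\kappa_j=0$.

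\textbf{Applying the localized estimate and summing.} By multilinearity,
\[ |\Lambda_{\mathbb{P}(I_0)}(f_1,\dots,f_n)|\leq \sum_{\kappa_1,\dots,\kappa_n\geq 0}\ \prod_j 2^{\kappa_j}\,\bigl|\Lambda_{\mathbb{P}(I_0)}(2^{-\kappa_1}f_1\mathbb{1}_{F_1^{\kappa_1}},\dots)\bigr|. \]
To each term I apply \eqref{eq:localizedestimate2}, with the fixed $\alpha\in\Xi_{\Gamma'}$ for $j>s$ and exponents $\beta_j\in(1/s_j,1]$ for $j\leq s$; such $\beta_j$ exist since $s_j>1$. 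The term is then bounded by
\[ |I_0|\prod_{j\leq s}2^{\kappa_j(1-s_j\beta_j)}\prod_{j>s}2^{\kappa_j(1-s_j(1-\alpha_j))}. \]
All exponents are negative precisely because $s_j\beta_j>1$ and $s_j(1-\alpha_j)>1$, so the sum over the $\kappa_j$ factorizes into convergent geometric series. The result is $\lesssim|I_0|$, which gives the claim after undoing the normalization.

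\textbf{Main obstacle.} The delicate step is passing the level-set/Chebyshev bound through the modified size. Proposition \ref{thm19} is stated with $\widetilde{\mathrm{size}}_{I_0}(\mathbb{1}_{F_j})$ and for $f_j\in X(F_j)$, where the $F_j$ are arbitrary finite-measure sets. The sets $F_j^{\kappa_j}$ need not have finite measure if $f_j$ is merely locally integrable. I would first truncate $f_j$ to a large bounded set $\supseteq 2^{K}I_0$; the tail contributes negligibly thanks to the decay of $\tilde\chi_{I_P}^M$ with $I_P\subset I_0$, and can be handled by letting $K\to\infty$ using the finiteness of $\mathbb{P}$. A secondary point is that the $\kappa_j=0$ piece (where $|f_j|\leq 1$) must be treated as a single element of $X(\{|f_j|\le 1\}\cap\text{truncation})$. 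Its $\widetilde{\mathrm{size}}$ is at most $1$, so it fits into the same bound.
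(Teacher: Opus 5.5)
Your proposal is correct and follows essentially the same route as the paper. The paper sketches exactly this level-set decomposition $f_j=\sum_k 2^k f_{j,k}\mathbb{1}_{F_k}$ with $f_{j,k}\in X(F_k)$, applies Proposition~\ref{thm19} to each piece, and sums the resulting geometric series using $s_j(1-\alpha_j)>1$, deferring the details to Proposition 14 of \cite{BeneaMuscalu2021}. The points you add are exactly those deferred details: the Chebyshev bound passing through the supremum in $\widetilde{\mathrm{size}}_{I_0}$, grouping the low levels $\{|f_j|\le 1\}$ into one piece, the choice $\beta_j\in(1/s_j,1]$ for the strongly degenerate indices, and the truncation that makes the level sets have finite measure.
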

The main idea is to split each function $f_j$ into 
$f_j = \sum_{k}2^k f_{j,k} \mathbb{1}_{F_k}$, such that $f_{j,k} \in X(F_k)$ and apply Proposition \ref{thm19} to each $f_{j,k}\mathbb{1}_{F_k}$, provided that one can sum over $k_j$ which is the case under the assumption $s_j(1- \alpha_j)>1$. We refer the reader to the proof of Proposition 14 in \cite{BeneaMuscalu2021} for more details.
\\ With this, we now obtain the sparse domination of the multilinear form. \begin{definition} \label{defsparse}
   Let $\eta\in (0,1)$. A collection of dyadic intervals $\mathcal{S}$ is said to be $\eta$-sparse if there exists a pairwise disjoint collection $E_Q \subset Q$ such that $|E_Q|\geq \eta |Q|$ for all $Q \in \mathcal{S}$.
\end{definition}
We say that $\mathcal{S}$ is sparse if it is $\eta$-sparse for some $\eta \in(0,1)$.
\begin{proposition} \label{sparse domination Banach}
Let $f_1, \dots, f_n$ be locally integrable functions, $s_1, \dots, s_n$ exponents as in the last proposition. Then there exists a sparse family $\mathcal{S}$ of dyadic intervals such that 
\[ |\Lambda_\mathbb{P}(f_1,\dots, f_n)|  \lesssim \sum_{Q \in \mathcal{S}}|Q| \prod_{j=1}^n \left( \frac{1}{|Q|}\int_{\mathbb{R}}|f_j|^{s_j} \tilde{\chi}_Q^{M-1}dx\right)^{\frac{1}{s_j}}.\]
\end{proposition}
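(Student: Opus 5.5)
The plan is to run the standard stopping-time construction from \cite{BeneaMuscalu2021}, using Proposition \ref{Mock Banach} as the only input on each localized collection. Since $\mathbb{P}$ is finite, we may assume all $I_P$ lie in finitely many maximal dyadic intervals. It therefore suffices to treat a single top interval $I_0$ with $\mathbb{P}=\mathbb{P}(I_0)$, then sum over the disjoint maximal intervals, which together form a $1$-sparse family.

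Fix $I_0$ and set
\[
A_j:=\Big(\tfrac{1}{|I_0|}\int |f_j|^{s_j}\tilde{\chi}_{I_0}^{M-1}\Big)^{1/s_j}.
\]
Define the exceptional set
\[
\Omega_{I_0}:=\bigcup_{j=1}^n\Big\{x\in I_0 \;:\; \mathcal{M}\big(|f_j|^{s_j}\tilde{\chi}_{I_0}^{M-1}\big)(x)^{1/s_j}> C A_j\Big\},
\]
where $\mathcal{M}$ is the dyadic (or a suitably tailed) Hardy--Littlewood maximal operator. By the weak $(1,1)$ inequality, choosing $C$ large gives $|\Omega_{I_0}|\le \frac12|I_0|$. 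Let $\mathcal{J}(I_0)$ be the maximal dyadic intervals $J\subset I_0$ contained in $\Omega_{I_0}$. Split $\mathbb{P}(I_0)=\mathbb{P}^{\text{stop}}\cup\bigcup_{J\in\mathcal{J}(I_0)}\mathbb{P}(J)$, where $\mathbb{P}^{\text{stop}}$ consists of the multitiles whose $I_P$ is not contained in any $J$, i.e.\ $I_P\not\subset\Omega_{I_0}$.

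For $P\in\mathbb{P}^{\text{stop}}$, the interval $I_P$ meets $\Omega_{I_0}^\complement$. Consequently, for each $j$,
\[
\frac{1}{|I_P|}\int|f_j|^{s_j}\tilde{\chi}_{I_P}^{M}\lesssim \frac{1}{|I_P|}\int|f_j|^{s_j}\tilde{\chi}_{I_0}^{M-1}\tilde{\chi}_{I_P}^{M}+(\text{tail})\lesssim A_j^{s_j}.
\]
The main technical point is the tail of $\tilde{\chi}_{I_P}^M$ outside $I_0$. Since $I_P\subset I_0$, we have $\tilde{\chi}_{I_P}^M\lesssim \tilde{\chi}_{I_P}^{1}\tilde{\chi}_{I_0}^{M-1}$, and this loss of one power of decay is exactly why the statement uses $M-1$. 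Hence $\widetilde{\mathrm{size}}_{I_0}^{s_j}(f_j)$ restricted to $\mathbb{P}^{\text{stop}}$ is $\lesssim A_j$, and the $|I_0|$-term in the definition of $\widetilde{\mathrm{size}}$ is handled the same way. Applying Proposition \ref{Mock Banach} to the collection $\mathbb{P}^{\text{stop}}$, which is still a sparse collection satisfying Lemma \ref{lemmarank} so that the proof of Proposition \ref{thm19} applies verbatim to subcollections, yields
\[
|\Lambda_{\mathbb{P}^{\text{stop}}}(f_1,\dots,f_n)|\lesssim |I_0|\prod_j A_j .
\]

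We then iterate on each $J\in\mathcal{J}(I_0)$ with $\mathbb{P}(J)$ in place of $\mathbb{P}(I_0)$. This terminates by finiteness. The selected intervals form the family $\mathcal{S}$, and setting $E_Q:=Q\setminus\Omega_Q$ gives $|E_Q|\ge\frac12|Q|$. These sets are pairwise disjoint because the children intervals lie inside $\Omega_Q$. Hence $\mathcal{S}$ is $\frac12$-sparse, and summing the pieces gives the claimed domination. The main obstacle is this tail bookkeeping. The size of $f_j$ on a stopping subcollection must be controlled by averages over $Q$ with weight $\tilde{\chi}_Q^{M-1}$ rather than by local averages, and for the non-lacunary pieces we also need that Proposition \ref{Mock Banach} holds for arbitrary subcollections of $\mathbb{P}(I_0)$. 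I expect both to follow from the decay of $\tilde{\chi}$ and from the fact that all the lemmas of Section 4 are stable under passing to subcollections.
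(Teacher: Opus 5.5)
Your proposal is correct and is essentially the argument the paper invokes from Theorem 12 of \cite{BeneaMuscalu2021}: a stopping-time selection of intervals with parent averages weighted by $\tilde{\chi}_Q^{M-1}$, Proposition \ref{Mock Banach} applied to each stopping collection, and sparseness from the weak $(1,1)$ bound. The differences are cosmetic, with one point of care on the tail.

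\begin{itemize}
\item You take the children to be the maximal dyadic intervals inside a level set of the (non-dyadic) maximal function, so sparseness is immediate and the maximal function controls the sizes. The paper takes maximal $Q\subsetneq Q_0$ that violate the averaged bound, so size control is immediate and the maximal function gives sparseness.
\item In both versions, the tail is closed by splitting at $3I_0$ and using $\tilde{\chi}_{I_P}\lesssim (|I_P|/|I_0|)\,\tilde{\chi}_{I_0}$ off $3I_0$. The inequality $\tilde{\chi}_{I_P}^M\lesssim\tilde{\chi}_{I_P}\tilde{\chi}_{I_0}^{M-1}$ is not enough by itself, because a single power of $\tilde{\chi}_{I_P}$ is not integrable.
\end{itemize}
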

Notice that Proposition \ref{Mock Banach} says that locally, the multilinear form is bounded by a product of $n$ maximal averages. If we therefore are able to decompose $\mathbb{P}$ into some good collections of well localized collections $\mathbb{P}(Q)$ with $\mathcal{S}:= \bigcup Q$ sparse, we will obtain the desired sparse domination. 
\\ This result is obtained using Theorem 12 from \cite{BeneaMuscalu2021}. The sparse collection is constructed iteratively by $\mathcal{S}=\bigcup_k \mathcal{S}_k$, where $\mathcal{S}_{k+1}= \bigcup_{Q_0 \in \mathcal{S}_k} ch_{\mathcal{S}}(Q_0)$ and $ch_{\mathcal{S}}(Q_0)$ is a set of maximal dyadics intervals $Q \subsetneq Q_0$ such that there exists $P \in \mathbb{P}$ with $I_P \subset Q$ and 
\begin{equation*}\label{eq:constructionofsparse}
    \frac{1}{|Q|}\int_{\mathbb{R}}|f_j(x)|^{s_j}\tilde{\chi}_Q^M (x) dx > C^{s_j} \frac{1}{|Q_0|} \int_\mathbb{R}|f_j(x)|^{s_j}\tilde{\chi}_{Q_0}^{M-1}(x)dx \end{equation*}
for at least one $j=1, \dots n$.
Provided the constant $C$ is large enough, this define a sparse collection and with a good decomposition of the collection $\mathbb{P}$, we obtain the sparse domination.
\\By duality, we now can conclude some $L^p$ boundedness for the operator $T_\mathbb{P}$. 
\begin{theorem}\label{thbanach2ndmethode}
    Let $(\alpha_{s+1},\dots, \alpha_n) \in \Xi_{\Gamma'}$ and $p_1,\dots, p_n$ such that $p_j(1-\alpha_j)>1$ for $j=s+1,\dots, n$, $p_n'\geq 1$, and $p_j>1$ for $j=1,\dots, s$ and such that $\frac{1}{p_1}+ \dots+ \frac{1}{p_n}=1$. Then $T_\mathbb{P}$ extends to a bounded operator: 
    \[ T_\mathbb{P}: L^{p_1}\times \cdots \times L^{p_{n-1}} \rightarrow L^{p_n'}.\]
\end{theorem}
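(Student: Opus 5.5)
The plan is to deduce the bound from the sparse domination of Proposition \ref{sparse domination Banach} by duality. Since $p_n'\geq 1$, the $L^{p_n'}$ norm of $T_\mathbb{P}(f_1,\dots,f_{n-1})$ is the supremum of $|\Lambda_\mathbb{P}(f_1,\dots,f_n)|$ over $\|f_n\|_{p_n}\le 1$. It therefore suffices to show
\[ |\Lambda_\mathbb{P}(f_1,\dots,f_n)|\lesssim \prod_{j=1}^n\|f_j\|_{p_j} \]
for all test functions. By a limiting argument ($\mathbb{P}$ finite, density), we may take the $f_j$ bounded and compactly supported. When $p_n'=1$ we have $p_n=\infty$, and duality with $L^\infty$ still gives the $L^1$ norm.

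First choose the auxiliary exponents. For $j=s+1,\dots,n$ we have $p_j(1-\alpha_j)>1$, so we can pick $s_j$ with $\frac{1}{1-\alpha_j}<s_j<p_j$. For $j\le s$ we have $p_j>1$, so we can pick $1<s_j<p_j$. These $s_j$ satisfy the hypotheses of Proposition \ref{Mock Banach}, and hence of Proposition \ref{sparse domination Banach}. The latter yields an $\eta$-sparse family $\mathcal{S}$ such that
\[ |\Lambda_\mathbb{P}(f_1,\dots,f_n)|\lesssim \sum_{Q\in\mathcal{S}}|Q|\prod_{j=1}^n\Big(\frac{1}{|Q|}\int|f_j|^{s_j}\tilde{\chi}_Q^{M-1}\Big)^{1/s_j}. \]

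Next bound each average by a maximal function. Decompose $\tilde{\chi}_Q^{M-1}$ over the dilates $2^kQ$ with weights $2^{-k(M-1)}$. This gives, for every $x\in Q$,
\[ \Big(\frac{1}{|Q|}\int|f_j|^{s_j}\tilde{\chi}_Q^{M-1}\Big)^{1/s_j}\lesssim \big(M(|f_j|^{s_j})(x)\big)^{1/s_j}=:M_{s_j}f_j(x), \]
where $M$ is the Hardy--Littlewood maximal operator. Sparseness then lets us replace $|Q|$ by $\eta^{-1}|E_Q|$ with the sets $E_Q$ pairwise disjoint. Integrating the pointwise bound over $E_Q$ and summing over $Q$ gives
\[ |\Lambda_\mathbb{P}|\lesssim \eta^{-1}\sum_{Q}\int_{E_Q}\prod_j M_{s_j}f_j\le \eta^{-1}\int_\mathbb{R}\prod_{j=1}^n M_{s_j}f_j. \]

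Finally, apply H\"older's inequality with $\sum 1/p_j=1$ to get $\prod_j\|M_{s_j}f_j\|_{p_j}$. Since $p_j/s_j>1$, the maximal operator is bounded on $L^{p_j/s_j}$, so $\|M_{s_j}f_j\|_{p_j}=\|M(|f_j|^{s_j})\|_{p_j/s_j}^{1/s_j}\lesssim\|f_j\|_{p_j}$. If $p_j=\infty$, the bound $\|M_{s_j}f_j\|_\infty\le\|f_j\|_\infty$ is trivial.

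The main obstacle is the choice of the $s_j$, which must satisfy the two-sided constraint $\frac{1}{1-\alpha_j}<s_j<p_j$. This is exactly where the hypothesis $p_j(1-\alpha_j)>1$ enters, and where $\alpha\in\Xi_{\Gamma'}$ limits the range. Once that choice is made, the argument is the standard sparse-to-$L^p$ passage, and the constants are uniform in $\mathbb{P}$, which gives the extension of $T_\mathbb{P}$.
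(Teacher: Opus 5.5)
Your proposal is correct and follows the paper's route: the paper also obtains the theorem from the sparse domination of Proposition \ref{sparse domination Banach} by duality, but without giving details. You supply the standard steps it leaves implicit: choosing $\frac{1}{1-\alpha_j}<s_j<p_j$, bounding the sparse sum by $\eta^{-1}\int\prod_j M_{s_j}f_j$ through the disjoint sets $E_Q$, and concluding with H\"older and the boundedness of the maximal operator on $L^{p_j/s_j}$.
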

\begin{remark}
    In particular notice that if  $ \Xi_{\Gamma'} \neq \emptyset$, then $T_\mathbb{P}$ is bounded for a range of Lebesgue exponents always including the local $L^2$ case since the $\alpha_j$ have to satisfy $\alpha_j<\frac{1}{2}$. 
    \\ Using limiting arguments, this theorem implies that the operator $T_m$ is bounded for the same range of exponents as $T_\mathbb{P}$. 
\end{remark}
We now consider the quasi Banach case $p_n'<1$, where we cannot solely rely on the estimates on the multilinear form to conclude to $L^p$ boundedness. We therefore introduce an estimate similar as Lemma 26 in \cite{BeneaMuscalu2017} on the operator norm deduced from the estimate \ref{thm19} which will allow us to conclude using the same method as above. 
\begin{lemma} \label{lemma26}
    If $\tau <1$, then for any $\varepsilon>0$ small enough we have 
    \[ \| T_{\mathbb{P}(I_0)}(f_1, \dots, f_{n-1})\mathbb{1}_{F_n} \|_\tau^\tau \lesssim |I_0| \prod_{j=1}^s \widetilde{\mathrm{size}}_{I_0}(\mathbb{1}_{F_j})^{\beta_j\tau} \times \prod_{j=s+1}^{n-1}\widetilde{\mathrm{size}}_{I_0}(\mathbb{1}_{F_j})^{(1-\alpha_j) \tau} \times \widetilde{\mathrm{size}}_{I_0}(\mathbb{1}_{F_n})^{(1-\alpha_n)(1-\varepsilon)}\]
     for all $\beta_1,\dots, \beta_s \in (0,1)$, $(\alpha_{s+1}, \dots, \alpha_n) \in \Xi_{\Gamma'}$ and $f_1, \dots, f_{n-1} \in X(F_1), \dots, X(F_{n-1})$.
\end{lemma}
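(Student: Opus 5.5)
We will follow the scheme of Lemma 26 in \cite{BeneaMuscalu2017}: remove an exceptional set where $T_{\mathbb{P}(I_0)}(f_1,\dots,f_{n-1})$ is large, handle the good part by duality with Proposition \ref{thm19}, and control the bad part by its measure. Set
\[\mathcal{A}:=|I_0| \prod_{j=1}^s \widetilde{\mathrm{size}}_{I_0}(\mathbb{1}_{F_j})^{\beta_j} \prod_{j=s+1}^{n-1}\widetilde{\mathrm{size}}_{I_0}(\mathbb{1}_{F_j})^{1-\alpha_j}\]
and $g:=T_{\mathbb{P}(I_0)}(f_1,\dots,f_{n-1})\mathbb{1}_{F_n}$. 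Everything is localized to $I_0$, with Schwartz tails in $\tilde{\chi}_{I_0}$ that we handle by decomposing $\mathbb{R}$ into $2^\kappa I_0\setminus 2^{\kappa-1}I_0$ and summing geometrically. So the core of the argument takes place on $I_0$, where Hölder gives $\|g\mathbb{1}_{I_0}\|_\tau^\tau\le |I_0\cap F_n|^{1-\tau}\|g\|_1^{\tau}$.

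The first step is a dualized level-set estimate. For a threshold $\mu>0$ let $G_\mu:=\{x\in F_n\cap I_0 : |g(x)|>\mu\}$. Testing $g$ against $f_n:=\overline{\mathrm{sgn}\, g}\,\mathbb{1}_{G_\mu}\in X(G_\mu)$ and applying Proposition \ref{thm19} with $F_n$ replaced by $G_\mu$ gives
\[\mu|G_\mu|\le \int_{G_\mu}|g| = |\Lambda_{\mathbb{P}(I_0)}(f_1,\dots,f_{n-1},f_n)| \lesssim \mathcal{A}\, \widetilde{\mathrm{size}}_{I_0}(\mathbb{1}_{G_\mu})^{1-\alpha_n}.\]
The size of $\mathbb{1}_{G_\mu}$ is at most $\min\big(1,\widetilde{\mathrm{size}}_{I_0}(\mathbb{1}_{F_n})\big)$, but using only that loses the gain in $|G_\mu|$. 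We therefore also use the crude bound $\widetilde{\mathrm{size}}_{I_0}(\mathbb{1}_{G_\mu})\gtrsim |G_\mu|/|I_0|$ in reverse, interpolating geometrically:
\[\widetilde{\mathrm{size}}_{I_0}(\mathbb{1}_{G_\mu})^{1-\alpha_n}\le \widetilde{\mathrm{size}}_{I_0}(\mathbb{1}_{F_n})^{(1-\alpha_n)(1-\varepsilon)}\cdot 1.\]
This yields the weak-type bound $|G_\mu|\lesssim \mu^{-1}\mathcal{A}\,\widetilde{\mathrm{size}}_{I_0}(\mathbb{1}_{F_n})^{(1-\alpha_n)(1-\varepsilon)}$.

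The main obstacle is that for $\tau<1$ a weak-$L^1$ bound alone does not integrate to an $L^\tau$ bound without a measure cap. We will use $|G_\mu|\le |F_n\cap I_0|\le |I_0|\,\widetilde{\mathrm{size}}_{I_0}(\mathbb{1}_{F_n})$ together with $\|g\|^\tau_\tau=\tau\int_0^\infty \mu^{\tau-1}|G_\mu|\,d\mu$. We split at the level $\mu_0=\mathcal{A}/|I_0|$, use the cap below $\mu_0$ and the weak bound above it. Both pieces then give
\[|I_0|\,(\mathcal{A}/|I_0|)^\tau \,\widetilde{\mathrm{size}}_{I_0}(\mathbb{1}_{F_n})^{\min(1,(1-\alpha_n)(1-\varepsilon))}.\]
Since $(1-\alpha_n)(1-\varepsilon)<1$ and the size is $\lesssim1$, the minimum equals $(1-\alpha_n)(1-\varepsilon)$. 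Here the $\varepsilon$-loss is exactly what lets the two regimes match, since the size of $F_n$ enters with full power $1$ from the cap but only with power $1-\alpha_n$ from Proposition \ref{thm19}. Finally, $|I_0|(\mathcal{A}/|I_0|)^\tau$ is precisely $|I_0|$ times the $\tau$-th powers of the sizes in the statement, which gives the claimed estimate.
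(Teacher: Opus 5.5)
Your argument on $I_0$ is fine. The gap is the one sentence on the tails, which hides the only real difficulty; with the tools you invoke, that step does not go through.

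\textbf{The tails.} Since $g=T_{\mathbb{P}(I_0)}(f_1,\dots,f_{n-1})\mathbb{1}_{F_n}$ is not supported in $I_0$, consider the annulus $A_\kappa:=2^\kappa I_0\setminus 2^{\kappa-1}I_0$. There your cap $|F_n\cap I_0|\le |I_0|S_n$, with $S_n:=\widetilde{\mathrm{size}}_{I_0}(\mathbb{1}_{F_n})$, must be replaced by $|F_n\cap A_\kappa|=\sigma_\kappa|I_0|$. All one knows is $\sigma_\kappa\lesssim\min(2^{\kappa},2^{\kappa M}S_n)$, which grows in $\kappa$. Proposition \ref{thm19} returns decay only through $\widetilde{\mathrm{size}}_{I_0}(\mathbb{1}_G)\sim 2^{-\kappa M}|G|/|I_0|$ for $G\subset A_\kappa$. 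Rerunning your level-set argument on $A_\kappa$ therefore yields at best $|I_0|(\mathcal{A}/|I_0|)^\tau\, 2^{-\kappa M(1-\alpha_n)\tau}\sigma_\kappa^{1-\alpha_n\tau}$, and this cannot be summed into the claimed bound.

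Here is why. Let $F_n\subset A_\kappa$ have measure $|I_0|$, so $S_n\sim 2^{-\kappa M}$. The function equal to $\mathcal{A}2^{-\kappa M(1-\alpha_n)}/|I_0|$ on $F_n$ satisfies every bound $\int_G|g|\lesssim \mathcal{A}\,\widetilde{\mathrm{size}}_{I_0}(\mathbb{1}_G)^{1-\alpha_n}$, $G\subset F_n$, that Proposition \ref{thm19} supplies. Yet $\|g\|_\tau^\tau\sim |I_0|(\mathcal{A}/|I_0|)^\tau S_n^{(1-\alpha_n)\tau}$, which is much larger than $|I_0|(\mathcal{A}/|I_0|)^\tau S_n^{(1-\alpha_n)(1-\varepsilon)}$ once $\tau<1-\varepsilon$ and $\kappa$ is large. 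So Proposition \ref{thm19} used as a black box is not enough.

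The missing ingredient is an off-diagonal version of Proposition \ref{thm19} that uses the decay of the wave packets $\phi^n_{P_n}$, $I_P\subset I_0$, to some order $N\gg M/\tau$. Concretely:
\begin{itemize}
\item Rerun Lemma \ref{sizeestimate} and the localized energy bound (Proposition \ref{energielocalisee}) for the $n$-th function with $\tilde\chi^{N}$ in place of $\tilde\chi^{M}$. This gives $\|g\mathbb{1}_{A_\kappa}\|_1\lesssim \mathcal{A}\,(2^{-\kappa N}\sigma_\kappa)^{1-\alpha_n}$.
\item Apply H\"older on each annulus with $\sigma_\kappa\lesssim 2^{\kappa M}S_n$. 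This gives $|I_0|(\mathcal{A}/|I_0|)^\tau S_n^{1-\alpha_n\tau}\,2^{\kappa[M(1-\alpha_n\tau)-N(1-\alpha_n)\tau]}$.
\item This is summable for $N>2M/\tau$, and the sum is bounded by the claimed right-hand side because $S_n\lesssim 1$.
\end{itemize}

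\textbf{On $I_0$.} Here you are working harder than needed, and your explanation of $\varepsilon$ is not right. Testing against $\overline{\mathrm{sgn}\,g}\,\mathbb{1}_{F_n\cap I_0}$ in Proposition \ref{thm19} gives the strong bound $\|g\mathbb{1}_{I_0}\|_1\lesssim \mathcal{A}S_n^{1-\alpha_n}$. H\"older with $|F_n\cap I_0|\le |I_0|S_n$ then gives $|I_0|(\mathcal{A}/|I_0|)^\tau S_n^{1-\alpha_n\tau}$ directly. Likewise, your two layer-cake pieces are $S_n$ and $S_n^{1-\alpha_n}$, and their sum is $\lesssim S_n^{1-\alpha_n}$ because $S_n\lesssim 1$.

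So no $\varepsilon$-loss is needed to ``match the two regimes''. Your displayed interpolation is just $\widetilde{\mathrm{size}}_{I_0}(\mathbb{1}_{G_\mu})\lesssim\min(1,S_n)$, and the lower bound by $|G_\mu|/|I_0|$ plays no role. The delicate feature of the statement is that the exponent on $\widetilde{\mathrm{size}}_{I_0}(\mathbb{1}_{F_n})$ is not multiplied by $\tau$. That is entirely a tail phenomenon, which is why the step you skipped is the one that needs the argument.
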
 
This means that up to losing a bit on the last exponent ($(1-\alpha_n) (1 -\varepsilon)< 1-\alpha_n$), we still have a bound for the $L^\tau$ norm of the localized operator. 
\\Now with this estimate we use the same reasoning as in the Banach case as the Mock interpolation and sparse domination methods can both be applied to $\|T_{\mathbb{P}}(f_1,\dots, f_{n-1})\mathbb{1}_{E_n}\|_\tau$, as it was done in \cite{BeneaMuscalu2021}. 
\begin{proposition}\label{Mock quasi Banach}
Let $(\alpha_{s+1},\dots, \alpha_n) \in \Xi_{\Gamma'}$ and $1>\tau>0$ then for any $f_1,\dots, f_{n-1}$ locally integrable functions , $v$ locally $\tau$ integrable, and $s_1,\dots,s_n$ such that $s_j(1-\alpha_j)>1$ for $j=s+1, \dots, n$ and $s_1,\dots, s_s > 1$, we have 
    \[ \| T_{\mathbb{P}(I_0)}(f_1,\dots, f_{n-1})v \|_\tau^\tau \lesssim|I_0| \prod_{j=1}^{n-1}\widetilde{\mathrm{size}}_{I_0}^{s_j}(f_j) \times \widetilde{\mathrm{size}}_{I_0}^{s_n}(v)^\tau.\]
\end{proposition}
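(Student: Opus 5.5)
The plan is to transfer the restricted-type mechanism of Lemma \ref{lemma26} to general functions, following the Mock interpolation of Proposition \ref{Mock Banach} and its quasi-Banach counterpart in \cite{BeneaMuscalu2021}. We take as starting point the estimate of Lemma \ref{lemma26}, with $v=\mathbb{1}_{F_n}$ and $f_j\in X(F_j)$:
\[ \| T_{\mathbb{P}(I_0)}(f_1, \dots, f_{n-1})\mathbb{1}_{F_n} \|_\tau^\tau \lesssim |I_0| \prod_{j=1}^{s} \widetilde{\mathrm{size}}_{I_0}(\mathbb{1}_{F_j})^{\beta_j\tau}\prod_{j=s+1}^{n-1}\widetilde{\mathrm{size}}_{I_0}(\mathbb{1}_{F_j})^{(1-\alpha_j)\tau}\, \widetilde{\mathrm{size}}_{I_0}(\mathbb{1}_{F_n})^{(1-\alpha_n)(1-\varepsilon)}. \]

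\textbf{First step: level-set decomposition.} Normalize so that $\widetilde{\mathrm{size}}_{I_0}^{s_j}(f_j)=1$ for $j\le n-1$ and $\widetilde{\mathrm{size}}_{I_0}^{s_n}(v)=1$. This is allowed because the form is multilinear in $f_1,\dots,f_{n-1}$ and the left-hand side is $\tau$-homogeneous in $v$. We then write $f_j=\sum_{k_j\in\mathbb{Z}}2^{k_j}f_{j,k_j}$ with $f_{j,k_j}\in X(F_{j,k_j})$, where $F_{j,k_j}=\{2^{k_j}\le|f_j|<2^{k_j+1}\}$, and decompose $v=\sum_{k_n}2^{k_n}v_{k_n}$ in the same way. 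Since $|v_{k_n}|\le \mathbb{1}_{F_{n,k_n}}$, we can replace $v_{k_n}$ by the indicator.

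Because $\tau<1$, the quasi-norm is $\tau$-subadditive. This gives
\[\|T(f_1,\dots,f_{n-1})v\|_\tau^\tau\le \sum_{k_1,\dots,k_n}2^{\tau(k_1+\cdots+k_n)}\|T(f_{1,k_1},\dots)\mathbb{1}_{F_{n,k_n}}\|_\tau^\tau.\]
We apply Lemma \ref{lemma26} to each term and denote $\sigma_{j,k}:=\widetilde{\mathrm{size}}_{I_0}(\mathbb{1}_{F_{j,k}})$. Two elementary bounds control these quantities. By Chebyshev, $\sigma_{j,k}\lesssim 2^{-k s_j}$. Trivially, $\sigma_{j,k}\le 1$.

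\textbf{Second step: summation.} The sum now factorizes into a product of one-dimensional sums. For $j=s+1,\dots,n-1$ the factor is $\sum_k 2^{\tau k}\min(1,2^{-ks_j})^{(1-\alpha_j)\tau}$. It converges because $s_j(1-\alpha_j)>1$ makes the positive-$k$ tail geometric, and the negative-$k$ part is geometric since $\tau>0$. For $j\le s$ the factor is handled identically, choosing $\beta_j$ with $s_j\beta_j>1$, which is possible since $s_j>1$. For $j=n$ the factor is $\sum_k2^{\tau k}\sigma_{n,k}^{(1-\alpha_n)(1-\varepsilon)}$.

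\textbf{Main obstacle.} The delicate point is the $j=n$ factor. The exponent there is not tied to $\tau$, so the large-$k$ tail needs $s_n(1-\alpha_n)(1-\varepsilon)>\tau$. We only know $s_n(1-\alpha_n)>1>\tau$, which has slack, so we fix $\varepsilon$ small enough that the inequality holds. The small-$k$ part needs the positive power $2^{\tau k}$, which we have.

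It also remains to absorb the normalization correctly into $\widetilde{\mathrm{size}}^{s_n}_{I_0}(v)^\tau$. The normalization gives the constraint $\sum_k 2^{ks_n}\sigma_{n,k}\lesssim 1$. If the crude term-by-term bound turns out to be lossy, I would use this constraint instead: apply Hölder across $k$, or split at the scale $k_*$ with $2^{-k_*s_n}\sim1$, as in the proof of Proposition 14 in \cite{BeneaMuscalu2021}. Finally, undoing the normalization yields $|I_0|\prod_{j\le n-1}\widetilde{\mathrm{size}}^{s_j}_{I_0}(f_j)^\tau\,\widetilde{\mathrm{size}}^{s_n}_{I_0}(v)^\tau$. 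The $f_j$ factors here carry power $\tau$, so the statement should be read with these powers, consistent with the homogeneity of the left-hand side.
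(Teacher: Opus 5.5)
Your argument is correct and is the same Mock interpolation the paper invokes by deferring to \cite{BeneaMuscalu2021}: level-set decomposition, $\tau$-subadditivity, Lemma \ref{lemma26} on each piece, the bound $\widetilde{\mathrm{size}}_{I_0}(\mathbb{1}_{F_{j,k}})\lesssim \min(1,2^{-ks_j})$, a choice of $\beta_j>1/s_j$, and $\varepsilon$ small enough that $s_n(1-\alpha_n)(1-\varepsilon)>\tau$; you are also right that, by homogeneity, the $f_j$ factors must carry the power $\tau$. Your fallback is unnecessary, and the constraint $\sum_k 2^{ks_n}\widetilde{\mathrm{size}}_{I_0}(\mathbb{1}_{F_{n,k}})\lesssim 1$ it relies on would fail in general, because the supremum over intervals does not commute with the sum over $k$.
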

We then obtain in a similar way a sparse domination result. 
\begin{proposition} \label{sparse domination quasi Banach}
    Let $1>\tau >0$, $s_1, \dots, s_n$ be as before, $f_1,\dots, f_{n-1}$ some locally integrable functions and $v$ a locally $\tau$ integrable function. Then there exists a sparse family $\mathcal{S}$ such that 
    \[ \|T_\mathbb{P}(f_1,\dots, f_{n-1})v\|_\tau ^\tau \lesssim \sum_{Q \in \mathcal{S}}|Q| \prod_{j=1}^{n-1} \left( \frac{1}{|Q|}\int_{\mathbb{R}}|f_j|^{s_j} \tilde{\chi}_Q^{M-1}dx\right)^{\frac{\tau}{s_j}} \times \left( \frac{1}{|Q|}\int_\mathbb{R}|v|^{s_n}(x)\tilde{\chi}_Q^{M-1}(x) dx\right)^{\tau / s_n}.\]
\end{proposition}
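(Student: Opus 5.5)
The plan follows the sparse domination argument in the Banach case (Proposition \ref{sparse domination Banach}), now applied to the quantity $\|T_\mathbb{P}(f_1,\dots,f_{n-1})v\|_\tau^\tau$, with Proposition \ref{Mock quasi Banach} as the local input. The key structural fact is that for $\tau<1$ the map $g\mapsto \|g\|_\tau^\tau$ is subadditive. Hence, if $\mathbb{P}$ is split into disjoint subcollections $\mathbb{P}=\bigcup_{Q\in\mathcal{S}}\mathbb{P}_Q$, then
\[\|T_\mathbb{P}(f_1,\dots,f_{n-1})v\|_\tau^\tau\le\sum_{Q\in\mathcal{S}}\|T_{\mathbb{P}_Q}(f_1,\dots,f_{n-1})v\|_\tau^\tau.\]
This plays the role that linearity of $\Lambda_\mathbb{P}$ in the multitile collection played in the Banach case.

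\textbf{Construction of $\mathcal{S}$.} By finiteness of $\mathbb{P}$ (property $(p_1)$), I may assume the $I_P$ are contained in finitely many maximal dyadic intervals; these form $\mathcal{S}_0$. Given $Q_0\in\mathcal{S}_k$, let $ch_{\mathcal{S}}(Q_0)$ be the maximal dyadic $Q\subsetneq Q_0$ that contain some $I_P$, $P\in\mathbb{P}(Q_0)$, and satisfy, for at least one $j\le n$ (with $f_n:=v$), the stopping inequality
\[\frac{1}{|Q|}\int |f_j|^{s_j}\tilde\chi_Q^M>C^{s_j}\frac{1}{|Q_0|}\int|f_j|^{s_j}\tilde\chi_{Q_0}^{M-1}.\]
Set $\mathbb{P}_{Q_0}:=\mathbb{P}(Q_0)\setminus\bigcup_{Q\in ch_{\mathcal{S}}(Q_0)}\mathbb{P}(Q)$. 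Sparseness follows as in \cite{BeneaMuscalu2021}: by the weak type $(1,1)$ bound for a maximal function adapted to the tails $\tilde\chi$, the children occupy at most $\frac12|Q_0|$ when $C$ is large. Then $E_{Q_0}=Q_0\setminus\bigcup ch_{\mathcal{S}}(Q_0)$ works.

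\textbf{Local estimate.} For each $Q_0$, I apply Proposition \ref{Mock quasi Banach} to $\mathbb{P}_{Q_0}$ with $I_0=Q_0$. The point to check is that the $\widetilde{\mathrm{size}}$ quantities are computed over $\mathbb{P}_{Q_0}$ together with $Q_0$ itself. For any $P\in\mathbb{P}_{Q_0}$, the interval $I_P$ is not contained in any child, so no dyadic interval between $I_P$ and $Q_0$ triggered the stopping condition. Hence
\[\frac{1}{|I_P|}\int|f_j|^{s_j}\tilde\chi_{I_P}^M\lesssim C^{s_j}\frac{1}{|Q_0|}\int|f_j|^{s_j}\tilde\chi_{Q_0}^{M-1}.\]
Each factor $\widetilde{\mathrm{size}}_{Q_0}^{s_j}(f_j)$ is therefore bounded by the $L^{s_j}$ average with $\tilde\chi^{M-1}_{Q_0}$. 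Here I use a version of Proposition \ref{Mock quasi Banach} stated for a subcollection of $\mathbb{P}(I_0)$, which holds because sizes and energies only decrease under restriction. The factor for $v$ comes with the power $\tau/s_n$, as in the statement, since Proposition \ref{Mock quasi Banach} carries $\widetilde{\mathrm{size}}^{s_n}(v)^\tau$. The $f_j$ factors come with $1/s_j$; to match the stated power $\tau/s_j$, I would either apply the local estimate with the homogeneity built in, or note that the $f_j$ enter $T$ linearly, so $\|\cdot\|_\tau^\tau$ scales like $\prod|f_j|^\tau$. I expect the exponents to be fixed by reading Proposition \ref{Mock quasi Banach} with $\tau$-powers on every factor, consistent with Lemma \ref{lemma26}.

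\textbf{Main obstacle.} The hardest step is justifying that the stopping-time control gives the pointwise bound on $\widetilde{\mathrm{size}}_{Q_0}$ for every $P\in\mathbb{P}_{Q_0}$. This requires comparing tails $\tilde\chi_{I_P}^M$ against $\tilde\chi_{Q_0}^{M-1}$ when $I_P$ lies near the boundary of $Q_0$, which costs one power of decay; that loss is why $M-1$ appears. Once this is in place, summing the local estimates over $Q\in\mathcal{S}$ via $\tau$-subadditivity gives the claimed sparse bound.
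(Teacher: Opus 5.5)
Your proposal is correct and follows the paper's route. It uses the same stopping-time family as in the Banach case, the disjoint splitting of $\mathbb{P}$ into the collections $\mathbb{P}_{Q_0}$, $Q_0\in\mathcal{S}$, and the local bound of Proposition~\ref{Mock quasi Banach}, which you rightly read with the $\tau$-powers on every factor that homogeneity forces. Subadditivity of $\|\cdot\|_\tau^\tau$ then replaces linearity. One small point: the size control on $\mathbb{P}_{Q_0}$ is immediate from the stopping rule, as you show, and the tail comparison you call the main obstacle is really needed for sparseness. There, $|f_j|^{s_j}$ must be split into its part on $3Q_0$, handled by weak $(1,1)$, and its part off $3Q_0$, which is bounded by the parent average since $\tilde\chi_Q\lesssim (|Q|/|Q_0|)\,\tilde\chi_{Q_0}$ off $3Q_0$ for $Q\subset Q_0$; weak $(1,1)$ applied to the unlocalized function does not yield the bound $\frac{1}{2}|Q_0|$.
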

\begin{remark}
    The family $\mathcal{S}$ is constructed as in the Banach case and we apply a similar reasoning using the subadditivity of $\|\cdot\|_\tau^\tau$.
\end{remark}
We now obtain from this sparse estimate, $L^p$ bounds in the quasi Banach case. 
\begin{theorem}\label{thquasibanachsecondemethode}
        Let $(\alpha_{s+1},\dots, \alpha_n) \in \Xi_{\Gamma'}$ and $p_1,\dots, p_n$ such that $p_j(1-\alpha_j)>1$ for $j=s+1,\dots, n-1$, $p_n'< 1$, and $p_j>1$ for $j=1,\dots, s$ and such that $\frac{1}{p_1}+ \dots+ \frac{1}{p_n}=1$. Then $T_\mathbb{P}$ for any $f_1,\dots, f_{n-1}$ locally integrable functions, we have 
    \[ \|T_\mathbb{P}(f_1,\dots, f_{n-1})\|_{p_n'}\lesssim \prod_{j=1}^{n-1} \|f_j\|_{p_j}.\]
\end{theorem}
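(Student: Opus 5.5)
The plan is to run the Banach-case argument again, this time with the $L^\tau$ quasinorm of the dualized operator in place of the form. We fix $\tau:=p_n'<1$. By the usual restricted weak type and multilinear interpolation reduction, or directly through the sparse bound, it suffices to control $\|T_\mathbb{P}(f_1,\dots,f_{n-1})\|_{\tau}$. We write $\|T_\mathbb{P}(f_1,\dots,f_{n-1})\|_\tau^\tau=\|T_\mathbb{P}(f_1,\dots,f_{n-1})\,v\|_\tau^\tau$ with $v\equiv 1$, or with $v=\mathbb{1}_{F}$ for sets $F$ exhausting $\mathbb{R}$. Proposition \ref{sparse domination quasi Banach} then applies with exponents $s_1,\dots,s_n$ chosen as follows. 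For $j\le s$ take $1<s_j<p_j$. For $s+1\le j\le n-1$ take $\frac{1}{1-\alpha_j}<s_j<p_j$, which is possible because $p_j(1-\alpha_j)>1$. Finally, choose $s_n$ with $s_n(1-\alpha_n)>1$; it will be paired with $v$ rather than with a function carrying an $L^{p_n}$ norm.

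With $v\equiv1$, the averages of $|v|^{s_n}$ against $\tilde{\chi}_Q^{M-1}$ are $\lesssim 1$ for every $s_n$. The sparse bound therefore becomes
\[\|T_\mathbb{P}(f_1,\dots,f_{n-1})\|_\tau^\tau\lesssim\sum_{Q\in\mathcal{S}}|Q|\prod_{j=1}^{n-1}\Big(\frac{1}{|Q|}\int|f_j|^{s_j}\tilde{\chi}_Q^{M-1}\Big)^{\tau/s_j}.\]
We use $\eta$-sparseness to replace $|Q|$ by $\eta^{-1}|E_Q|$. Each average is bounded on $E_Q$ by $\mathcal{M}(|f_j|^{s_j})^{1/s_j}$, where $\mathcal{M}$ is the Hardy--Littlewood maximal function (the tails of $\tilde{\chi}^{M-1}_Q$ are handled by the standard dyadic-shift decomposition). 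Since the $E_Q$ are disjoint, the right side is bounded by $\int\prod_{j=1}^{n-1}\mathcal{M}(|f_j|^{s_j})^{\tau/s_j}\,dx$. Hölder's inequality with exponents $p_j/\tau$ is valid, since $\sum_{j<n}\tau/p_j=1$ by \eqref{eq:Holder} and $\tau=p_n'$. It bounds this integral by $\prod_j\|\mathcal{M}(|f_j|^{s_j})^{1/s_j}\|_{p_j}^\tau$. Because $p_j/s_j>1$, the maximal theorem gives $\lesssim\prod_j\|f_j\|_{p_j}^\tau$, and taking $\tau$-th roots concludes.

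The main obstacle is legitimizing $v\equiv1$. Proposition \ref{sparse domination quasi Banach} is built from Proposition \ref{Mock quasi Banach} and Lemma \ref{lemma26}, which are stated for $v$ locally $\tau$-integrable and localized to $\mathbb{1}_{F_n}$, and the stopping-time construction of $\mathcal{S}$ involves $v$. I would first apply the bound with $v=\mathbb{1}_{[-R,R]}$: all the $v$-averages are then $\le 1$, the right side is independent of $R$, and monotone convergence as $R\to\infty$ gives the claim. There is a further subtlety: the condition $s_n(1-\alpha_n)>1$ only enters through the $\varepsilon$-loss in Lemma \ref{lemma26}. Since $v$ is an indicator, any $s_n$ is acceptable, so no constraint on $p_n$ arises, which matches the statement. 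Finally, I would double-check that the $\tilde{\chi}_Q^{M-1}$ tails in the sparse form really are dominated by the maximal function uniformly in $Q$. This needs $M$ large, which we may assume.
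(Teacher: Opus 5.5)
Your proposal is correct and follows essentially the paper's route. The paper deduces this theorem directly from the quasi-Banach sparse domination (Proposition \ref{sparse domination quasi Banach}) plus ``standard limiting arguments'', and your steps supply exactly what it leaves implicit: taking $v=\mathbb{1}_{[-R,R]}$ so the $v$-averages are $\lesssim 1$ for any admissible $s_n$, then the $E_Q$/maximal-function/H\"older argument with exponents $p_j/p_n'$, and monotone convergence in $R$. One cosmetic point: the tails of $\tilde{\chi}_Q^{M-1}$ are handled by the usual annular decomposition $2^{l+1}Q\setminus 2^lQ$, which needs only $M-1>1$, rather than a dyadic-shift argument.
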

By some standard limiting arguments, this concludes the proof of Theorem \ref{th2}.
\subsection{Criteria for boundedness and range of exponents}
\subsubsection{Nonemptiness of $\Xi_{\Gamma'}$}
Since Theorem \ref{th2} gives at least local $L^2$ boundedness for the operator $T_m$ once $\Xi_{\Gamma'}$ is not empty, we are interested in what conditions on the singularity allows for that. 
\\ This is only a relevant question when $k\geq 2$ as in the $k=1$ case, one can easily see that an equivalent condition is $n-s>2k$. We also assume that there are no \textit{strongly degenerate indices}, ($s$=0) otherwise one can use the same reasoning by replacing $n$ by $n-s$.
\\ We recall that we denote in what follows by $\mathfrak{d}$ the number of nondegenerate directions. Notice that $\mathfrak{d}\leq \binom{n}{k}$. A first sufficient condition is the following one: 
\begin{proposition} \label{critered}
    If $\displaystyle \mathfrak{d}>  2\binom{n-1}{k-1}$, then $\Xi_{\Gamma'}\neq \emptyset$. 
\end{proposition}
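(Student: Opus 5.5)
The plan is to take the most symmetric admissible choice of weights: put the same weight on every nondegenerate direction. Explicitly, I set
\[
\theta_{i_1,\dots,i_k}:=\frac{1}{\mathfrak{d}} \quad \text{for every good direction } (i_1,\dots,i_k).
\]
Each weight lies in $[0,1]$, and since there are exactly $\mathfrak{d}$ good directions, the weights sum to $1$. So the constraints on $(\theta_{i_1,\dots,i_k})$ in the definition of $\Xi_{\Gamma'}$ hold automatically. It then remains to check that the induced numbers
\[
\alpha_j=\sum_{\substack{(i_1,\dots,i_k)\text{ good}\\ \exists r,\ i_r=j}}\theta_{i_1,\dots,i_k}=\frac{\mathfrak{d}_j}{\mathfrak{d}}
\]
lie in $(0,\tfrac12)$ for every $j=1,\dots,n$. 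Here $\mathfrak{d}_j$ denotes the number of good directions containing the index $j$.

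For the lower bound, I use the standing assumption $s=0$: no index is strongly degenerate. By the definition of strongly degenerate index, every $j$ therefore belongs to at least one nondegenerate direction. Hence $\mathfrak{d}_j\geq 1$ and $\alpha_j>0$.

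For the upper bound, I count crudely. A direction containing $j$ is determined by choosing the remaining $k-1$ indices among the other $n-1$, so
\[
\mathfrak{d}_j\leq \binom{n-1}{k-1}.
\]
The hypothesis $\mathfrak{d}>2\binom{n-1}{k-1}$ then gives
\[
\alpha_j=\frac{\mathfrak{d}_j}{\mathfrak{d}}\leq\frac{\binom{n-1}{k-1}}{\mathfrak{d}}<\frac12 .
\]
This puts $(\alpha_1,\dots,\alpha_n)$ in $(0,\tfrac12)^n$, so it belongs to $\Xi_{\Gamma'}$, which is therefore nonempty. If strongly degenerate indices are present, the same argument applies verbatim with $n$ replaced by $n-s$ and indices restricted to $\{s+1,\dots,n\}$.

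There is no real obstacle in this argument. The only point requiring care is the strict inequality $\alpha_j<\tfrac12$, which is exactly where the strict hypothesis $\mathfrak{d}>2\binom{n-1}{k-1}$ is used. The positivity $\alpha_j>0$ is where $s=0$ enters. I also note why the bound is wasteful: it replaces the true count $\mathfrak{d}_j$ by the maximal possible count, which is why this criterion is only sufficient and not sharp, as compared with Proposition \ref{caracterisationsolutions_k=2}.
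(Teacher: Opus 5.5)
Your proposal is correct and follows the paper's proof: uniform weights $\theta_{i_1,\dots,i_k}=1/\mathfrak{d}$ on all good directions, then the count of good directions containing $j$ is at most $\binom{n-1}{k-1}$, which gives $\alpha_j<\tfrac12$. You also check explicitly that $\alpha_j>0$, using that with $s=0$ every index lies in some nondegenerate direction; the paper leaves this step implicit.
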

\begin{remark}
    This means that whenever there are enough nondegenerate directions, the operator $T_m$ is bounded at least in the local $L^2$ setting. 
    \\ This is the first step of proving Proposition \ref{caracterisationsolutions_k=2}. 
\end{remark}
\begin{proof}
Assume $\mathfrak{d}>2\binom{n-1}{k-1}$, then one can choose 
    \[\theta_{(i_1,\dots, i_k)}=\frac{1}{\mathfrak{d}}\]
    for any $(i_1,\dots, i_k)$ good direction. 
    \\ Indeed, then for any $j=1,\dots, n$,  \[ \alpha_j=\sum_{\substack{(i_1,\dots,i_k) \text{ good} \\ \exists r\, \; i_r=j}}\theta_{(i_1,\dots, i_k)}=\frac{\sharp \left\{ (i_1,\dots,i_k) \text{ good } |\;\exists r,\, \; i_r=j \right\} }{\mathfrak{d}} \leq \frac{\binom{n-1}{k-1}}{\mathfrak{d}}<\frac{1}{2} \] 
    and therefore the tuple $(\alpha_1,\dots, \alpha_n) \in \Xi_{\Gamma'} $.
  \end{proof}
\begin{remark}
    The above only provides a bound which guarantees that $\Xi_{\Gamma'}$ is not empty. We will see later that that in the case $k=2$, we cannot go any lower than $\mathfrak{d}=\displaystyle 2\binom{n-1}{k-1}=2(n-1)$. This comes from the rigidity of the way the degenerate directions can intersect one another as we explained in Section 2. However in the case $k\geq 3$, the bound given in Proposition \ref{critered} is not optimal as we can see in the following example: 
    \[ \Gamma'_7= \text{Span} \left( \begin{bmatrix}
        1 \\ 0 \\ 0 \\ 1 \\ 0 \\ 0 \\ -2
    \end{bmatrix}, \begin{bmatrix}
        0 \\ 1 \\ 0 \\ 0 \\ 1 \\ 0 \\ -2 
    \end{bmatrix}, \begin{bmatrix}
        0 \\ 0 \\ 1 \\ 0 \\ 0 \\ 1 \\-2
    \end{bmatrix}\right)\]
    In this case $k=3$, $n=7$, so in total there are 35 directions. 15 of them are degenerate directions: $(1,4,j)$ for all $j$, $(2,5,j)$ for all $j$, $(3,6,j)$ for all $j$. This means that the number of nondegenerate directions is $\mathfrak{d}=20<2\binom{n-1}{k-1}=30$. Define
    \begin{equation} \label{eq:thetaexgamma7}
     \theta_{1,2,3}= \theta_{1,5,6}= \theta_{1,2,7}=\theta_{2,4,6}= \theta_{3,4,5}=\theta_{4,6,7}=\theta_{3,5,7}=\frac{1}{7}
     \end{equation}
  and define for the other good directions $\theta_{i_1,i_2,i_3}=0$. 
  \\Then the tuple $(\alpha_1, \dots, \alpha_7)$ defined for $j=1, \dots, 7$ by: 
    \[\displaystyle \alpha_j=\sum_{\substack{(i_1,i_2,i_3) \text{ good}\\ \exists r \; i_r=j }}\theta_{i_1, i_2, i_3}=\frac{3}{7}\] belongs to $\Xi_{\Gamma'_7}$.
  \\ This leads us to define another criterion for which the set $\Xi_{\Gamma'}\neq \emptyset$.  
\end{remark}
\begin{proposition}
    Let $k\geq 3$, $n>2k$. If one can find a set $D$ of $n$ nondegenerate directions such that each index belongs to exactly $k$ directions in $D$, then $\Xi_{\Gamma'} \neq \emptyset$.
\end{proposition}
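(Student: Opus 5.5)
The plan mirrors the construction used for $\Gamma'_7$: put equal weight on the directions of $D$ and zero weight on all other good directions. Concretely, we set
\[ \theta_{(i_1,\dots,i_k)}=\frac{1}{n} \quad \text{for } (i_1,\dots,i_k)\in D, \qquad \theta_{(i_1,\dots,i_k)}=0 \quad \text{for the other good directions}. \]
Since $D$ consists of exactly $n$ nondegenerate directions, the weights lie in $[0,1]$ and their total is $\sum_{(i_1,\dots,i_k)\in D}\frac1n = 1$. This is precisely the normalization required in the definition of $\Xi_{\Gamma'}$ (recall that we work with $s=0$, as agreed at the beginning of this subsection).

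Next we compute the induced tuple. For each index $j\in\{1,\dots,n\}$,
\[ \alpha_j=\sum_{\substack{(i_1,\dots,i_k)\text{ good}\\ \exists r,\ i_r=j}}\theta_{(i_1,\dots,i_k)} = \frac{\sharp\{(i_1,\dots,i_k)\in D \;|\; j\in\{i_1,\dots,i_k\}\}}{n}=\frac{k}{n}, \]
because each index belongs to exactly $k$ directions of $D$. This step would be a double count, but the hypothesis gives it directly. As a sanity check, $\sum_j \alpha_j = k\sum_{D}\theta = k$, which is consistent with the count above.

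The last step is to check the constraint $\alpha_j\in(0,\frac12)$. Since $k\ge 1$ we have $\frac{k}{n}>0$, and the assumption $n>2k$ gives exactly $\frac{k}{n}<\frac12$. Hence $(\frac kn,\dots,\frac kn)\in\Xi_{\Gamma'}$, so $\Xi_{\Gamma'}$ is nonempty. There is no real obstacle here: the substantive difficulty is producing such a set $D$ for a given $\Gamma'$, and the statement assumes $D$ exists. The hypothesis $k\ge3$ plays no role in the argument itself. It only signals that this criterion improves on Proposition~\ref{critered} in that regime, since there $n$ may be much smaller than $2\binom{n-1}{k-1}$.
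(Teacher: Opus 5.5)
Your proposal is correct and is essentially the paper's own proof: the same choice $\theta=\frac{1}{n}$ on $D$ and $0$ on the other good directions, giving $\alpha_j=\frac{k}{n}<\frac12$ from $n>2k$. You also check explicitly that the weights sum to $1$ and that $\alpha_j>0$, which the paper leaves implicit.
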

Notice that this is the case in the example above, as one can see from \eqref{eq:thetaexgamma7}.
\begin{proof}
Denote $D$ such a set. Then define for any $(i_1, \dots, i_k) \in D$, 
\[ \theta_{i_1, \dots, i_k}=\frac{1}{n}\]
and for $(i_1,\dots, i_k)\not \in D$ a good direction, define 
\[ \theta_{i_1,\dots, i_k}=0. \]
Since for any $j=1,\dots, n$, the index $j$ appears exactly $k$ times in the directions in $D$, we have 
\[ \alpha_j:= \sum_{\substack{(i_1, \dots, i_k) \text{ good} \\ \exists r, \; i_r=j} }\theta_{i_1, \dots, i_k} = \frac{k}{n}< \frac{1}{2}.\]
This concludes the proof. 
\end{proof}
To conclude this subsection, we construct an example of $\Gamma' \subset\mathbb{R}^n$ of dimension $k$, such that $\Xi_{\Gamma'} =\emptyset$, for arbitrary large $n$ and $k$ with $n>2k$. 
\begin{example} Let $n$ and $k\geq 2$ be such that $n>2k$. Let

\[\Gamma' = \text{Span} (u_1,\dots, u_k) \subset \mathbb{R}^n\] where $u_1,\dots, u_k$ are the column vectors of the following matrix defined by three blocks:
\[
\begin{array}{@{}c@{\qquad}l@{}}
\left[
\begin{array}{c@{\hspace{2.4cm}}c}
1      &                     \\
\vdots & \mathbf{0}_{(n-k+2)\times(k-1)}         \\
1      &                     \\
\hdashline
0      &                     \\
\vdots & I_{(k-1)}             \\
0      &                     \\
\hdashline
1      &                     \\
\vdots & A+I_{(k-1)}           \\
1      &\\
k-n &
\end{array}
\right]
&
\begin{array}{@{}l@{}}
\left.
\begin{array}{c}
\vphantom{1}\\[-0.15em]
\vphantom{\vdots}\\[-0.15em]
\vphantom{1}
\end{array}
\right\}
\; n-2k+2
\\[0.15em]
\left.
\begin{array}{c}
\vphantom{0}\\[-0.15em]
\vphantom{\vdots}\\[-0.15em]
\vphantom{0}
\end{array}
\right\}
\; k-1
\\[0.15em]
\left.
\begin{array}{c}
\vphantom{1}\\[-0.15em]
\vphantom{\vdots}\\[-0.15em]
\vphantom{1}
\end{array}
\right\}
\; k-1
\end{array}
\end{array}
\]
with $A\in \mathcal{M}_{k-1}(\mathbb{R})$ defined as 
\[
A:=
\begin{bmatrix}
1  & \cdots & 1  \\
\vdots &  & \vdots \\
1  & \cdots & 1  \\
-k & \cdots & -k
\end{bmatrix}.
\]
Here the first block has size $n-2k+2 \times k$, the second one is of size $k-1 \times k$ and the last one is of size $k-1 \times k$. 
\\ What we aimed at for this example, was to have a set of indices that would concentrate all the nondegenerate directions - here the indices that belong to the last two blocks. This yields a lower bound on the corresponding $\alpha_j$. We also required this set of indices to have cardinality $2(k-1)$, as this yields an upper bound on the corresponding $\alpha_j$. Together these bounds imply that $\Xi_\Gamma'=\emptyset$. 
\\ To do so, we first create a block of indices that are very degenerate in the sense that there are no nondegenerate directions with two indices in this block, that is the first block. And then, since the singularity is of dimension $k$, there must at least be one submatrix of rank $k$, which is what we create with the second block (it suffices to consider that block and one line of the above block). In this way we have now $k-1$ indices which concentrate all the nondegenerate directions, which means that we have to add $k-1$ more indices which concentrate the nondegenerate directions. This is how we construct the third block, with the constraint that the last line has to be the opposite of the sum of all the other lines as we want to stay in $\Gamma$. 
\\ With this construction, we obtain that the indices on the last two blocks concentrate all the nondegenerate directions.
\\ Indeed if $(i_1, \dots, i_k)$ is a nondegenerate direction, then we have:
\[ i_2 >n-2k+2\]
If this was not the case then $i_1 ,i_2 \in \left\{1, \dots, n-2k-2\right\}$ but then the lines $i_1$ and $i_2$ in the matrix are the same and therefore $(i_1, \dots, i_k)$ is degenerate. 
\\ In particular, a nondegenerate direction has at least $k-1$ indices in
$\left\{n-2k+3, \dots, n \right\}$. 
We will show that for such a subspace $\Gamma '$, the set $\Xi_{\Gamma'}$ is empty. Indeed if we assume that there exists $(\alpha_1,\dots, \alpha_n)\in \Xi_{\Gamma'}$, then we have that there exists for each good direction $(i_1,\dots, i_k)$, a $\theta_{i_1, \dots, i_k } \in [0, 1]$ such that 
\[ \alpha_j=\sum_{\substack{(i_1, \dots, i_k) \text{ good} \\ \exists r  \; ; \; i_r=j}}\theta_{i_1, \dots, i_k} \in (0, \frac{1}{2}),\]
\[ \sum_{(i_1, \dots, i_k) \text{ good}} \theta_{i_1, \dots, i_k}=1.\]
In particular, if we sum over the $\alpha_j$ for $j$ in the last $2(k-1)$ indices, we get: 
\[k-1 =\frac{2(k-1)}{2}>\sum_{j=n-2k-1}^n \alpha_j \geq (k-1)\times \sum_{(i_1, \dots,i_k) \text{ good}} \theta_{i_1,\dots, i_k}=k-1\]
which is absurd and therefore $\Xi_{\Gamma'}=\emptyset$. 
\end{example}
As a conclusion, we have shown that provided that there are enough nondegenerate directions or provided that $\Gamma'$ has a good structure, then $\Xi_{\Gamma'}\neq \emptyset$. This implies that if $m$ is a Mikhlin symbol for such a singularity $\Gamma'$, $T_m$ is bounded at least in the local $L^2$ range. We have also shown that there exists some singularities $\Gamma'$ such that $\Xi_{\Gamma'}=\emptyset$ and for which Theorem \ref{th2} fails to provide any boundedness. 
\subsubsection{Characterization of the nonemptiness of $\Xi_{\Gamma'}$ for $k=2$}
In this subsection, we completely characterize the nonemptiness of $\Xi_{\Gamma'}$ in the case $k=2$, by proving Proposition \ref{caracterisationsolutions_k=2} and exhibing the possible situations in the case $n-s=6$ and $k=2$. 
\\ As before, we assume that there are no \textit{strongly degenerate indices} to simplify the exposition. 
\\ Proposition~\ref{critered} already shows that if
\[ \mathfrak{d}> 2\binom{n-1}{k-1},\]
then $\Xi_{\Gamma'}\neq\emptyset$. We now prove that, for $k=2$, and $n\neq6$, $\mathfrak{d}= 2\binom{n-1}{k-1}$ is the lowest bound for $\Xi_{\Gamma'}\neq \emptyset$.
\\ The case $k=2$ is easier to study than the general case because of the way the degeneracies can be organized. We first introduce the following definition.
\begin{definition}
    We will say that two directions $(i,j)$ and $(i',j')$ intersect if the sets intersect i.e. $\{ i,j\} \cap \{i',j'\} \neq \emptyset $.
\end{definition}
\begin{remark} \label{intersect}
    If two degenerate directions non trivially intersect (ie $\{ i,j\} \neq \{i',j'\}$), the direction formed by the two indices that didn't coincide is also degenerate. This is a simple observation deduced from Remark \ref{remarquedeg} 
\end{remark}
We are now ready to prove \ref{caracterisationsolutions_k=2}
\begin{proof} (\textit{Proposition \ref{caracterisationsolutions_k=2}})
We split the proof into two cases: $\mathfrak{d}\geq 2(n-1)$ and $\mathfrak{d}<2(n-1)$.
\\ \textbf{The case $\displaystyle \mathfrak{d}\geq 2(n-1)$: } We want to show that $\Xi_{\Gamma'} \neq \emptyset$. By Proposition \ref{critered}, we already have that when $\mathfrak{d}>2(n-1)$, $\Xi_{\Gamma'} \neq \emptyset$. Now assume that $\mathfrak{d}=2(n-1)$. In this case, we use a similar argument as in the proof of Proposition \ref{critered}. We would like to set as before each $\theta$ equal to $\displaystyle \frac{1}{\mathfrak{d}}$, but now some $\alpha_j$ might be equal to $\frac{1}{2}$. Nevertheless, a simple trick of redistributing the weights of specific $\theta$s will allow us to conclude. 
 \\ First, there are at most two indices for which this could happen. Indeed suppose that there are at least three indices $j$ for which 
    \[ \sharp \left\{ (i_1,i_2) \text{ good } |\;\exists r,\, \; i_r=j \right\}\geq \frac{\mathfrak{d}}{2}\]
   and assume that $1, 2, 3$ are such indices. This means that $1,2$ and $3$ are \textit{strongly nondegenerate indices} as the assumption above means that each of them belong to exactly $n-1$ nondegenerate directions. This implies that the number of nondegenerate directions is too large. Indeed it is at least 
   \[ 3(n-1)-3.\]
   This is obtained by simply counting the number of directions containing $1$, $2$ or $3$ and removing the ones counted twice,( $(1,2), (1,3) $ and $(2,3)$ ). Therefore we get 
   \[\mathfrak{d} =2 (n-1) \geq 3(n-1)-3,\]
 which implies
   \[ 4 \leq n.\]
   This is a contradiction since $n>2k=4$. This means that there are at most two indices $j$ which satisfy: 
   \[ \sharp \left\{ (i_1,i-2) \text{ good } |\;\exists r,\, \; i_r=j \right\}\geq \frac{\mathfrak{d}}{2} .\]
   In particular, if there is none, we can define as before 
   $ \displaystyle \theta_{(i_1,i_2)}=\frac{1}{\mathfrak{d}}$ for all $(i_1,i_2)$ good directions and we will get that for all $j=1, \dots, n$
   \[ \alpha_j=\sum_{\substack{(i_1,i_2) \text{ good} \\ \exists r\, \; i_r=j}}\theta_{(i_1,i_2)}=\frac{\sharp \left\{ (i_1,i_2) \text{ good } |\;\exists r,\, \; i_r=j \right\} }{\mathfrak{d}} <\frac{\mathfrak{d}}{2\mathfrak{d}}= \frac{1}{2}\]
   which is enough to conclude to $(\alpha_1, \dots, \alpha_n) \in \Xi_{\Gamma'}$.
   \\ Now if there is only one index, assume it is 1, such that 
  \[ \sharp \left\{ (i_1,i_2) \text{ good } |\;\exists r,\, \; i_r=1\right\}\geq \frac{\mathfrak{d}}{2}=n-1.\]
  This again implies that $1$ is a \textit{strongly nondegenerate index} as the maximum number of nondegenerate directions it can belong to is exactly $n-1$. 
  Then by assumption on the number of nondegenerate directions, there exists a good direction $(i_1,i_2)$ with $i_1 \neq 1$ and $i_2 \neq 1$. Define the following:
  \[ \theta_{1,2}=\frac{1}{2\mathfrak{d}}\]
  \[ \theta_{i_1,i_2}=\frac{1}{\mathfrak{d}}+\frac{1}{2\mathfrak{d}}\]
  \[ \theta_{i_1',i_2'}=\frac{1}{\mathfrak{d}}\]
  for all $(i_1',i_2')$ good direction such that $ (i_1',i_2')\neq (i_1,i_2)$ and $(i_1',i_2') \neq (1,2)$. 
  With this choice we get 
  \[ \alpha_1= \frac{1}{\mathfrak{d}} \times (n-1-1)+\frac{1}{2\mathfrak{d}}= \frac{1}{2}-\frac{1}{2\mathfrak{d}}<\frac{1}{2}\]
  and for all other $j=2,\dots, n$
  \[ \alpha_j\leq  \frac{1}{\mathfrak{d}}(n-1-2)+\frac{1}{\mathfrak{d}}+\frac{1}{2\mathfrak{d}}<\frac{1}{2}.\]
  This is enough to prove that $\alpha\in \Xi_{\Gamma'}$.
  \\ Now if there are two indices $j$, assume $1$ and $2$ such that
  \[  \sharp \left\{ (i_1,i_2) \text{ good } |\;\exists r,\, \; i_r=j\right\}\geq \frac{\mathfrak{d}}{2}=n-1\]
  then the idea is essentially the same. One can restrain a bit $\theta_{1,2}$ and put more weight on another $\theta_{i_1,i_2}$ (which can be achieved because of the number of nondegenerate directions is $\displaystyle 2(n-1)$) so that the sum over all $\theta$ is still 1, and all $\alpha$ are strictly lower than $\frac{1}{2}$. 
  \\ This proves that when $\mathfrak{d}=2(n-1)$, $\Xi_{ \Gamma'} \neq \emptyset$.
 \\ \textbf{The case $\mathfrak{d}<2(n-1)$: } In this case, we aim to prove that $\Xi_{\Gamma'}=\emptyset$. It is enough to prove that there exists one or two indices which concentrate all the nondegenerate directions.   
\\ Indeed assume by contradiction that there exist $\alpha \in \Xi_{\Gamma'}$ and $i<j$ such that $(i,j)$ intersects every nondegenerate directions. Then we have that:
\[ \alpha_i + \alpha_j = \left(\sum_{(i_1, i_2) \text{ good}} \theta_{i_1,i_2}\right) + \theta_{i,j}=1+ \theta_{i,j} \geq 1\]
with the notation $\theta_{i,j}=0$ if $(i,j)$ is a bad direction. This  contradicts the fact that $\alpha_i<\frac{1}{2}$ and $\alpha_j< \frac{1}{2}$. 
\\ \begin{itemize}[label=\textbullet]
    \item We first deal with the case $\mathfrak{d}< 2n-4$. Assume by contradiction that there is no pair of indices that concentrate all the nondegenerate directions. This means for all $1\leq i<j\leq n$, there exists $(i',j')$ nondegenerate direction such that $(i,j)$ and $(i',j')$ do not intersect. We show that in this case $\mathfrak{d}\geq 2n-4$.
\\ Fix a nondegenerate direction $(i,j)$ then for all $r=1,\dots, n$, $r \neq i, j$, either $(i,r)$ is a nondegenerate direction or $(j,r)$ is nondegenerate. Indeed if both $(i,r)$ and $(j,r)$ were degenerate, by Remark \ref{intersect} we would have that $(i,j)$ is degenerate. This means that we at least have $n-1$ nondegenerate directions. 
\\ Let $(i',j')$ be a nondegenerate direction which does not intersect $(i,j)$. Then again, for all $r=1,\dots, n$, $r\neq i', j'$, either $(i',r)$ or $(j',r)$ is nondegenerate. From this, we get $n-1$ new nondegenerate directions minus the ones that already appeared in the first case. Since $(i,j)$ and $(i',j')$ don't intersect, the only directions already possibly counted in the first case are (separately) 
\[ \left[ (i, j') \text{ and } (i, i')\right] \: \text{  or  } \: \left[ (j,j')\text{ and } (j,i') \right]\: \text{ or } \:\left[  (j,j') \text{ and } (i,i')\right] \: \text{ or } \: \left[ (i,j') \text{ and } (j,i')\right].\]
We therefore at most counted two nondegenerate directions twice. This means that we obtain at least $n-1+n-1-2=2n-4$ nondegenerate directions which contradicts the assumption $\mathfrak{d}<2n-4$. 
\\ This proves the following statement: if $\Xi_{\Gamma'}\neq \emptyset$, then $\mathfrak{d}\geq 2n-4$. 
\item We now consider the cases $\mathfrak{d}=2n-4$ and $\mathfrak{d}=2n-3$. We again prove that in these cases two indices concentrate all the nondegenerate directions. To do so we  consider our indices $1, \dots, n$ as vertices of a graph $G_{\Gamma'}$. There exists an edge between two vertices $i$ and $j$ in $G_{\Gamma'}$  if the direction $(i,j)$ is degenerate. The nondegenerate directions are therefore the "missing" edges to obtain a complete graph.
\\ 
We call a connected component of $G_{\Gamma'}$, a set of vertices that are connected by a path to each other. $G_{\Gamma'}$ can be split into disjoint connected components and inside each connected component, there exists an edge between any vertices. Indeed if $i$ and $j$ are two vertices in the same connected component there exists a path $(i, l_1)-(l_1, r_2)-\cdots -(l_r,j) $. In particular it means that $(l_{r-1}, l_{r})$ and $(l_r,j)$ are degenerate directions. Since they intersect, by Remark \ref{intersect} $(l_{r-1},j)$ is also degenerate and therefore there exixts an edge between $l_{r-1}$ and $j$. Iterating the same argument, we obtain that there exists an edge between $i$ and $j$. This proves that each connected component is a complete graph. 
\\ Two indices $i,j$ concentrate all nondegenerate directions if the only missing edges in $G_{\Gamma'}$ are those touching either $i$ or $j$. This means that all the other vertices $\{1, \dots, n\} \backslash \{i,j\}$ must belong in the same connected component. We claim that when  $\mathfrak{d}=2n-4$ or $\mathfrak{d}=2n-3$, this is the case. 
\\ Figure \ref{fig:graphe_n=7} is one example of such graph when for example 
\[ \Gamma'_8= \text{Span}\left( \begin{bmatrix}
    1 \\ -1 \\ 1 \\ 1\\ 1 \\1 \\ -4
\end{bmatrix}  \begin{bmatrix}
    1 \\ -1 \\ 2  \\ 2\\ 2  \\ 2 \\-8
\end{bmatrix}\right)\]
where $n=7$, $k=2$ and  $\mathfrak{d}=2n-3=11$. In this case, we can see that the two vertices $1$ and $2$ concentrate all the nondegenerate directions, so $\Xi_{\Gamma'_8}=\emptyset$.
\begin{figure}[h]
    \centering
\begin{tikzpicture} 
    \node[draw, circle] (5) at ( 2.0 ,  0.0 ) {5}; 
    \node[draw, circle] (4) at ( 1.24,  1.56) {4};
    \node[draw, circle] (3) at (-0.44,  1.94) {3};
    \node[draw, circle] (2) at (-1.80,  0.86){2};
    \node[draw, circle] (1) at (-1.80, -0.86) {1};
    \node[draw, circle] (7) at (-0.44, -1.94) {7};
    \node[draw, circle] (6) at ( 1.24, -1.56){6};
\draw (3)--(4)--(5)--(6)--(7)--(3);
\draw (3)--(5)--(7)--(4)--(6)--(3);
\draw (1)--(2);
\end{tikzpicture}
    \caption{An example: $G_{\Gamma'_8}$}
    \label{fig:graphe_n=7}
\end{figure}
\\ Assume that there are $r$ disjoint connected components $C_1,\dots, C_r$ in $G_{\Gamma'}$ and denote by $s_j$ the number of vertices in the component $C_j$. We have $ \sum_{j=1}^r s_j=n$. Since each pair of vertices in one connected component shares an edge, the number of edges in $G_{\Gamma'}$ is simply 
\begin{equation} \label{countedge1}
    \sum_{j=1}^r\frac{s_j(s_j-1)}{2}.
\end{equation}
On another hand, the number of edges is also the number of degenerate directions given by 
\begin{equation} \label{countedge2}
    \binom{n}{2}-\mathfrak{d}=\frac{1}{2}(n^2-n-2\mathfrak{d}).
\end{equation}
We first prove the statement for $\mathfrak{d}=2n-4$ and then repeat the same process for $\mathfrak{d}=2n-3$. 
\\  Using the fact that $\displaystyle n=\sum_{j=1}^r s_j$, we combine \eqref{countedge1} and \eqref{countedge2} to get:  
\[ \sum_{j=1}^rs_j(s_j-1) = n^2-n-2\mathfrak{d}= n^2-5n+8=\left( \sum_{j=1}^rs_j\right)^2-5\sum_{j=1}^r s_j  \: +  8.\]
Developing 
\[\displaystyle \left(\sum_{j=1}^r s_j \right)^2=\sum_{j=1}^r \left(s_j^2 \:+\: \sum_{i\neq j} s_is_j\right)= \sum_{j=1}^r s_j^2 \:+ \: \sum_{j=1}^r s_j \sum_{i\neq j}s_i= \sum_{j=1}^r s_j^2 \:+\: \sum_{j=1}^r s_j (n-s_j),\]
we get 
\begin{equation}\label{somme_sommets}
\sum_{j=1}^r s_j( 4-n+s_j) =8.\end{equation} 
Notice that the terms in the summand are nonnegative if and only if 
\[ s_j> n-4\]
which leads to consider only very few cases of possible connected components. 
\\ If there are no connected components with $s_j> n-4$, then the right hand side of \eqref{somme_sommets} is negative or vanishes and therefore cannot be equal to 8.
\\ If there is one connected component, assume $C_1$, such that $s_1=n-3$ then the other connected components contain at most $3$ vertices. Thus the possibilities are $s_2=3$, or $s_2=2$ and $s_3=1$ or $s_2=s_3=s_4=1$.
\\ In the first case, there is only one other connected component $C_2$, with $s_2=3$,  \eqref{somme_sommets} becomes 
\[n=5.\]
This implies $s_1=2$ and $s_3=3$ which means that the two vertices in $C_1$ concentrate all the nondegenerate directions.
\\ In the second case, there are now two other connected components $C_2$ and $C_3$ with respectively two and one vertices. Thus, \eqref{somme_sommets} becomes 
\[ n=3\]
which is absurd since $n>2k=4$. 
\\ Finally in the third case, there are three other connected components, each of them composed of only one vertex. In this case \eqref{somme_sommets}) becomes 
\[ n=2\]
which is a contradiction for the same reasons. 
\\ This proves that when there is a connected component with exactly $n-3$ vertices, two indices concentrates the nondegenerate directions and therefore $\Xi_{\Gamma'}=\emptyset$. 
\\ Now if there is a connected component, say $C_1$ such that $s_1=n-2$ or $s_1=n-1$, in particular the one or two vertices left out of $C_1$ concentrate all the nondegenerate directions. This again implies that $\Xi_{\Gamma'}=\emptyset$. One can also do the same computations as above and get that when $\mathfrak{d}=2n-4$, the only graph configuration possible is in fact $s_1=n-2$ and $s_2=2$. 
\\ Assume now that $\mathfrak{d}=2n-3$. A similar reasoning leads us to 
\begin{equation}\label{somme_sommet_2}
    \sum_{j=1}^r s_j( 4-n+s_j) =6.
\end{equation}
Now again, the only achievable graph configurations are when there exists a connected component, assume $C_1$, such that $s_1>n-4$. 
\\ If $s_1=n-3$, then again the other possibilities are $s_2=3$, or $s_2=2$ and $s_3=1$ or $s_2=s_3=s_4=1$. 
\\ In the first case, \eqref{somme_sommet_2} becomes
\[n=6\]
which is a contradiction since we assumed $n\neq 6$. We will explain the configuration that appears in this situation later. 
\\ In the second case, \eqref{somme_sommet_2} becomes \[ n=4\]
which is absurd since $n>2k=4$.
\\ Finally, in the third case \eqref{somme_sommet_2} becomes 
\[n=3\]
which is again a contradiction. 
\\ Once again this implies that one or two vertices concentrate all the nondegenerate directions. In this case, the actual only possible configuration is $s_1=n-2$ and $s_2=s_3=1$. This proves that when $\mathfrak{d}=2n-3$, $\Xi_{\Gamma'}=\emptyset$.
\end{itemize}
In the end we proved that for $\Xi_{\Gamma'}\neq \emptyset$, we need $\mathfrak{d}\geq 2 \binom{n-1}{k-1}$. Combined with Proposition \ref{critered} and the first part of the proof, we get the necessary and sufficient result.
\end{proof}
\begin{remark} \label{remarktriangle}
We would like to discuss what happens in the case $n=6$. 
\\Proposition \ref{critered} states that when $\mathfrak{d}>2(n-1)$, $\Xi_{\Gamma'}\neq \emptyset$. The same proof as above shows that when $\mathfrak{d}=2(n-1)$, $\Xi_{\Gamma'}\neq \emptyset$ and that when $\mathfrak{d}\leq 2n-4$, $\Xi_{\Gamma'}=\emptyset$. The only difference is therefore when $\mathfrak{d}=2n-3=9$. In this particular case two graph configurations are possible. Indeed our proof suggests that we can find some $\Gamma'$ with degeneracies arranged as below either in two triangles ($s_1=3$ and $s_2=3$) or in one component of 4 vertices and two other single vertex components ($s_1=4$, $s_2=1$ and $s_3=1$). 
\begin{figure}[h]
    \centering
\begin{tikzpicture} 
    \node[draw, circle] (5) at ( 2.0 ,  0.0 ) {5}; 
    \node[draw, circle] (4) at ( 1.0 ,  1.73) {4};
    \node[draw, circle] (3) at  (-1.0 ,  1.73) {3};
    \node[draw, circle] (2) at (-2.0 ,  0.0 ){2};
    \node[draw, circle] (1) at (-1.0 , -1.73) {1};
    \node[draw, circle] (6) at ( 1.0 , -1.73) {6};
\draw (1)--(2)--(3)--(1);
\draw (4)--(5)--(6)--(4);
\end{tikzpicture}
    \caption{Triangles arrangement n=6}
    \label{fig:graphe_n=6_castriangle}
\end{figure}
\begin{figure}[h]
    \centering
\begin{tikzpicture} 
    \node[draw, circle] (5) at ( 2.0 ,  0.0 ) {5}; 
    \node[draw, circle] (4) at ( 1.0 ,  1.73) {4};
    \node[draw, circle] (3) at  (-1.0 ,  1.73) {3};
    \node[draw, circle] (2) at (-2.0 ,  0.0 ){2};
    \node[draw, circle] (1) at (-1.0 , -1.73) {1};
    \node[draw, circle] (6) at ( 1.0 , -1.73) {6};
\draw (3)--(4)--(5)--(6)--(3)--(5);
\draw (6)--(4);
\end{tikzpicture}
    \caption{Arrangement four sided polygon n=6}
    \label{fig:graphe_n=6_casmarchepas}
\end{figure}
In the case of Figure \ref{fig:graphe_n=6_castriangle}, the set 
$\Xi_{\Gamma '}$ is not empty as each vertex has 3 missing edges and therefore we can choose $\theta_{i,j}=\frac{1}{9}$ which means that each $\alpha_j=\frac{1}{3}<\frac{1}{2}$. 
An example of a singularity for which this is the case is the example $\Gamma_2'$ from the introduction: 
\[ \Gamma'_2= \text{Span}\left(\begin{bmatrix}
   1 \\ 1 \\ -2 \\ 0 \\ 0 \\0 
\end{bmatrix}, \begin{bmatrix}
    0 \\0\\0\\1 \\1\\-2
\end{bmatrix}\right).\]
The boundedness range is computed explicitly in Example \ref{ex2}.
\\On the contrary in the case of Figure \ref{fig:graphe_n=6_casmarchepas}, the vertices $1$ and $2$ concentrate the nondegenerate directions and therefore, the set $\Xi_{\Gamma'}$ is empty. An example of such a singularity $\Gamma '$ is the following 
\[ \Gamma'_4= \text{Span}\left(\begin{bmatrix}
    1 \\-1 \\0 \\0\\0 \\0
\end{bmatrix}, \begin{bmatrix}
    1 \\ 1\\1  \\  1\\ 1\\-5
\end{bmatrix}\right).\]
\end{remark}
This remark and Proposition \ref{caracterisationsolutions_k=2} completely characterize when $\Xi_{\Gamma'}\neq \emptyset$ in the case $k=2$.
\subsubsection{Range of $L^p$ exponents for two examples in the case $k=2$}
In the previous subsection, we were interested in finding some criteria for which the set $\Xi_{\Gamma'}\neq \emptyset$ as this directly gives the local $L^2$ boundedness of the operator $T_m$. We are now focusing on trying to describe the whole range of $L^p$ exponents we obtain. In this subsection, we study two examples for which we can have an explicit range of exponents. 
\\ We will compare the range obtained to the range in the nondegenerate case. To do so we introduce the set $\Xi_{n,k}$ corresponding to $\Xi_{\Gamma'}$ when $\Gamma'$ is nondegenerate. 
\[
\Xi_{n,k}:= \left\{(\alpha_{1},\dots, \alpha_n) \in (0,\frac{1}{2})^{n} \:| \: \exists(\theta_{i_1,\dots, i_k})\in [0,1]^{\binom{n}{k}}\: s.t. \:\: \alpha_j= \sum_{\substack{(i_1, \dots, i_k)  \\ \exists r \text{ s.t. } i_r=j}} \theta_{i_1,\dots, i_k},, 
\sum_{(i_1, \dots, i_k)} \theta_{i_1, \dots i_k}=1 \right\}. \]
\begin{example} \label{ex1}
    First consider the set $\Gamma'_9 \subset\mathbb{R}^5$ defined by: 
    \[\Gamma'_9= \text{Span} \left( \begin{bmatrix}
        1 \\
        1  \\
        1  \\
        1\\
        -4 
    \end{bmatrix}, \begin{bmatrix} 0 \\ 0 \\ 1 \\ 2 \\ -3\end{bmatrix}\right).\]
   Here $n=5$ and $k=2$. In this case, there is only one degenerate direction, which is $(1,2)$ and $\mathfrak{d}=9$.
      \begin{proposition}\label{range1} Let $m$ be a Mikhlin symbol satisfying \eqref{Mikhlin}, for $\Gamma'_9$.
     \\Then,  $T_m$ extends to a bounded operator from $L^{p_1}\times \cdots \times L^{p_{4}} \rightarrow L^{p_5'}$ for all $p_1, \dots, p_5$  such that for all $j=1, \dots, 4$, 
     \[p_j> 1,\]
     and,
    \[ \frac{1}{p_1}+\cdots +\frac{1}{p_5}=1,\]
    \begin{equation} \label{calculXi_1}\frac{1}{p_{i_1}}+\cdots +\frac{1}{p_{i_r}}<\frac{1+r}{2}  \end{equation}
    for all $1\leq i_1<\cdots <i_r\leq 5$ and $r=1, \dots, 5$.
    \end{proposition}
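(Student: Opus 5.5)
The plan is to deduce Proposition \ref{range1} from Theorem \ref{th2}, so everything reduces to identifying $\Xi_{\Gamma'_9}$. Here $s=0$ (no index is strongly degenerate, since every coordinate of $\Gamma'_9$ is nonzero on some vector), $n-s=5>4=2k$, and the only degenerate direction is $(1,2)$. Theorem \ref{th2} then gives boundedness whenever \eqref{eq:range}, \eqref{eq:Holder} hold and there is $\alpha\in\Xi_{\Gamma'_9}$ with $\frac1{p_j}<1-\alpha_j$ for all $j$. So I will show that the set of tuples $(1/p_1,\dots,1/p_5)$ satisfying $\sum 1/p_j=1$, $p_j>1$ for $j\le 4$, and \eqref{calculXi_1} is contained in $\{x : \exists\,\alpha\in\Xi_{\Gamma'_9},\ x_j<1-\alpha_j\ \forall j\}$.

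First, describe $\Xi_{\Gamma'_9}$. For a general weight vector $\theta$ on pairs, $\alpha_j$ is the weighted degree of $j$, so $\sum_j\alpha_j=2$. The nondegenerate set $\Xi_{5,2}$ is, by a Lemma \ref{lemmaconv}-type argument (Muscalu--Tao--Thiele), exactly the set of $\alpha\in(0,\frac12)^5$ with $\sum\alpha_j=2$; here the constraints $\alpha_j\le\sum_{i\neq j}\alpha_i$ are automatic once $\alpha_j<\frac12$. Removing the edge $(1,2)$ adds the single constraint $\alpha_1+\alpha_2\le\sum_{(i,j)\ne(1,2)}\theta_{ij}$, i.e. $\alpha_1+\alpha_2\le 1$, and this is automatic since each of $\alpha_1,\alpha_2$ is $<\frac12$. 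I will check this directly: given $\alpha\in(0,\frac12)^5$ with $\sum\alpha_j=2$, I want a fractional "$b$-matching" on $K_5\setminus\{12\}$. I would construct it explicitly by routing $\alpha_1$ and $\alpha_2$ through edges to $\{3,4,5\}$ (possible since $\alpha_3+\alpha_4+\alpha_5=2-\alpha_1-\alpha_2>1>\alpha_1+\alpha_2$), and then completing the residual degrees on the triangle $\{3,4,5\}$, which requires that each residual does not exceed the sum of the other two. I would prove this by choosing the split of $\alpha_1,\alpha_2$ among $3,4,5$ proportionally to suitable amounts. Alternatively, I would appeal to the standard degree-sequence criterion for fractional edge weightings (Hall-type: for every $S$, $\sum_{S}\alpha\le$ total weight of edges meeting $S$). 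This step is the main technical obstacle, and I would first try the explicit proportional construction, falling back on LP duality.

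Second, once $\Xi_{\Gamma'_9}=\Xi_{5,2}$ is established, the admissible region coincides with the nondegenerate one for $n=5$, $k=2$. It remains to show that $x=(1/p_j)$ satisfying \eqref{calculXi_1} admits such an $\alpha$. Set $\beta_j=1-\alpha_j\in(\frac12,1)$ with $\sum\beta_j=3$. We need $x_j<\beta_j$, with $\sum x_j=1$. If all $x_j<1$, choose $\beta_j=\max(x_j,\frac12)+\delta_j$ and distribute the remaining mass so that $\sum\beta_j=3$. This is possible because $\sum_j\max(x_j,\frac12)<3$: the sum is $r$-terms of $x$ over those $>\frac12$ plus $(5-r)/2$, and \eqref{calculXi_1} gives the $x$ part $<(1+r)/2$, so the total is $<3$. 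The upper limit $\beta_j<1$ leaves room since $5>3$. If $x_5\ge1$ is allowed (the bad index $p_5'<1$), the same count works, with $x_5<1$ implicitly required since $\alpha_5>0$. If necessary, I would handle $x_5$ close to $1$ via the condition with $r=1$.

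Finally, with such $\alpha$ in hand, Theorem \ref{th2} (through Theorems \ref{thbanach2ndmethode} and \ref{thquasibanachsecondemethode}) yields the bound, and the conditions $p_j>1$ for $j\le4$ come from \eqref{eq:range}.
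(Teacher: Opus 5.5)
Your proposal is correct, but it reaches the key fact by a different route from the paper's.

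The final step is the same in both. The paper also sets $\tilde\alpha_j=\max(\frac12,\frac1{p_j})$, enlarges to $\tilde\alpha\in(\frac12,1)^5$ with $\sum_j\tilde\alpha_j=3$, and puts $\alpha=1-\tilde\alpha$.

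The difference is how you show $\alpha\in\Xi_{\Gamma'_9}$. The paper never describes $\Xi_{\Gamma'_9}$ explicitly. It uses three ingredients:
\begin{itemize}
\item convexity of $\Xi_{\Gamma'_9}$;
\item explicit choices of $\theta$ showing that the five permutations of $(0,\frac12,\frac12,\frac12,\frac12)$ lie in $\overline{\Xi_{\Gamma'_9}}$;
\item Lemma \ref{lemmaconv}, to identify $\{\tilde\alpha\in(\frac12,1)^5:\sum_j\tilde\alpha_j=3\}$ with the relative interior of the convex hull of the permutations of $(1,\frac12,\frac12,\frac12,\frac12)$.
\end{itemize}
You instead prove outright that $\Xi_{\Gamma'_9}=\{\alpha\in(0,\frac12)^5:\sum_j\alpha_j=2\}$. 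After that, membership of $1-\tilde\alpha$ is immediate, with no closure or convex-hull argument.

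The step you flag as the main obstacle is actually immediate with your proportional split. Write $a=\alpha_1+\alpha_2<1$.
\begin{itemize}
\item Set $\theta_{ij}=\alpha_i\alpha_j/(2-a)$ for $i\in\{1,2\}$, $j\in\{3,4,5\}$. This routes exactly $\alpha_1$ and $\alpha_2$.
\item The residual degrees are $r_j=\alpha_j\frac{2-2a}{2-a}$ for $j=3,4,5$.
\item The triangle condition $r_j\le r_{j'}+r_{j''}$ reduces to $\alpha_j\le\alpha_{j'}+\alpha_{j''}=2-a-\alpha_j$, that is $\alpha_j\le 1-\frac a2$. This holds because $\alpha_j<\frac12<1-\frac a2$.
\end{itemize}

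Your route gives a sharper, fully explicit description of $\Xi_{\Gamma'_9}$, and it makes the equality $\Xi_{\Gamma'_9}=\Xi_{5,2}$ transparent. The Hall-type criterion for fractional edge weightings on the graph of nondegenerate directions would also handle Example \ref{ex2} directly: there the graph is $K_{3,3}$, forcing $\alpha_1+\alpha_2+\alpha_3=\alpha_4+\alpha_5+\alpha_6=1$. The paper's extreme-point method is less explicit, but it is the tool it reuses in that example to compare with $\Xi_{6,2}$.

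One correction to your last step. The quasi-Banach case $p_5'<1$ corresponds to $\frac1{p_5}<0$, not to $\frac1{p_5}\ge 1$. A negative $x_5$ is harmless, since then $\max(x_5,\frac12)=\frac12$. The bound $x_5<1$ is exactly \eqref{calculXi_1} with $r=1$, so no separate case analysis is needed.
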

     Notice that this is the same boundedness range as in the nondegenerate case in \cite{MuscaluTaoThiele2002}.
    \\ \begin{proof}
    Proposition \ref{caracterisationsolutions_k=2} and the fact that $\mathfrak{d}=9 \geq 8=2\binom{5-1}{2-1}$ imply that $\Xi_{\Gamma'_9} \neq \emptyset$. Theorem \ref{th2} states that the range in this case are the exponents $p_1, \dots, p_n$ such that 
    \[ \frac{1}{p_1}+\cdots +\frac{1}{p_n}=1,\]
    there exists $\alpha=(\alpha_1, \dots, \alpha_n) \in \Xi_{\Gamma'_9}$ such that for all $j=1,\dots, n$
    \begin{equation} \label{eq:alpha}
     1- \alpha_j>\frac{1}{p_j}.\end{equation}
   We now need to show that this is equivalent to \eqref{calculXi_1}. 
   \\ First, using that $\displaystyle \sum_{j=1}^n\alpha_j=k=2$ and that each $\alpha_j<\frac{1}{2}$, it is easy to see that any Hölder tuple $(p_1, \dots, p_n)$ satisfying \eqref{eq:alpha} satisfies \eqref{calculXi_1}.
   \\ Now to prove the converse, given a Hölder tuple $(p_1, \dots,p_5)$ such that $p_j>1$ for all $j=1, \dots, 4$ and satisfying \eqref{calculXi_1}, we want to show that there exists an $\alpha \in \Xi_{\Gamma'_9}$ such that the tuple satisfies \eqref{eq:alpha} for all $j=1, \dots, 5$. For any $\alpha \in \mathbb{R}^5$ we denote $\tilde{\alpha}:=(1-\alpha_1, \dots, 1-\alpha_5)$. Since $\Xi_{\Gamma_8'}$ is convex and the map $\alpha \mapsto \tilde{\alpha}$ is affine, it is enough to show that the point $(0,\frac{1}{2},\frac{1}{2},\frac{1}{2}, \frac{1}{2})$ and permutations of its coordinates are boundary points of $\Xi_{\Gamma'_9}$. Indeed, assume this is the case, and define for $j=1, \dots, 5$
\[\tilde{\alpha}_j=\max(\frac{1}{2},\frac{1}{p_j}).\]
   Then using \eqref{calculXi_1}, we get 
   \[ \sum_{j=1}^5 \tilde{\alpha}_j<3\]
Therefore, we can enlarge each $\tilde{\alpha}_j$ such that $\tilde{\alpha}_j \in (\frac{1}{2},1)$ with $\tilde{\alpha}_j> \frac{1}{p_j}$ and $\displaystyle \sum_{j=1}^5\tilde{\alpha}_j=3$. Notice that such $\tilde{\alpha}$ will satisfy for any $1\leq i_1 <\cdots< i_r\leq 5$ and any $r=1,\dots,5$:
\[ \tilde{\alpha}_{i_1}+ \cdots +\tilde{\alpha}_{i_r}<\frac{1+r}{2}\]
and $\displaystyle \sum_{j=1}^5 \tilde{\alpha}_j=3$. From Lemma \ref{lemmaconv}, this means that $\tilde{\alpha}$ is in the interior of the convex hull of the point $(1, \frac{1}{2}, \frac{1}{2}, \frac{1}{2}, \frac{1}{2})$. In particular $\alpha:=(1-\tilde{\alpha}_1, \dots, 1-\tilde{\alpha}_5)$ is in the interior of the convex hull of the point $(0,\frac{1}{2}, \frac{1}{2}, \frac{1}{2}, \frac{1}{2})$ and its permutations. Since $\Xi_{\Gamma'_9}$ is convex and we assumed that these points are on its boundary, it shows that $\alpha \in \Xi_{\Gamma'_9}$. 
\\ We are now only left with proving that $(0,\frac{1}{2}, \frac{1}{2}, \frac{1}{2}, \frac{1}{2})$ and its permutations are on the boundary of $\Xi_{\Gamma'_9}$. For an $\alpha \in \Xi_{\Gamma'_9}$, we have: 
\[\left\{\begin{aligned}\alpha_1&= \theta_{1,3}+\theta_{1,4}+\theta_{1,5} ,\\
\alpha_2&= \theta_{2,3}+\theta_{2,4}+\theta_{2,5}, \\
\alpha_3&= \theta_{1,3}+\theta_{2,3}+\theta_{3,4} +\theta_{3,5}, \\
\alpha_4&= \theta_{1,4}+\theta_{2,4}+\theta_{3,4} +\theta_{4,5}, \\
\alpha_5&= \theta_{1,5}+\theta_{2,5}+\theta_{3,5} +\theta_{4,5}, \\       
    \end{aligned}\right.\]
    where $\theta_{i,j}\in [0,1]$ such that $\displaystyle \sum_{\substack{1\leq i<j\leq n \\ (i,j) \neq (1,2)}}\theta_{i,j}=1$. Compared with the nondegenerate case, the system above differs only by the absence of the parameter $\theta_{1,2}$. This does not affect the resulting admissible region, since $\Xi_{5,2}$ and $\Xi_{\Gamma'_9}$ have the same extreme points, namely the permutations of $(\frac{1}{2}, \frac{1}{2}, \frac{1}{2}, \frac{1}{2}, 0)$. Indeed for any $\varepsilon>0$ small enough, one can pick 
    \[ \theta_{1,5}=\theta_{2,5}=\theta_{3,5}=\theta_{4,5}=\varepsilon,\]
    \[ \theta_{1,3}=\theta_{1,4}=\theta_{2,3}=\theta_{2,4}=\frac{1}{4}-\varepsilon,\]
    \[\theta_{3,4}=0\]
    such that 
    \[\alpha_1=\alpha_2=\alpha_3=\alpha_4=\frac{1}{2}-\varepsilon,\]
    and 
    \[\alpha_5=5\varepsilon.\]
    One can do a similar choice of $\theta_{i,j}$ for $(\frac{1}{2},\frac{1}{2},\frac{1}{2},0,\frac{1}{2})$ and $(\frac{1}{2},\frac{1}{2},0,\frac{1}{2},\frac{1}{2})$. Now for $(0,\frac{1}{2},\frac{1}{2},\frac{1}{2},\frac{1}{2})$, one can pick for any $\varepsilon>0$ small enough 
    \[ \theta_{1,3}=\theta_{1,4}=\theta_{1,5}=\varepsilon,\]
    \[ \theta_{2,3}=\theta_{2,4}=\theta_{2,5}=\theta_{3,4}=\theta_{3,5}=\theta_{4,5}=\frac{1}{6}-\frac{1}{2}\varepsilon,\]
    so that 
    \[ \alpha_1=3\varepsilon,\]
    \[\alpha_2=\frac{1}{2}-\frac{3}{2}\varepsilon,\]
    \[\alpha_3=\alpha_4=\alpha_5=\frac{1}{2}-\frac{1}{2}\varepsilon.\]
This concludes the proof. 
\end{proof}
\end{example}
In the first example, we notice that we find the exact same range of $L^p$ exponents as in the nondegenerate case. This is deduced from the fact that the boundary of each $\Xi$ contains the boundary points with as many $\alpha_j=0$ as possible. In some sense, the degeneracy of the first example is "small". In the second example we will however see that it is not always the case as the range of $L^p$ exponents we will exhibit is strictly included in the range for the nondegenerate case. 
\begin{example} \label{ex2}
    We now consider $\Gamma_2'$: 
    \[ \Gamma'_2= \text{Span}\left(\begin{bmatrix}
    1 \\1 \\-2 \\0 \\0 \\0
\end{bmatrix}, \begin{bmatrix}
    0 \\ 0 \\ 0 \\ 1 \\ 1 \\-2
\end{bmatrix}\right), \]
which is our example from the introduction and of the case $n=6$, with the number of nondegenerate directions $\mathfrak{d}=2n-3=9$ that corresponds to the graph \ref{fig:graphe_n=6_castriangle}. 

From this we deduce as we did in the first example from Theorems \ref{thbanach2ndmethode} and \ref{thquasibanachsecondemethode}
\begin{proposition}
Let $m$ be a Mikhlin symbol satisfying \eqref{Mikhlin}, for $\Gamma_2'$. Then the operator $T_m$ extends to a bounded operator
\[T_m: L^{p_1} \times \cdots \times L^{p_5} \rightarrow L^{p_6'}\]
whenever
\[ \frac{1}{p_1}+\cdots+ \frac{1}{p_6}=1,\]
\[ p_j>1 \text{ for }j=1,\dots,5,\]
\begin{equation} \label{calculXi_2}
\frac{1}{p_{i_1}}+\frac{1}{p_{i_2}}<\frac{3}{2} \text{ for all } 1\leq i_1< i_2\leq 3 \text{ and } 4\leq i_1< i_2\leq 6, \end{equation}
\begin{equation} \label{calculXi_3}
    \frac{1}{p_1}+\frac{1}{p_2}+\frac{1}{p_3}<2, \text{ and } \frac{1}{p_4}+\frac{1}{p_5}+\frac{1}{p_6}<2.\end{equation}
\end{proposition}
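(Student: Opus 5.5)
The plan is to argue exactly as in the proof of Proposition \ref{range1}. Theorem \ref{th2} (through Theorems \ref{thbanach2ndmethode} and \ref{thquasibanachsecondemethode}) gives boundedness for every Hölder tuple with $p_j>1$, $j\le 5$, for which there is some $\alpha\in\Xi_{\Gamma'_2}$ with $1-\alpha_j>\frac{1}{p_j}$ for all $j$. Here $s=0$, since no coordinate vanishes identically on $\Gamma'_2$. So the task is to show that every tuple satisfying \eqref{calculXi_2}, \eqref{calculXi_3} admits such an $\alpha$. In the variables $\tilde\alpha_j=1-\alpha_j$ this means describing $\widetilde{\Xi}:=\{\tilde\alpha\mid \alpha\in\Xi_{\Gamma'_2}\}$ explicitly.

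First I would write down the parametrization. The bad directions are the two triangles $\{1,2,3\}$ and $\{4,5,6\}$, and the nine good directions are the pairs $(i,j)$ with $i\in\{1,2,3\}$, $j\in\{4,5,6\}$. Hence $\alpha_j=\sum_{i}\theta_{ij}$ is a row or column sum of a $3\times 3$ nonnegative matrix of total mass $1$. Two consequences follow. First, $\alpha_1+\alpha_2+\alpha_3=\alpha_4+\alpha_5+\alpha_6=1$. Second, by a Birkhoff/transportation argument, any nonnegative row vector and column vector with common sum $1$ are realized by some $\theta$ (for instance the product $\theta_{ij}=\alpha_i\alpha_j$). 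Therefore
\[
\Xi_{\Gamma'_2}=\Big\{\alpha\in(0,\tfrac12)^6 \;\Big|\; \alpha_1+\alpha_2+\alpha_3=1,\ \alpha_4+\alpha_5+\alpha_6=1\Big\},
\]
which is a product of two open triangles. In the tilde variables this reads $\tilde\alpha_j\in(\frac12,1)$ with $\tilde\alpha_1+\tilde\alpha_2+\tilde\alpha_3=2=\tilde\alpha_4+\tilde\alpha_5+\tilde\alpha_6$.

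Next I would check necessity, which is immediate. If $\frac1{p_j}<\tilde\alpha_j$ for such an $\alpha$, then for each pair inside one block the sum is $<2-\frac12=\frac32$, which gives \eqref{calculXi_2}, and each block sum is $<2$, which gives \eqref{calculXi_3}.

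For the converse I would work blockwise, since the constraints decouple. Fix the block $\{1,2,3\}$ and set $x_j=\max(\frac12,\frac1{p_j})$. If no $x_j$ equals $\frac1{p_j}>\frac12$, then $x_1+x_2+x_3=\frac32<2$. If exactly one does, the sum is $\frac1{p_j}+1<2$, using $\frac1{p_j}<1$; this needs $p_j>1$ for that index, and for index $6$ it follows from \eqref{calculXi_2}, since $\frac1{p_6}<\frac32-\frac1{p_4}$ and so on. The cases of two or three active indices follow from \eqref{calculXi_2} and \eqref{calculXi_3} respectively. So in every case $\sum x_j<2$. Each $x_j$ is also $<1$ (via $p_j>1$, or via \eqref{calculXi_2} for $j=6$, noting $\frac1{p_i}\ge 0$ only gives $\frac1{p_6}<\frac32$). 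This last point is the delicate step, and it is the main obstacle I expect. For $j=6$, note that $p_6$ may be below $1$ in the quasi-Banach case, so $\frac1{p_6}$ can exceed $1$. If I cannot deduce $\frac1{p_6}<1$ from the hypotheses, I would fall back on the fact that Theorem \ref{thquasibanachsecondemethode} only requires $p_j(1-\alpha_j)>1$ for $j\le n-1$. In that case the block $\{4,5,6\}$ only needs $\tilde\alpha_4>\frac1{p_4}$ and $\tilde\alpha_5>\frac1{p_5}$, and it is solvable precisely when $\max(\frac12,\frac1{p_4})+\max(\frac12,\frac1{p_5})<\frac32$, which is \eqref{calculXi_2} together with $p_4,p_5>1$.

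Otherwise, the slack $2-\sum_j x_j>0$ can be distributed, enlarging each $x_j$ strictly while staying below $1$. This yields $\tilde\alpha_j\in(\frac12,1)$ with $\tilde\alpha_j>\frac1{p_j}$ and block sum exactly $2$. Doing the same for $\{4,5,6\}$ and setting $\alpha=1-\tilde\alpha\in\Xi_{\Gamma'_2}$ completes the verification, and Theorem \ref{th2} gives the boundedness.
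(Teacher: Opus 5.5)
Your proposal is correct and reaches the result by a different, more direct route than the paper, with one step you flagged as delicate that needs a one-line fix.

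\textbf{Comparison of routes.} The paper never identifies $\Xi_{\Gamma'_2}$ exactly. It determines which extreme points of $\bar{\Xi}_{6,2}$ survive in $\bar{\Xi}_{\Gamma'_2}$: the permutations of $(0,\frac12,\frac12,\frac12,\frac12,0)$ with exactly one zero in each block survive, while those with two zeros in one block are lost. It then shows, by an extreme-point argument, that the convex hull of the corresponding points $\tilde\alpha$ is the set cut out by the pairwise bounds $\le\frac32$ and the two block sums $=2$. Finally it concludes ``in the same way as Proposition \ref{range1}''.

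You instead observe that the nine good directions are exactly the pairs joining $\{1,2,3\}$ to $\{4,5,6\}$. So $\alpha$ is the pair of marginals of a nonnegative $3\times 3$ matrix of total mass $1$. The product coupling $\theta_{ij}=\alpha_i\alpha_j$ then shows that $\Xi_{\Gamma'_2}$ is exactly a product of two open triangles, and the range follows from an explicit blockwise construction.

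What each route buys:
\begin{itemize}
\item Your route is shorter and avoids the closure and convex-hull bookkeeping. It also makes explicit the two separate normalizations (block sums equal to $2$) that the paper's appeal to Proposition \ref{range1} passes over, since there the only constraint is a single total sum.
\item The paper's extreme-point method is the template one would need for singularities lacking this bipartite structure. It also directly exhibits which extreme points are lost compared with the nondegenerate case.
\end{itemize}

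\textbf{The step you flagged, $\frac{1}{p_6}<1$, is misdiagnosed.} In the quasi-Banach regime $p_6'<1$ one has $\frac{1}{p_6}=1-\frac{1}{p_6'}<0$. So $\frac{1}{p_6}$ never exceeds $1$ there, and your fallback to Theorem \ref{thquasibanachsecondemethode} addresses a case that does not occur.

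Relatedly, in your ``exactly one active index'' case with index $6$ active, \eqref{calculXi_2} does not give $\frac{1}{p_6}<1$. There $\frac{1}{p_4},\frac{1}{p_5}\le\frac12$, so \eqref{calculXi_2} only yields $\frac{1}{p_6}<\frac32-\frac{1}{p_4}$, which may be $\ge 1$.

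The correct justification is the H\"older relation: $\frac{1}{p_6}=1-\sum_{j\le 5}\frac{1}{p_j}\le 1$, with equality only when $p_1=\cdots=p_5=\infty$. That endpoint lies in the Banach regime ($p_6'=\infty$), so the fallback would not apply to it anyway. It is not delivered by Theorem \ref{th2} for any $\alpha$, since it would require $\alpha_6<0$. The paper's argument does not cover it either, since its construction needs $\max(\frac12,\frac{1}{p_6})<1$.

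So the statement should be read with at least one $p_j<\infty$, equivalently with $\frac{1}{p_6}<1$ added. With that fix, every $x_j<1$, the slack $2-\sum_j x_j$ in each block can be distributed, and your proof is complete.
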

One can see that in particular, the result does not include the boundedness
\[ L^{1+\varepsilon} \times L^{1+\varepsilon} \times L^2 \times L^2\times L^2\longrightarrow L^{\frac{2(1+\varepsilon)}{7+3\varepsilon}}\]
for any $\varepsilon>0$ small enough, which holds in the nondegenerate case. This is explained by some missing boundary points in $\Xi_{\Gamma'_2}$ in comparison to $\Xi_{6,2}$.
\begin{proof}
From Remark \ref{remarktriangle}, we already know that $\Xi_{\Gamma'_2} \neq \emptyset$.
Theorem \ref{th2} states that $T_m$ is bounded from $L^{p_1} \times \cdots \times L^{p_5} \rightarrow L^{p_6'}$ for any tuple $(p_1, \dots, p_6)$ such that \[ \frac{1}{p_1}+\cdots+ \frac{1}{p_6}=1,\]
\[ p_j>1 \text{ for }j=1,\dots,5,\]
and such that there exists $\alpha\in \Xi_{\Gamma'_2}$ such that 
\[ \frac{1}{p_j}< 1- \alpha_j.\]
 We are now interested in describing the full range obtained and show that it is equivalent to the conditions \eqref{calculXi_2} and \eqref{calculXi_3}. 
 \\ First notice that by definition of $\Xi_{\Gamma'_2}$, the Hölder tuples in Theorem \ref{th2} satisfy \eqref{calculXi_2}, \eqref{calculXi_3}. 
 \\ To prove that any Hölder tuple satisfying  \eqref{calculXi_2}, \eqref{calculXi_3} can be recovered from Theorem \ref{th2}, we proceed as before. First we compute the extreme points of $\Xi_{\Gamma'_2}$ and then the convex hull of the points $\tilde{\alpha}:=(1-\alpha_1, \dots, 1-\alpha_6)$ for $\alpha$ extreme point of  $\Xi_{\Gamma'_2}$. This will be enough to conclude, with a similar reasoning as in the proof of Proposition \ref{range1}, to the range of boundedness \eqref{calculXi_2} and \eqref{calculXi_3}.
\\ We first compute the extreme points of $\Xi_{\Gamma'_2}$. The degenerate directions for $\Gamma'_2$ are $(1,2), (1,3), (2,3)$ and $(4,5),(5,6), (4,6)$. Thus any $\alpha \in \Xi_{\Gamma'_2}$ satisfies 
\[ \left\{\begin{aligned} \alpha_1&= \theta_{1,4}+\theta_{1,5}+\theta_{1,6}, \\
\alpha_2&= \theta_{2,4}+\theta_{2,5}+\theta_{2,6},\\
\alpha_3&= \theta_{3,4}+\theta_{3,5}+\theta_{3,6}, \\
\alpha_4&= \theta_{1,4}+\theta_{2,4}+\theta_{3,4}, \\
\alpha_5&= \theta_{1,5}+\theta_{2,5}+\theta_{3,5}, \\
\alpha_6&= \theta_{1,6}+\theta_{2,6}+\theta_{3,6} \; . \\
\end{aligned} \right.\]
As we have said earlier, some boundary points from $\Gamma_{6,2}$ cannot be reached in $\Xi_{\Gamma'_2}$. Indeed, the point $(\frac{1}{2}, \frac{1}{2}, \frac{1}{2}, \frac{1}{2}, 0,0)$ and its permutations are boundary points of $\Xi_{6,2}$. However, if $(0, 0, \frac{1}{2}, \frac{1}{2}, \frac{1}{2}, \frac{1}{2})\in \bar{\Xi}_{\Gamma'_2}$, then there would exist a sequence $(\alpha^n)_n\in \Xi_{\Gamma'_2}^\mathbb{N}$ converging towards it. In particular we would have
\[ \alpha_6^n=\theta_{1,6}^n+\theta_{2,6}^n+\theta_{3,6}^n \longrightarrow 0 \]
and since each $\theta_{i,j} \geq 0$, we have that for $j=1,2,3$ 
$(\theta_{j,6}^n)_n$ converges to zero. Using the same argument, we get a similar result for any $\theta_{j,5}$, $j=1, 2,3$. 
\\ This implies since $\alpha^n_1= \theta_{1,4}+\theta_{1,5}+\theta_{1,6}\longrightarrow \frac{1}{2}$, that $(\theta_{1,4}^n)_n$ converges to $\frac{1}{2}$. Again with the same argument we get that $(\theta_{2,4}^n)_n$ and $(\theta_{3,4}^n)_n$ both converge to $\frac{1}{2}$. But this would mean that 
\[ \alpha_4^n=\theta_{1,4}^n +\theta_{2,4}^n +\theta_{3,4}^n \longrightarrow\frac{3}{2}\]
which contradicts the assumption that it converges to $\frac{1}{2}$. This proves that the point $(0, 0, \frac{1}{2}, \frac{1}{2}, \frac{1}{2}, \frac{1}{2})\notin \bar{\Xi}_{\Gamma'_2}$.
\\ By a similar reasoning, one can show that any boundary point with two zeros in either the first three coordinates or the last three does not belong to $\bar{\Xi}_{\Gamma'_2}$. 
\\ On the other hand, the point $(0, \frac{1}{2},\frac{1}{2},\frac{1}{2}, \frac{1}{2}, 0) \in \bar{\Xi}_{\Gamma'_2}$. Indeed, let any $\varepsilon>0$ and define 
\[ \theta_{1,4}=\theta_{1,5}=\theta_{1,6}=\theta_{2,6}=\theta_{3,6}=\varepsilon,\]
\[ \theta_{2,4}=\theta_{2,5}=\theta_{3,4}=\theta_{3,5}= \frac{1}{4}-\frac{5}{4}\varepsilon.\]
Then we get 
\[\alpha_1=\alpha_6=3\varepsilon,\]
\[ \alpha_2=\alpha_3=\alpha_4=\alpha_5=\frac{1}{2}-\frac{1}{4}\varepsilon.\]
Then $\alpha=(\alpha_1, \dots,\alpha_6)\in \Xi_{\Gamma'_2}$, for any $\varepsilon$ small enough. This proves that $(0, \frac{1}{2},\frac{1}{2},\frac{1}{2}, \frac{1}{2}, 0) \in \bar{\Xi}_{\Gamma'_2}$.
\\ With a similar reasoning we prove that the permutations of  $(0, \frac{1}{2},\frac{1}{2},\frac{1}{2}, \frac{1}{2}, 0)$ such that there is exactly one zero in the the three first coordinates belong to $\Xi_{\Gamma'_2}$.
\\ We now compute the convex hull $C$ of the points  $(1,\frac{1}{2},\frac{1}{2}, \frac{1}{2}, \frac{1}{2}, 1)$ and its permutations such that exactly one of the three first coordinates is equal to $1$. We claim that this convex hull is the set $B$ of $\beta \in \mathbb{R}^6$ such that 
\[\beta_{i_1}+\beta_{i_2} \leq \frac{3}{2},\]
\[ \beta_1 +\beta_2 +\beta_3=2,\]
\[ \beta_4 +\beta_5+ \beta_6=2,\]
for all $\{i_1, i_2 \}\subset \{1, 2,3\}$ or $\{i_1, i_2\} \subset\{4,5,6\}$.
Any $\beta$ in the convex hull $C$ satisfies the relations above. Conversely, we only need to prove that the extreme points of $B$ are exactly the permutations of $(1,\frac{1}{2},\frac{1}{2}, \frac{1}{2}, \frac{1}{2}, 1)$. Let $(\beta_1, \dots, \beta_6)$ be an extreme point $B$. By symmetry, we can assume that $\beta_1 \geq \beta_2 \geq \beta_3$ and $\beta_4 \geq \beta_5 \geq \beta_6$. Furthermore, if we assume that
\[ \beta_1 +\beta_2 <\frac{3}{2},\]
we get that $\beta':=(\beta_1,\beta_2+\varepsilon, \beta_3-\varepsilon, \beta_4, \beta_5, \beta_6)$ is also in $B$ if $\varepsilon$ is small enough. It is also the case for $\beta'':=(\beta_1,\beta_2-\varepsilon, \beta_3+\varepsilon, \beta_4, \beta_5, \beta_6)$
and therefore $\beta=\frac{1}{2}\beta'+\frac{1}{2}\beta''$ which contradicts the fact that it is an extreme point. Therefore $\beta_1+\beta_2=\frac{3}{2}$, which gives \[ \beta_3=\frac{1}{2}.\]
One can do a similar reasoning if we assume that 
\[ \beta_1<1\]
to conclude to the fact that $\beta_1=1$ and therefore $\beta_2=\frac{1}{2}$. 
By a similar argument, we can show that we also get 
\[\beta_4=1 \;\text{ and } \; \beta_5=\beta_6=\frac{1}{2}.\]
This proves that $C=B$. 
\\ This is enough to conclude in the same way as we did in the proof of Proposition \ref{range1}. 
\end{proof}
\end{example}
Examples \ref{ex1} and \ref{ex2} illustrate different behaviors of the boundedness range obtained from Theorem \ref{th2}. In the first case, there is only one degenerate direction and we show that it does not change the range of boundedness in comparison to the nondegenerate case. In the second case, however, the singularity has more degenerate directions and we get a boundedness range strictly included in the range obtained in the nondegenerate case. 
\section{A counter example in the case $k=1$}
In this section we construct a counterexample to a limit case of Theorems \ref{th1} and \ref{th2} when the number of nondegenerate directions is equal to two. To be more precise we extend a counter-example from \cite{Kesler2019} where the degenerate line is $\Gamma'=\left\{ (\xi,-\xi,0) \; |\; \xi \in \mathbb{R}\right\}$ in $\mathbb{R}^3$, to the more general case $\Gamma'=\left\{ (\xi,-\xi,0,\dots,0)\; |\;\xi \in \mathbb{R}\right\} \subset \mathbb{R}^n$. This shows that in any dimension, when the singularity is a line (i.e. $k=1$), the condition $n-s>2k$ is sharp. 
\\ We first begin by constructing an explicit counter example when $n=4$. Let $\Gamma'=\left\{(\xi,-\xi,0,0)\:|\: \xi \in \mathbb{R} \right\}$, degenerate in the two last directions. The idea is to construct a symbol $m$, sum of functions supported on Whitney cubes adapted to $\Gamma'$, which will fix the last two directions around $0$ and notice that even if these directions are fixed we can have too many Whitney cubes because the other directions aren't. This also explains why we leave out the degenerate directions in our proof of Theorem \ref{th1} as we encounter the same issue. We use the same cubes as in \cite{Kesler2019} while fixing the new direction. Since this direction is degenerate, this modification has little impact on the argument. One can see this as if we were projecting the singularity in the hyperplan $\left\{ \xi_3=0\right\}$. 
\\ Let's now dive into the details. Let $\rho \in \mathcal{C}^\infty([-\frac{1}{4},\frac{1}{4}])$ real, even such that $\hat{\rho}(0)>0$ and define $\phi=\rho \ast \rho$. Then $\phi \in  \mathcal{C}^\infty([-\frac{1}{2},\frac{1}{2}])$, $\phi$ is real, even, $\hat{\phi}(0)=\hat{\rho}(0)^2>0$ and $\hat{\phi}(x)=\hat{\rho}(x)^2 \geq 0$ for all $x \in \mathbb{R}$. Let $A\gg1$, then for $k\geq 8$, $m \in \mathbb{Z}$ and $-2^{k-8}<\lambda<2^{k-8}$, we define: 
\[ Q_{k,m,\lambda}^1=2^{-k}[2^km+\lambda-\frac{1}{2}, 2^km+\lambda +\frac{1}{2}],\]
\[  Q_{k,m,\lambda}^2=2^{-k}[-2^km-\lambda-\frac{1}{2}+A,- 2^km-\lambda +\frac{1}{2}+A],\]
\[ Q_{k}^3=2^{-k}[-\frac{1}{2},\frac{1}{2}].\]
Then the collection of cubes $Q_{k,m,\lambda}= Q_{k,m\lambda}^1\times Q_{m,k,\lambda}^2\times Q_k^3$ is similar to a collection of dyadic Whitney cubes, as in \eqref{eq:Whitneycubes}, relatively to $\left\{ (\xi,-\xi,0)\;| \; \xi \in \mathbb{R} \right\}$ with $C_0 \sim A$. We now consider the collection of cubes 
\[ \mathbb{Q}=\bigcup_{k\geq 8} \bigcup_{m \in \mathbb{Z}}\bigcup_{-2^{k-8}<\lambda< 2^{k-8}}Q_{k,m,\lambda}.\]
Notice that for a fixed $k$, we are looking at a collection of Whitney dyadic cubes that we shift with a large constant in the $m$ quantity and when $m$ is fixed that we shift by units in the $\lambda$ quantity. This in particular means our collection is not sparse. We now define 
\[ \eta_{k,m,\lambda}^1(x)=2^{-k}\hat{\phi}(2^{-k}x)e^{2\pi i(m+\lambda2^{-k})x},\]
\[ \eta_{k,m,\lambda}^2(x)=2^{-k}\hat{\phi}(2^{-k}x)e^{-2\pi i(m+\lambda2^{-k}-A2^{-k})x}e^{2i\pi A2^{-k}m}, \]
\[ \eta_k^3(x)=2^{-k}\hat{\phi}(2^{-k}x).\]
Notice that $\hat{\eta}_{k,m,\lambda}^j$ is supported on $Q_{k,m,\lambda}^j$ for $j=1,2$ and $\hat{\eta}_k^3$ is supported on $Q_k^3$. We now define our symbol $\mathfrak{m}\in \mathcal{M}_{\Gamma'}(\mathbb{R}^n)$ by:
\[ \mathfrak{m}(\xi_1,\xi_2,\xi_3)=\sum_{k\geq8} \underbrace{\sum_{|Q|=2^{-k}}\hat{\eta}^1_{k,m,\lambda}(\xi_1)\hat{\eta}^2_{k,m,\lambda}(\xi_2)\hat{\eta}_k^3(\xi_3)}_{:=\mathfrak{m}_k(\xi_1,\xi_2,\xi_3)} .\]
We claim that the operator $T_\mathfrak{m}$ defined by 
\[ T_{\mathfrak{m}}(f_1,f_2,f_3)(x)=\int_{\mathbb{R}^3}\mathfrak{m}(\xi_1,\xi_2,\xi_3)\hat{f}_1(\xi_1)\hat{f}_2(\xi_2) \hat{f}_3(\xi_3) e^{2i\pi x(\xi_1+\xi_2+\xi_3)}d\xi_1 d\xi_2 d\xi_3\]
is not bounded from any $L^{p_1} \times L^{p_2}\times L^{p_3} $ to $L^{p_4'}$ with $1<p_1,p_2,p_3,p_4'<\infty$ such that $\frac{1}{p_1}+\frac{1}{p_2}+\frac{1}{p_3}=\frac{1}{p_4'}$. To prove our claim we now need to find some good functions $f_1$, $f_2$, $f_3$. 
\\ Let $\delta =\frac{1}{100}$ and $\psi \in \mathcal{S}(\mathbb{R})$ such that:
\[ \mathbb{1}_{[-\frac{1}{2}+\delta, \frac{1}{2}-\delta]}\leq \hat{\psi} \leq \mathbb{1}_{[-\frac{1}{2}, \frac{1}{2}]} \]
and define for $N\gg1$, 
\[ f_1^N(x)=\sum_{1\leq n\leq N}\psi(x-n)e^{2i\pi nx},\]
\[ f_2^N(x)=\sum_{1\leq n\leq N}\psi(x-n)e^{-2i\pi nx},\]
\[ f_3^N(x)=\sum_{1\leq n\leq N} \psi(x-n).\]
We essentially chose these functions to localize our operator on the spatial interval $[1,N]$. Indeed notice that if $Q_{k,m,\lambda}^1 \cap [n_1-\frac{1}{2}, n_1+\frac{1}{2}] \neq \emptyset$, $Q_{k,m,\lambda}^2 \cap [n_2-\frac{1}{2}, n_2+\frac{1}{2}] \neq \emptyset$, then $n_1=m=-n_2$ and for all $k\geq C_{A}$, we have 
\[ Q_{k,m,\lambda}^1 \subset [m-\frac{1}{2}+\delta, m+\frac{1}{2}-\delta], \]
\[-Q^2_{k,m,\lambda}= 2^{-k}[2^km+\lambda-\frac{1}{2}-A,2^km+\lambda +\frac{1}{2}-A]\subset [m-\frac{1}{2}+\delta, m+\frac{1}{2}-\delta].\]
Fix now $k\geq 8$ and denote \[\mathbb{Q}^{k}:= \bigcup_{m\in \mathbb{Z}} \bigcup_{-2^{k-8}<\lambda<2^{k-8}}Q_{k,m,\lambda} \]
Then for a fixed $x\in \mathbb{R}$, we have: 
\begin{align*}
    T_{\mathfrak{m}_k}(f_1^N,f_2^N,f_3^N)(x)&=\int_{\mathbb{R}^3} \mathfrak{m}_k(\xi_1,\xi_2,\xi_3)e^{2i\pi x(\xi_1+\xi_2+\xi_3) }\hat{f_1}^N(\xi_1)\hat{f}_2^N(\xi_2) \hat{f}_3^N(\xi_3) d\xi_1 d\xi_2 d\xi_3 \\
    &= \sum_{Q_{k,m, \lambda} \in \mathbb{Q}^{k}} \prod_{j=1}^3 \int_{\mathbb{R}}\hat{\eta}_{k,m,\lambda}^j(\xi_j)\hat{f}_j^N(x)e^{2i \pi x\xi_j}d\xi_j \\
    &= \sum_{Q_{k,m, \lambda} \in \mathbb{Q}^{k}} \prod_{j=1}^3 \eta_{k,m,\lambda}^j \ast f_j^N(x) \\
    &=\left(\eta_k^3\ast f_3^N (x) \right) 
    \\ &\times\left(\sum_{\substack{m\in \mathbb{Z}\\ -2^{k-8}<\lambda<2^{k-8}}} \sum_{1\leq n_1,n_2\leq N} \eta_{k,m,\lambda}^1\ast (\psi(\cdot -n_1)e^{2i\pi n_1 \cdot}) (x) \times \eta_{k,m,\lambda}^2\ast (\psi(\cdot -n_2)e^{2i\pi n_2\cdot}) (x) \right)\\
    &=\left( \eta_k^3\ast f_3^N (x) \right) \times \left(  \sum_{\substack{1\leq m\leq N\\ -2^{k-8}<\lambda<2^{k-8}}}2^{-2k}\hat{\phi}((x-m)2^{-k})^2e^{2i\pi A 2^{-k}x} \right)\\
    &=\left(2^{-k} \sum_{1\leq m\leq N }\hat{\phi}(2^{-k}(x-m))\right) \times \left(e^{2i\pi A2^{-k}x}  (2^{k-7}-1)2^{-2k}\sum_{1\leq m\leq N} \hat{\phi}((x-m)2^{-k})^2\right),
\end{align*}
where $2^{k-7}-1 = \sharp \left\{\lambda \;| \; -2^{k-8}< \lambda  < 2^{k-8} \right\}$. 
We now want to find a lower bound for $|T_{\mathfrak{m}_k}(f_1^N,f_2^N,f_3^N)(x)|$. To do so we will first give a lower bound for the second bracket which comes from the counterexample from \cite{Kesler2019}. Since $\hat{\phi}(0)>0$, there exists an $\varepsilon>0$ such that $\hat{\phi}(x)\geq c>0$ for all $x\in [-\varepsilon,\varepsilon]$.Therefore for $m=1,\dots, N $, we have: 
\[ \hat{\phi}((x-m)2^{-k})^2\gtrsim \mathbb{1}_{[m-2^k\varepsilon,m+2^k\varepsilon]}(x).\]
Summing over $m$ we get: 
\[ \sum_{m=1}^N\hat{\phi}((x-m)2^{-k})^2\gtrsim \sum_{m=1}^N\mathbb{1}_{[m-2^k\varepsilon,m+2^k\varepsilon]}(x)\gtrsim \sum_{m=1}^N 2^k\mathbb{1}_{[m,m+1]}(x)\gtrsim_\varepsilon 2^k \mathbb{1}_{[1,N]}(x)\]
where the two last inequalities come from the fact that when $2^k \varepsilon \leq \frac{N}{2}$, and $x\in[m,m+1]$, then $x$ belongs to $\sim 2^k\varepsilon$ intervals of the kind $[m+j-2^k\varepsilon, m+j+2^k\varepsilon]$ for some $j$. 
\\ Now for the first bracket $ \displaystyle 2^{-k} \sum_{m=1}^N \hat{\phi}((x-m)2^{-k})$ we use the same reasoning using the property that $\hat{\phi}=\hat{\rho}^2\geq 0$. We get that $ \displaystyle |  2^{-k} \sum_{m=1}^N \hat{\phi}((x-m)2^{-k})| \gtrsim 2^{-k+k}\mathbb{1}_{[1,N]}(x)\gtrsim \mathbb{1}_{[1,N]}(x)$. This gives us the lower bound 
\[ |T_{\mathfrak{m}_k}(f_1^N,f_2^N,f_3^N)(x)|\gtrsim \mathbb{1}_{[1,N]}(x)\]
for $C_A\leq k \lesssim \log(N)$. Last but not least, if we take $A=100$, we get 
\[ \text{supp}(\mathcal{F}(T_{\mathfrak{m}_k}(f_1^N,f_2^N,f_3^N))) \subset [98\times 2^{-k},102 \times 2^{-k}]. \]
Using the Littlewood-Paley square function, we get 
\begin{align*}
    \| T_{\mathfrak{m}}(f_1^N,f_2^N,f_3^N)\|_{p_4'}&=\|\sum_{k\geq 8} T_{\mathfrak{m}_k}(f_1^N,f_2^N,f_3^N)\|_{p_4'} \\
    &\gtrsim \left\|\left( \sum_{k\geq 8} (T_{\mathfrak{m}_k}(f_1^N,f_2^N,f_3^N)^2\right)^{1/2} \right\|_{p_4'}\\
    &\gtrsim \left\|\left(\sum_{C_A\leq N\lesssim\log(N)} |T_{\mathfrak{m}_k}(f_1^N,f_2^N,f_3^N)|^2 \right)^{1/2}\right\|_{p_4'} \\
    &\gtrsim \log(N)^{1/2} N^{1/p_4'} \\
    &\gtrsim \log(N)^{1/2}N^{1/p_1 +1/p_2 +1/p_3}.
\end{align*}
Notice now that $\|f_j^N\|_{p_j}\lesssim N^{1/p_j}$ which is enough to conclude that $T_\mathfrak{m}$ is not bounded from $L^{p_1}\times L^{p_2} \times L^{p_3}$ to $L^{p_4'}$ for all $1<p_1,p_2,p_3,p_4'<\infty$ such that $\frac{1}{p_1}+\frac{1}{p_2}+\frac{1}{p_3}=\frac{1}{p_4'}$. 
\\ We now claim that this counterexample can be extended to the general case $\Gamma'=\left\{ (\xi,-\xi,0,\dots, 0) \;| \;\xi \in \mathbb{R}\right\}\subset \mathbb{R}^n$. It suffices to look at the collection \[ \mathbb{Q}=\bigcup_{k\geq 8}\bigcup_{m\in \mathbb{Z}}\bigcup_{-2^{k-8}<\lambda<2^{k-8}} Q_{k,m,\lambda}^1 \times Q_{k,m,\lambda}^2 \underbrace{\times Q_k^3 \times \dots \times Q_k^3}_{(Q_k^3)^{n-2}},\]
and define a new symbol $\mathfrak{m}$ by:
\[ \mathfrak{m}(\xi_1,\dots,\xi_{n-1})=\sum_{k\geq 8}\sum_{|Q|=2^{-k}}\hat{\eta}_{k,m,\lambda}^1(\xi_1)\hat{\eta}_{k,m,\lambda}^2(\xi_2)\prod_{j=3}^{n-1} \hat{\eta}_k^3(\xi_j).\]
Define the same functions $f_1^N$, $f_2^N$, $f_3^N$ as before and for $j\geq 4$, $f_j^N:=f_3^N$ and by the same reasoning, one can conclude that this operator $T_\mathfrak{m}$ is not bounded from $L^{p_1}\times \cdots \times L^{p_{n-1}}$ to $L^{p_n'}$ with $1<p_1,\dots,p_{n-1},p_n'<\infty$ and $\frac{1}{p_1}+\cdots +\frac{1}{p_{n-1}}=\frac{1}{p_n'}$. 
\bibliographystyle{plain}
\bibliography{refs}
\end{document}